\DeclareMathOperator{\ord}{ord}
\DeclareMathOperator{\Stab}{Stab}
\DeclareMathOperator{\Len}{Len}
\DeclareMathOperator{\perc}{\%}
\DeclareMathOperator{\lv}{\lvert}
\DeclareMathOperator{\rv}{\rvert}
\newcommand{\ZZ}{\mathbb{Z}}      
\newcommand{\RR}{\mathbb{R}}      
\newcommand{\mfT}{\mathfrak{T}}
\newcommand{\la}{\langle}    
\newcommand{\ra}{\rangle}
\newcommand{\cF}{\mathcal{F}}
\newcommand{\Mod}[1]{\ (\mathrm{mod}\ #1)}
\newtheorem{theorem}{Theorem}[section]
\newtheorem{lemma}[theorem]{Lemma}
\newtheorem{proposition}[theorem]{Proposition}
\newtheorem{corollary}[theorem]{Corollary}
\theoremstyle{definition}
\theoremstyle{remark}
\newtheorem{remark}[theorem]{Remark}
\newtheorem{definition}[theorem]{Definition}
\title{Sumsets of sequences in abelian groups and flags in field extensions}
\author{Sameera Vemulapalli}
\date{\today}
\begin{document}

\maketitle
\pagenumbering{arabic}

\begin{abstract}
For a finite abelian group $G$ with subsets $A$ and $B$, the sumset $AB$ is $\{ab \mid a\in A, b \in B\}$. A fundamental problem in additive combinatorics is to find a lower bound for the cardinality of $AB$ in terms of the cardinalities of $A$ and $B$. This article addresses the analogous problem for sequences in abelian groups and flags in field extensions. For a positive integer $n$, let $[n]$ denote the set $\{0,\dots,n-1\}$. To a finite abelian group $G$ of cardinality $n$ and an ordering  $G = \{1=v_0,\dots,v_{n-1}\}$, associate the function $T \colon [n] \times [n] \rightarrow [n]$ defined by
\[
	T(i,j) = \min\big\{k \in [n] \mid \{v_0,\dots,v_i\}\{v_0,\dots,v_j\} \subseteq \{v_0,\dots,v_k\}\big\}.
\]
Under the natural partial ordering, what functions $T$ are minimal as $\{1=v_0,\dots,v_{n-1}\}$ ranges across orderings of finite abelian groups of cardinality $n$? We also ask the analogous question for degree $n$ field extensions.

We explicitly classify all minimal $T$ when $n < 18$, $n$ is a prime power, or $n$ is a product of $2$ distinct primes. When $n$ is not as above, we explicitly construct orderings of abelian groups whose associated function $T$ is not contained in the above classification. We also associate to orderings a polyhedron encoding the data of $T$.
\end{abstract}

\section{Introduction}
Let $G$ be an abelian group, written multiplicatively, and let $A,B\subseteq G$ be finite subsets. The sumset $AB$ is $\{ab \; \mid \; a\in A, b \in B\}$. A fundamental problem in additive combinatorics is to find a lower bound for the cardinality of $AB$ in terms of the cardinalities of $A$ and $B$. For a finite set $S$, let $\lvert S \rvert$ denote its cardinality. Let $\Stab(AB) = \{g \in G \mid gAB = AB\}$. In 1961, Kneser obtained the following lower bound.

\begin{theorem}[Kneser \cite{kneser}]
\label{kneserorig}
For all abelian groups $G$ and finite subsets $A,B\subseteq G$, we have
\[
	\lvert AB \rvert \geq \lvert A \rvert + \lvert B \rvert - \lvert \Stab(AB) \rvert.
\]
\end{theorem}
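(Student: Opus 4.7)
My plan is induction on $|B|$ using the Dyson $e$-transform. For $e \in G$, set $A_e := A \cup eB$ and $B_e := B \cap e^{-1}A$; a direct check yields $A_e B_e \subseteq AB$, $|A_e| + |B_e| = |A| + |B|$, $A \subseteq A_e$, and $B_e \subseteq B$. The base case $|B|=1$ is immediate since $|AB| = |A|$ and $|\Stab(AB)| \geq 1$.

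First I would reduce to the case $H := \Stab(AB) = \{1\}$: replacing $(A,B)$ by $(AH, BH)$ leaves $AB$ unchanged (since $H$ stabilizes $AB$), and passing to the quotient $G/H$ makes the stabilizer trivial while dividing all cardinalities by $|H|$. An inequality $|\overline{AB}| \geq |\overline A| + |\overline B| - 1$ in the quotient then lifts to the stronger statement $|AB| \geq |AH| + |BH| - |H| \geq |A| + |B| - |H|$, which is the desired bound.

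Under this reduction I iterate the Dyson transform, choosing $e \in AB^{-1}$ at each step so that $|B_e| < |B|$, until reaching a terminal pair $(A^*, B^*)$ with $|A^*| + |B^*| = |A| + |B|$, $A^* B^* \subseteq AB$, and such that no further transform strictly shrinks $B^*$. If $|B^*| = 1$ then $|AB| \geq |A^* B^*| = |A^*| = |A| + |B| - 1$ and we are done. Otherwise the terminal condition $eB^* \subseteq A^*$ for every $e \in A^*(B^*)^{-1}$ rearranges to $B^*(B^*)^{-1} \subseteq \Stab(A^*)$; letting $K$ be the subgroup generated, we obtain $A^* B^* = A^* b_0$ for any $b_0 \in B^*$ and $K \subseteq \Stab(A^*B^*)$.

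The main obstacle will be the final link: propagating $K \subseteq \Stab(A^*B^*)$ back to $K \subseteq \Stab(AB)$, which by our trivial-stabilizer reduction forces $K = \{1\}$ and hence $|B^*| = 1$, eliminating the remaining case. Although $\Stab$ is not monotone under set containment in general, I expect that a careful induction along the sequence of Dyson transforms -- exploiting that $h A_e B_e = A_e B_e \subseteq AB$ for $h \in \Stab(A_e B_e)$, together with the symmetry interchanging the roles of $A$ and $B$ under $e \mapsto e^{-1}$ -- propagates the stabilizer containment one transform at a time. This step-by-step stabilizer bookkeeping is the technical heart of the proof.
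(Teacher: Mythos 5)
First, note that the paper does not prove this statement at all: it is quoted as Kneser's theorem from the literature, so there is no internal proof to compare against; your attempt has to stand on its own.

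Your setup is correct and standard: the stated properties of the Dyson transform ($A_eB_e\subseteq AB$, $|A_e|+|B_e|=|A|+|B|$, monotonicity), the base case, the reduction to $\Stab(AB)=\{1\}$ by passing to $G/H$, and the analysis of a terminal pair (namely $(B^*)^{-1}B^*\subseteq\Stab(A^*)$, $A^*B^*=A^*b_0$, $K\subseteq\Stab(A^*B^*)$) are all fine. But the proof is genuinely incomplete at exactly the point you flag, and that point is not ``technical bookkeeping'' --- it is the entire content of Kneser's theorem. What your argument actually delivers is only the weaker statement: there exists \emph{some} subgroup $K$ with $|AB|\ge |A|+|B|-|K|$, where $K$ stabilizes the subset $A^*B^*\subseteq AB$ but need not stabilize $AB$ itself. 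The proposed fix --- propagating a containment $\Stab(A_eB_e)\subseteq\Stab(AB)$ backwards one transform at a time --- has no basis: $A_eB_e$ is in general a proper subset of $AB$, stabilizers are not monotone under inclusion, and nothing in the transform controls the stabilizer of the shrunken sumset. In particular, under your reduction $\Stab(AB)=\{1\}$ the terminal $K$ can perfectly well be nontrivial, and then your chain of inequalities stops at $|AB|\ge |A|+|B|-|B^*|$, short of the claim.

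Every known proof needs a substantive extra idea precisely here: Kneser's original double induction, or (in the treatments of Mann, Nathanson, Tao--Vu, DeVos) passing to the quotient $G/K$ by the terminal subgroup, applying an inductive hypothesis there (induction on $|G|$ or on $|B|$), and then carefully counting how $AB$ meets the $K$-cosets --- in effect accounting for the ``holes'' in partially filled cosets to recover the full stabilizer statement. None of that is present or sketched in your proposal, so as written it proves only the weak bound above, not the theorem.
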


If $G$ is a cyclic group of prime order $p$, then \cref{kneserorig} specializes to the Cauchy--Davenport theorem, which states that every pair of nonempty subsets $A, B\subseteq G$ satisfies $\lvert AB \rvert \geq \min\{p, \lvert A \rvert + \lvert B \rvert -1\}$. Bachoc, Serra, and Z\'emor generalized \cref{kneserorig} to field extensions. 

\begin{theorem}[Bachoc, Serra, Z\'emor \cite{zemor}, Theorem 2]
\label{zemorthm}
For all field extensions $L/K$ and nonempty $K$-subvector spaces $A,B \subseteq L$ of positive finite dimension, we have
\[
	\dim_KAB \geq \dim_KA + \dim_KB - \dim_K\Stab(AB).
\]
\end{theorem}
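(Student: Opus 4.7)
The plan is to first verify that $H := \Stab(AB) = \{x \in L : x \cdot AB \subseteq AB\}$ is a $K$-subfield of $L$. Closure under addition and multiplication and containment of $K$ are immediate from the definition; for inverses, note that for nonzero $x \in H$, multiplication by $x$ is a $K$-linear injection $AB \to AB$, hence a bijection since $AB$ is finite-dimensional, so $x^{-1} \in H$ as well. I would then replace $A$ by $HA$ and $B$ by $HB$: because $H \cdot AB = AB$ we have $(HA)(HB) = AB$, while $\dim_K HA \geq \dim_K A$ and $\dim_K HB \geq \dim_K B$, so it suffices to prove the inequality under the extra assumption that $A$ and $B$ are $H$-subspaces. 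Dividing through by $\dim_K H$ then reduces the theorem to the vector-space Cauchy--Davenport claim
\[
    \dim_H AB \geq \dim_H A + \dim_H B - 1
\]
under the hypothesis $\Stab_L(AB) = H$.

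To prove this reduced claim, I would try to construct an $H$-basis $b_1, \ldots, b_m$ of $B$ (with $m = \dim_H B$) such that the flag $B_i := \Span_H(b_1, \ldots, b_i)$ induces a strictly increasing chain $A B_1 \subsetneq A B_2 \subsetneq \cdots \subsetneq A B_m = AB$. Given such a basis, since $\dim_H A B_1 = \dim_H A$ and every subsequent inclusion contributes at least one new dimension, I would immediately obtain $\dim_H AB \geq \dim_H A + (m - 1) = \dim_H A + \dim_H B - 1$. The basis would be built greedily: having chosen $b_1, \ldots, b_{i-1}$, I would pick $b_i \in B \setminus B_{i-1}$ so that $A b_i \not\subseteq A B_{i-1}$.

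The hard part will be showing that this greedy procedure never gets stuck. If at some step $i$ every $b \in B \setminus B_{i-1}$ satisfies $A b \subseteq A B_{i-1}$, then $AB = A B_{i-1}$ despite $B_{i-1} \subsetneq B$; the plan is then to extract from this degeneracy some element $x \in L \setminus H$ with $x \cdot AB \subseteq AB$, contradicting $\Stab_L(AB) = H$. This is the vector-space analog of Kneser's $e$-transform and will be the technical heart of the proof: one must argue---probably by induction on $\dim_H A + \dim_H B$ together with exchange arguments on $A$ and $B$ simultaneously, or by descending to a minimal counterexample and producing the stabilizing element as a ratio of coefficients appearing in the relations $bA \subseteq AB_{i-1}$---that this degenerate configuration forces a strictly larger stabilizer than $H$.
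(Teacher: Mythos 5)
This theorem is not proved in the paper at all: it is quoted from Bachoc--Serra--Z\'emor and used as a black box, so your proposal has to be measured against the known proofs of the linear Kneser theorem rather than against anything in this article. The parts of your plan that are carried out are correct but are also the routine parts: $\Stab(AB)$ is indeed a subfield $H$ of finite $K$-dimension (injectivity of multiplication on the finite-dimensional space $AB$ gives inverses), and replacing $(A,B)$ by $(HA,HB)$ and dividing by $\dim_K H$ legitimately reduces the statement to $\dim_H AB \geq \dim_H A + \dim_H B - 1$ when $\Stab(AB)=H$. The greedy chain $AB_1 \subsetneq AB_2 \subsetneq \cdots$ then gives the bound \emph{provided the greedy step never fails}.

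The gap is that the remaining step --- ``if every $b \in B \setminus B_{i-1}$ satisfies $Ab \subseteq AB_{i-1}$, then one can extract $x \notin H$ with $xAB \subseteq AB$'' --- is exactly the content of the theorem, not a lemma on the way to it: its contrapositive is the inequality you are trying to prove, and you offer no actual mechanism for producing the stabilizing element beyond ``induction,'' ``minimal counterexample,'' or ``a ratio of coefficients,'' none of which is developed. There is also strong evidence that this step cannot be closed by a naive transplantation of Kneser's $e$-transform: the proofs in the literature are of a different nature (Hou--Leung--Xiang use an exterior-algebra/derivative argument, Bachoc--Serra--Z\'emor use Galois-theoretic arguments), both need a separability hypothesis, and the completely general case of the statement as quoted here (arbitrary field extensions) is not known to follow from any elementary transform argument. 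So an argument of the simplicity you sketch would already be a significant advance; as written, the proposal proves only the easy reduction and defers the entire difficulty to an unproven assertion.
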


In this article, we instead study the sumsets of sequences in abelian groups and flags in field extensions.

\subsection{Setup}
For a positive integer $n$, let $[n]$ denote the set $\{0,\dots,n-1\}$.
\subsubsection{Flags of abelian groups}
Let $G$ be any finite abelian group, written multiplicatively, of cardinality $n$. A \emph{flag of $G$} is an indexed set $\cF = \{F_i\}_{i \in [n]}$ of subsets of $G$ such that
\[
	\{1\} = F_0 \subset F_1 \subset \dots \subset F_{n-1} = G
\]
and $\lvert F_i \rvert = i + 1$ for all $i \in [n]$. 

\subsubsection{Flags of field extensions}
Let $L/K$ be any degree $n$ field extension. A \emph{flag of $L$ over $K$} is an indexed set $\cF = \{F_i\}_{i \in [n]}$ of $K$-subvector spaces of $L$ such that
\[
	K = F_0 \subset F_1 \subset \dots \subset F_{n-1} = L
\]
and $\dim_K F_i = i + 1$ for all $i \in [n]$. 

\subsubsection{The function $T_{\cF}$}
Let $\cF = \{F_i\}_{i \in [n]}$ be a flag of an abelian group or a field extension. 
Associate to $\cF$ the function 
\begin{align*}
  T_{\cF} \colon [n]\times[n] & \longrightarrow [n] \\
  (i\;\;,\;\;j ) \; & \longmapsto \min\{k \in [n] \mid F_iF_j \subseteq F_k\}.
\end{align*}

We call the associated function $T_{\cF}$ the \emph{flag type} of $\cF$. Which functions $T_{\cF}$ arise from groups and field extensions? 

The function $T_{\cF}$ satisfies the following conditions: for all $i,j \in [n]$, we have
\begin{enumerate}
\item $T_{\cF}(i,j) = T(j,i)$;
\item $T_{\cF}(i,0) = i$;
\item and if $i < n-1$, we have $T_{\cF}(i,j) \leq T_{\cF}(i+1,j)$.
\end{enumerate}

Motivated by the functions $T_{\cF}$, we make the following definition.

\begin{restatable}{definition}{flagtypedefn}
\label{flagtypedefn}
A \emph{flag type of degree $n$} is a function $T \colon [n] \times [n] \rightarrow [n]$ such that for any $i,j \in [n]$, we have:
\begin{enumerate}
\item $T(i,j) = T(j,i)$;
\item $T(i,0) = i$;
\item and $T(i,j) \leq T(i+1,j)$ if $i < n-1$.
\end{enumerate}
\end{restatable}

\begin{definition}
We say a flag type is \emph{realizable} if it arises from a flag of an abelian group or field extension. 
\end{definition}

In this article, let $T$ denote a flag type. Is every flag type realizable? The answer is no: for any integers $i \geq 0$ and $j > 0$, let $(i \perc j)$ denote the remainder when dividing $i$ by $j$. \cref{overflowlemma} implies that if $T(i,j) < i + j$, then there exists an integer $1 < k < n$ such that $k \mid n$ and
\[
	(i \perc k) + (j \perc k) \neq (i + j) \perc k.
\]
There are also other constraints on flag types; see \cref{unrealizableflagtype} for more information.

 Morever, say $T$ is \emph{realizable for groups} if there exists a finite abelian group with a flag realizing $T$. Similarly, say $T$ is \emph{realizable for field extensions} if there exists a field extension with a flag realizing $T$. Every flag type that is realizable for groups is realizable for fields (\cref{groupfieldlemma}).

In this article, we aim to describe the set of realizable flag types. 

\subsection{Some fundamental flag types}
We now construct some flag types that play a fundamental role in what follows. 

\begin{definition}
\label{towerTypeDefn}
A \emph{tower type} is a $t$-tuple of integers $\mfT = (n_1,\dots,n_t) \in \ZZ_{>1}^t$ for some $t \geq 1$. We say $t$ is the \emph{length} of the tower type and $\prod_{i = 1}^tn_i$ is the \emph{degree} of the tower type.
\end{definition}

In this article, $\mfT$ will denote a tower type of length $t$ and degree $n$. Mixed radix notation will be useful to express conditions concisely.
\begin{definition}
\label{mixedradixdefn}
Choose a tower type $\mfT = (n_1,\dots,n_t)$ and $i \in [n]$. Writing $i$ in \emph{mixed radix notation with respect to $\mfT$} means writing
\[
	i = i_1 + i_2 n_1 + i_3 (n_1n_2) + \dots + i_t(n_1\dots n_{t-1})
\]
where $i_s$ is an integer such that $0 \leq i_s < n_s$ for $1\leq s \leq t$. Note that the integers $i_s$ are uniquely determined.
\end{definition}

Fix a tower type $\mfT = (n_1,\dots,n_t)$. For any positive integer $i$, let $C_i$ denote the cyclic group of order $i$, written multiplicatively. Define the finite abelian group $G = C_{n_1} \times \dots \times C_{n_t}$.

For $1 \leq i \leq t$, let $e_i \in G$ be a generator of the $i$-th component $C_{n_i}$. Construct a sequence $\{v_0,\dots,v_{n-1}\}$ elements of $G$ as follows. Write $i = i_1 + i_2 (n_1) + i_3 (n_1 n_2) + \dots + i_t(n_1\dots n_{t-1})$ in mixed radix notation with respect to $(n_1,\dots,n_t)$ and set $v_i \coloneqq e_1^{i_1}\dots e_t^{i_t}$.

The sequence $\{v_0,\dots,v_{n-1}\}$ is essentially lexicographic; it is
\[
	\bigg \{ 1, e_1, e_1^2,\dots,e_1^{n_1-1}, e_2, e_1e_2, e_1^2e_2,\dots,\prod_{s=1}^{t}e_s^{n_s-1}  \bigg \}.
\]

Define the flag $\cF(\mfT) = \{F_i\}_{i \in [n]}$ by $F_i = \{v_0,\dots,v_i\}$.

\begin{definition}
Let $T(\mfT)$ be the flag type of $\cF(\mfT)$.
\end{definition}

We may describe the flag types $T(n_1,\dots,n_t)$ even more explicitly; see \cref{explicitflagtype}. The flag types $T(n_1,\dots,n_t)$ arise from flags of abelian groups. We will show that, in an appropriate sense, they are often \emph{minimal} among various subsets of realizable flag types.

\subsection{Minimality and realizability of $T(\mfT)$}
Say a flag type $T$ is \emph{realizable for a finite abelian group $G$} if there exists a flag $\cF$ of $G$ such that $T = T_{\cF}$. The following theorem is proved in \cref{realizabilitysection}.
\begin{restatable}{theorem}{realizabilityforgroupsthm}
\label{realizabilityforgroupsthm}
For any finite abelian group $G$ and tower type $\mfT = (n_1,\dots,n_t)$, the flag type $T(\mfT)$ is realizable for $G$ if and only if $G$ has a filtration
\[
	\{1\} = G_0 \subset G_1 \subset \dots \subset G_t = G
\]
such that $G_i/G_{i-1}$ is cyclic of order $n_i$ for all $1\leq i \leq t$. 
\end{restatable}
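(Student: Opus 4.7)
The key lever is that in the standard lex model of $C_{n_1}\times\cdots\times C_{n_t}$, the prefix $\{v_0,\dots,v_{n_1\cdots n_s - 1}\}$ is the subgroup $C_{n_1}\times\cdots\times C_{n_s}$, so
\[
T(\mfT)(n_1\cdots n_s - 1,\, n_1\cdots n_s - 1) = n_1\cdots n_s - 1.
\]
This identity lets one recover the desired filtration from any flag realizing $T(\mfT)$.

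For the ``if'' direction, I induct on $t$. Given the filtration $\{1\} = G_0 \subset \cdots \subset G_t = G$, choose $e_s \in G_s$ whose image generates $G_s/G_{s-1}$, so each $g \in G$ has a unique normal form $e_1^{i_1}\cdots e_t^{i_t}$ with $0 \leq i_s < n_s$. Set $v_i = e_1^{i_1}\cdots e_t^{i_t}$ using the mixed radix digits of $i$ with respect to $\mfT$, and $F_i = \{v_0,\dots,v_i\}$. Applying the inductive hypothesis to $G_{t-1}$ produces a flag $\cF'$ on $G_{t-1}$ realizing $T((n_1,\dots,n_{t-1}))$, which is precisely the bottom portion of $\cF$. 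For $i = km + j$ and $i' = k'm + j'$ with $m = n_1\cdots n_{t-1}$ and $0 \leq j, j' < m$, expand
\[
F_i F_{i'} = \bigcup_{c=0}^{k+k'-1} e_t^c G_{t-1} \;\cup\; e_t^{k+k'}(F_j' F_{j'}')
\]
using $G_{t-1} F_j' = G_{t-1}$. When $k + k' < n_t$ the cosets $e_t^c G_{t-1}$ are distinct and the top lex index is controlled by $T((n_1,\dots,n_{t-1}))(j,j')$ via induction; when $k + k' \geq n_t$ the cosets wrap around to cover $G$. An identical calculation in the lex model shows the resulting $T_\cF$ matches $T(\mfT)$.

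For the ``only if'' direction, suppose $T_\cF = T(\mfT)$ on $G$. The identity above yields $F_{n_1\cdots n_s - 1}\cdot F_{n_1\cdots n_s - 1} \subseteq F_{n_1\cdots n_s - 1}$; together with $1 \in F_i$ and finiteness, this makes $G_s := F_{n_1\cdots n_s - 1}$ a subgroup, producing the chain with $|G_s/G_{s-1}| = n_s$. To show each quotient is cyclic, I induct on $t$. The sub-flag $\{F_i\}_{0 \leq i \leq m - 1}$ on $G_{t-1} = F_{m-1}$ (with $m = n_1\cdots n_{t-1}$) realizes $T((n_1,\dots,n_{t-1}))$, so by induction $G_{t-1}$ already has a filtration with cyclic quotients of the correct orders. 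It remains to prove $G/G_{t-1}$ is cyclic of order $n_t$. Setting $H := G_{t-1}$ and $w := v_m$ (which lies outside $H$), my plan is to prove by induction on $k$, for $0 \leq k \leq n_t - 1$, that $v_{km + r} \in w^k H$ for all $0 \leq r < m$ and that $F_{(k+1)m - 1} = H \cup wH \cup \cdots \cup w^k H$ with these cosets distinct. The inductive step uses values of $T(\mfT)$ at indices such as $(m-1, (k+1)m - 1)$ (to force $F_{(k+1)m-1}$ to be a union of $H$-cosets) and $(m, km)$ (to force the new coset added at step $k \to k+1$ to be exactly $w^{k+1} H$). This proves $H, wH, \dots, w^{n_t - 1}H$ are $n_t$ distinct cosets that exhaust $G/H$, making $G/H$ cyclic of order $n_t$ generated by $\bar w$.

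The main obstacle is the inductive bookkeeping in the reverse direction: tracking, from the values of $T(\mfT)$, precisely which coset of $H$ each newly added element $v_{km+r}$ lies in, and in particular ruling out the possibility that $v_{(k+1)m}$ lands in an already-seen coset $w^\ell H$ with $\ell \leq k$. The crucial inputs are the pair of identities $T(\mfT)(m-1, (k+1)m-1) = (k+1)m-1$ (giving that $F_{(k+1)m-1}$ is closed under left multiplication by $H$) and $T(\mfT)(m, km) = (k+1)m$ (giving that $v_{(k+1)m} \in F_m F_{km}$), which together pin down the coset structure.
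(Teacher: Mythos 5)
Your proposal is correct, and its skeleton matches the paper's: the forward direction is the same lexicographic construction from the filtration (the paper merely asserts that the resulting flag has type $T(\mfT)$, whereas you verify it by induction on $t$ via the coset decomposition of $F_iF_{i'}$), and the reverse direction starts identically, using $T(\mfT)(n_1\cdots n_s-1,\,n_1\cdots n_s-1)=n_1\cdots n_s-1$ to make each $F_{n_1\cdots n_s-1}$ a subgroup $G_s$ of the right order. Where you genuinely diverge is the cyclicity of the quotients. The paper treats all levels $s$ at once inside the full flag: since the additions $n_1\cdots n_{s-1}+(j-1)n_1\cdots n_{s-1}$ do not overflow modulo $\mfT$, these pairs are corners (\cref{explicitflagtype}), and \cref{cornerreadinglemma} gives the exact identities $v_{jn_1\cdots n_{s-1}}=v_{n_1\cdots n_{s-1}}\,v_{(j-1)n_1\cdots n_{s-1}}$, hence $v_{jn_1\cdots n_{s-1}}=w_s^{\,j}$ and $w_s$ generates $G_s/G_{s-1}$. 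You instead induct on $t$, restrict the flag to $G_{t-1}$ (legitimate, since $T(\mfT)(i,j)=T((n_1,\dots,n_{t-1}))(i,j)$ for $i,j<m$), and handle only the top quotient by coset bookkeeping: $T(\mfT)(m-1,(k+1)m-1)=(k+1)m-1$ forces $F_{(k+1)m-1}$ to be a union of $H$-cosets, and $T(\mfT)(m,km)=(k+1)m$ (no overflow in the top digit for $k\le n_t-2$) forces $v_{(k+1)m}\in F_mF_{km}\subseteq H\cup wH\cup\cdots\cup w^{k+1}H$ while $v_{(k+1)m}\notin F_{(k+1)m-1}=H\cup\cdots\cup w^kH$, so the newly added coset is exactly $w^{k+1}H$ and is distinct from the earlier ones; this closes your induction and the argument is complete. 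The paper's route buys brevity and sharper information (exact equalities $v_{jm_s}=w_s^{\,j}$ at every level simultaneously, with no induction on $t$) at the price of invoking the corner machinery; your route is more self-contained, needing only the two displayed values of $T(\mfT)$ plus cardinality counts, and settles for coset membership, which is all the statement requires.
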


For a field $K$, say $T$ is \emph{realizable over a field $K$} if there exists a field extension of $K$ with a flag $\cF$ such that $T = T_{\cF}$. Say $T$ is \emph{realizable for a field extension $L/K$} if there exists a flag $\cF$ of $L$ over $K$ such that $T = T_{\cF}$.  

The following theorem is proved in \cref{realizabilitysection}.
\begin{restatable}{theorem}{realizabilityforfieldsthm}
\label{realizabilityforfieldsthm}
For any degree $n$ field extension $L/K$ and tower type $\mfT = (n_1,\dots,n_t)$, the flag type $T(\mfT)$ is realizable for $L/K$ if and only if $L/K$ has a tower of subfields 
\[
	K = L_0 \subset L_1 \subset \dots \subset L_t = L
\]
where $n_i = [L_i \colon L_{i-1}]$ and the field extension $L_i/L_{i-1}$ is simple for all $1\leq i\leq t$.
\end{restatable}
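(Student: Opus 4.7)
The plan is to treat the two directions separately. For $(\Leftarrow)$ I would construct a flag of $L/K$ directly from the given tower, modeled on the group construction of $\cF(\mfT)$; for $(\Rightarrow)$ I would extract the tower from a flag realizing $T(\mfT)$ by defining $L_s := F_{N_s - 1}$, where $N_s := n_1 \cdots n_s$, and verifying the subfield, degree, and simplicity conditions by induction on $s$.

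For $(\Leftarrow)$, choose primitive elements $\alpha_s$ with $L_s = L_{s-1}(\alpha_s)$. The monomials $\{\alpha_1^{j_1}\cdots\alpha_t^{j_t} : 0 \leq j_s < n_s\}$ form a $K$-basis of $L$; order them lexicographically so that the $i$-th monomial has exponent vector equal to the mixed radix expansion of $i$, and let $F_i$ be the $K$-span of the first $i+1$ monomials. I would verify $T_\cF = T(\mfT)$ either by induction on $t$ or via the explicit description of $T(\mfT)$ referenced in \cref{explicitflagtype}. The structural reason this works is that the reduction $\alpha_s^{n_s} \in L_{s-1}\langle 1, \alpha_s, \ldots, \alpha_s^{n_s - 1}\rangle$ produces only $L_{s-1}$-combinations of lower powers of $\alpha_s$; since $L_{s-1} = F_{N_{s-1} - 1}$ already sits inside the relevant flag elements, these reductions do not push the product past the flag element predicted by the group-theoretic coset analysis used to compute $T(\mfT)$.

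For $(\Rightarrow)$, a direct calculation in the group construction shows that $F_{N_s - 1}$ corresponds to the subgroup $G_s$, giving $T(\mfT)(N_s - 1, N_s - 1) = N_s - 1$. The flag condition then forces $L_s \cdot L_s \subseteq L_s$, and being a finite-dimensional $K$-subalgebra of the field $L$, $L_s$ is itself a field; the dimension count $\dim_K L_s = n_s N_{s-1}$ gives $[L_s : L_{s-1}] = n_s$. To realize $L_s/L_{s-1}$ as simple, pick any $\alpha_s \in F_{N_{s-1}} \setminus L_{s-1}$ (nonempty since $\dim_K F_{N_{s-1}} = N_{s-1} + 1$), and prove by induction on $k$ that $\alpha_s^k \in F_{kN_{s-1}} \setminus F_{kN_{s-1} - 1}$ for $1 \leq k \leq n_s - 1$. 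The inclusion follows from $\alpha_s^k = \alpha_s \cdot \alpha_s^{k-1} \in F_{N_{s-1}} \cdot F_{(k-1)N_{s-1}}$ together with the group-case value $T(\mfT)(N_{s-1}, (k-1)N_{s-1}) = kN_{s-1}$.

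The main obstacle is the non-containment $\alpha_s^k \notin F_{kN_{s-1} - 1}$, argued by contradiction. Suppose $\alpha_s^k \in F_{kN_{s-1} - 1}$. The inductive hypothesis that $1, \alpha_s, \ldots, \alpha_s^{k-1}$ are $L_{s-1}$-linearly independent, combined with the $L_{s-1}$-stability of $F_{kN_{s-1} - 1}$ (itself a consequence of the group-case equality $T(\mfT)(N_{s-1} - 1, kN_{s-1} - 1) = kN_{s-1} - 1$) and a dimension count in $L_{s-1}$, identifies $F_{kN_{s-1} - 1} = L_{s-1}\langle 1, \alpha_s, \ldots, \alpha_s^{k-1}\rangle = L_{s-1}[\alpha_s]$, which is a subfield containing $F_{N_{s-1}}$. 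Then $F_{kN_{s-1} - 1} \cdot F_{N_{s-1}} \subseteq F_{kN_{s-1} - 1}$, contradicting $T(\mfT)(kN_{s-1} - 1, N_{s-1}) = (k+1)N_{s-1} - 1$. Once the induction is complete, the elements $1, \alpha_s, \ldots, \alpha_s^{n_s - 1}$ are $L_{s-1}$-linearly independent, and since there are $n_s = [L_s : L_{s-1}]$ of them they form an $L_{s-1}$-basis, so $L_s = L_{s-1}(\alpha_s)$ is simple.
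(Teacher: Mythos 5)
Your proof is correct, and its overall architecture is the paper's: the paper handles this theorem by declaring it ``entirely analogous'' to the proof of \cref{realizabilityforgroupsthm}, which uses the same lexicographic monomial flag for the backward direction and extracts the tower from the subspaces $F_{n_1\cdots n_s-1}$ (fields, by the subalgebra argument you give) for the forward direction, with $\alpha_s$ playing the role of $w_s = v_{n_1\cdots n_{s-1}}$. Where you genuinely diverge is the simplicity step. The paper's group argument reads off corners via \cref{cornerreadinglemma} to get $v_{j n_1\cdots n_{s-1}} = w_s^{\,j}$ by induction; the literal field analog of that lemma only says $v_{n_1\cdots n_{s-1}}\,v_{(j-1)n_1\cdots n_{s-1}} \in F_{jn_1\cdots n_{s-1}}\setminus F_{jn_1\cdots n_{s-1}-1}$, which concerns products of basis vectors rather than powers of a single element, so some adaptation is required that the paper leaves implicit. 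You supply it by a different mechanism: the values $T(\mfT)(n_1\cdots n_{s-1}-1,\,k n_1\cdots n_{s-1}-1)=k n_1\cdots n_{s-1}-1$ make $F_{k n_1\cdots n_{s-1}-1}$ an $L_{s-1}$-subspace, a dimension count identifies it with $L_{s-1}\la 1,\alpha_s,\dots,\alpha_s^{k-1}\ra = L_{s-1}[\alpha_s]$ under the contradiction hypothesis, and this subfield then violates $T(\mfT)(k n_1\cdots n_{s-1}-1,\,n_1\cdots n_{s-1}) = (k+1)n_1\cdots n_{s-1}-1$; all three flag-type values you invoke are correct, and the stability-plus-dimension-count route is a clean way to make the ``analogous'' field argument actually go through. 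Your backward direction is verified at the same level of detail as the paper's (``observe that $T_{\cF}=T(\mfT)$''); if you wanted to close that gap, the essential point is that $T(\mfT)(i,j)$ is always attained by a non-overflowing pair $a\le i$, $b\le j$, so reductions of $\alpha_s^{c}$ with $c\ge n_s$, which spread over $L_{s-1}$-multiples of $1,\dots,\alpha_s^{n_s-1}$, stay at indices at most $T(\mfT)(i,j)$, while the non-overflowing pair multiplies to the basis monomial of index exactly $T(\mfT)(i,j)$, giving both the containment and its sharpness.
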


We define a partial ordering $\leq$ on the set of flag types. For any two flag types $T$ and $T'$, say $T \leq T'$ if $T(i,j) \leq T'(i,j)$ for all $i,j \in [n]$. This partial ordering allows us to give a good notion of \emph{minimality} in many cases.

\begin{definition}
\label{minimalitydef}
We say:
\begin{enumerate}
\item a flag type is \emph{minimal} if it is minimal among realizable flag types;
\item a flag type $T$ is \emph{minimal for finite abelian groups} if $T$ is minimal among flag types that are realizable for finite abelian groups;
\item for a finite abelian group $G$, say $T$ is \emph{minimal for $G$} if $T$ is minimal among flag types that are realizable for $G$;
\item for a field $K$, a flag type $T$ is \emph{minimal over $K$} if $T$ is minimal amomng flag types that are realizable over $K$;
\item and for a degree $n$ field extension $L/K$, say $T$ is \emph{minimal for $L/K$} if $T$ is minimal among flag types that are realizable for $L/K$. 
\end{enumerate}
\end{definition}

\begin{definition}
We say a tower type $\mfT = (n_1,\dots,n_t)$ is \emph{prime} if $n_1,\dots,n_t$ are all prime.
\end{definition}

The following theorem is proved in \cref{realizabilitysection}.
\begin{restatable}{theorem}{minimality}
\label{minimality}
For a tower type $\mfT = (n_1,\dots,n_t)$, the flag type $T(\mfT)$ is minimal if and only if $\mfT$ is prime.
\end{restatable}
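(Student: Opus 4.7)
The plan is to prove both directions separately, using throughout the explicit mixed-radix description of $T(\mfT)$ provided by \cref{explicitflagtype}. For the direction ``composite implies not minimal'', suppose some entry $n_s$ of $\mfT = (n_1, \dots, n_t)$ factors nontrivially as $n_s = ab$ with $a, b > 1$. I set
\[
\mfT' := (n_1, \dots, n_{s-1}, a, b, n_{s+1}, \dots, n_t).
\]
Because $C_{n_s}$ has a unique subgroup of order $a$ with quotient $C_b$, the group $G = C_{n_1} \times \dots \times C_{n_t}$ admits a filtration whose cyclic quotients realize $\mfT'$, so by \cref{realizabilityforgroupsthm} the flag type $T(\mfT')$ is realizable. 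A direct comparison via the explicit formula shows $T(\mfT') \leq T(\mfT)$ pointwise, with strict inequality at $i = j = (a-1)\,n_1\cdots n_{s-1}$: under $\cF(\mfT')$ the element at index $i$ is the $(a-1)$-st power of a generator of the order-$a$ subgroup of $C_{n_s}$, so its square lies at index $(a-2)\,n_1\cdots n_{s-1}$ (wrapping inside that subgroup), whereas under $\cF(\mfT)$ the same-index element has full order $n_s$ and its square reaches index $(2a-2)\,n_1\cdots n_{s-1}$. Hence $T(\mfT)$ is not minimal.

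For the direction ``prime implies minimal'', I proceed by induction on the length $t$. When $t = 1$ and $p = p_1$, I choose a flag $\cF'$ on a degree-$p$ field extension $L/K$ realizing any hypothetical $T' \leq T(\mfT)$ (available by \cref{groupfieldlemma}); \cref{zemorthm} tells me $\Stab(F'_iF'_j)$ is a $K$-subfield of $L$, hence of $K$-dimension $1$ or $p$, and a two-case analysis shows $T'(i,j) \geq \min(i+j,\,p-1) = T(\mfT)(i,j)$. For the inductive step, I set $m := p_1\cdots p_{t-1}$, so that the explicit formula yields $T(\mfT)(m-1, m-1) = m - 1$; the bound $T' \leq T(\mfT)$ then forces $F'_{m-1}\cdot F'_{m-1} \subseteq F'_{m-1}$, making $K_1 := F'_{m-1}$ a $K$-subfield of $L$ of degree $m$. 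The restricted sub-flag on $K_1/K$ has type bounded by $T(p_1,\dots,p_{t-1})$ and, by the inductive hypothesis, must equal it, fixing $T'(i,j)$ on all indices below $m$. For the remaining pairs $(i,j)$ with $\max(i,j) \geq m$, I apply \cref{zemorthm} to $F'_iF'_j$ and observe that its stabilizer is a $K$-subfield of $L$, whose $K$-dimension must divide $n = p_1\cdots p_t$; matching this restricted list of divisors against the mixed-radix value of $T(\mfT)(i,j)$, together with the already-determined structure on $K_1/K$, yields $T'(i,j) \geq T(\mfT)(i,j)$ and hence equality.

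The main obstacle will be the high-index portion of the inductive step: one must show that Kneser's lower bound on $T'(i,j)$, filtered through the allowed stabilizer dimensions, exactly matches the combinatorial quantity $T(\mfT)(i,j)$ produced by the explicit formula. Primality is essential at this point—it pins the lattice of stabilizer dimensions to a structure aligned with the mixed-radix carry pattern of $T(\mfT)$, leaving no slack in which a strictly smaller realizable value could fit.
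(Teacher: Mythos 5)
Your ``composite $\Rightarrow$ not minimal'' direction is essentially the paper's argument (refine the composite entry into $\mfT'$ and compare), and it is correct, with two small caveats: \cref{explicitflagtype} only characterizes when $T(\mfT)(i,j)=i+j$, it is not a full formula, so the pointwise bound $T(\mfT')\le T(\mfT)$ needs an argument (the paper gets it from $\Len(\mfT)\subseteq\Len(\mfT')$ via \cref{cornerfacetthm}); and tracking only the square of the single element $v'_i$ does not bound $T(\mfT')(i,i)$ from above --- what one actually checks is that with $N=n_1\cdots n_{s-1}$ and $i=(a-1)N$ one has $T(\mfT')(i,i)=aN-1<(2a-2)N=T(\mfT)(i,i)$, which gives the needed strictness.

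The genuine gap is in the ``prime $\Rightarrow$ minimal'' direction, precisely at the step you yourself flag as the main obstacle. For pairs with $\max(i,j)\ge m$ you assert that \cref{zemorthm}, the fact that $\dim_K\Stab(F'_iF'_j)$ divides $n$, and the determined structure on $K_1/K$ ``yield $T'(i,j)\ge T(\mfT)(i,j)$,'' but this is exactly where all the work lies and it is not a matter of matching divisor lists. If $(i,j)$ does not overflow modulo $\mfT$ and $T'(i,j)<i+j$, then \cref{overflowlemma} only tells you that $i+j$ overflows modulo $m'=\dim_K\Stab(F'_iF'_j)$ together with the coset-dimension counts; to reach a contradiction you must locate that stabilizer relative to the tower of subfields $F'_{p_1\cdots p_s-1}$ forced by $T'\le T(\mfT)$, and the paper's \cref{mleqilemma} and the case analyses behind \cref{2prefined}, \cref{largepqcase}, and \cref{12} show that this kind of stabilizer bookkeeping is delicate even with more information in hand; nothing in your sketch supplies it. Note that the paper's own proof of \cref{minimality} avoids Kneser entirely for this direction: from $T_{\cF}\le T(\mfT)$ it deduces that each $F_{p_1\cdots p_s-1}$ is a subgroup (resp.\ subfield), sets $w_s=v_{p_1\cdots p_{s-1}}$, and then, by induction on $k$, uses that every remaining $k$ equals $i+j$ for a non-overflowing pair (\cref{explicitflagtype}) together with $F_iF_j\subseteq F_{i+j}$ and a counting/pigeonhole step to force $v_k=w_1^{k_1}\cdots w_t^{k_t}$; hence the whole flag, and so $T_{\cF}$, is pinned down. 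Your base case and your identification of $K_1=F'_{m-1}$ as a degree-$m$ subfield are sound and are special cases of that argument; replacing the unproven ``divisor-matching'' step with this monomial-forcing induction would close the proof.
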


The following theorem in proved in \cref{completesect} and \cref{notcompletesect}.
\begin{restatable}{theorem}{minimalitycompletethm}
\label{minimalitycompletethm}
The set $\{T(\mfT) \; \mid \; \mfT \text{ is prime and has degree } n\}$ is the set of minimal flag types of degree $n$ if and only if
\begin{enumerate}
\item $n = p^k$, with $p$ prime and $k \geq 1$;
\item $n = pq$, with $p$ and $q$ distinct primes;
\item or $n = 12$.
\end{enumerate}
\end{restatable}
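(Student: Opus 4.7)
The plan is to prove the two directions separately, matching the paper's partition into \cref{completesect} (where the classification is complete) and \cref{notcompletesect} (where it is not). Throughout, I would use the key fact from \cref{minimality} that $T(\mfT)$ is minimal if and only if $\mfT$ is prime; so the set $\{T(\mfT) \colon \mfT \text{ prime of degree } n\}$ is always a subset of the minimal flag types of degree $n$, and the question is exactly whether it exhausts them.

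For the ``if'' direction, I would show case-by-case that when $n = p^k$, $n = pq$, or $n = 12$, every minimal flag type of degree $n$ equals some $T(\mfT)$ with $\mfT$ prime. The main tool is the overflow constraint from \cref{overflowlemma}, which forces $T(i,j) \geq i+j$ unless a proper divisor $k \mid n$ produces a carry in base-$k$ addition. When $n = p^k$, the only proper divisors of $n$ are powers of $p$; a pointwise comparison argument using the explicit description of $T(p,\dots,p)$ from \cref{explicitflagtype} shows $T \geq T(p,\dots,p)$ for every realizable $T$, so minimality forces equality. When $n = pq$ the divisors in play are just $p$ and $q$, and one checks that any minimal $T$ satisfying the overflow constraints must agree with either $T(p,q)$ or $T(q,p)$. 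The case $n = 12$ is the delicate sporadic case: the divisors $\{2,3,4,6\}$ allow several a priori candidate values of $T(i,j)$, and one has to enumerate possibilities at a finite number of key entries and rule out all minimal $T$ not equal to $T(2,2,3)$, $T(2,3,2)$, or $T(3,2,2)$, invoking \cref{realizabilityforgroupsthm} along the way to verify realizability of the surviving candidates.

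For the ``only if'' direction, I would prove the contrapositive: for every $n$ outside the three families (the smallest such is $n = 18$, followed by $20, 24, 28, 30, 36, \dots$), exhibit a realizable flag type $T$ with $T \not\geq T(\mfT)$ for every prime tower type $\mfT$ of degree $n$. Any minimal $T' \leq T$ then also satisfies $T' \not\geq T(\mfT)$ for every prime $\mfT$, hence $T' \neq T(\mfT)$ for all such $\mfT$, giving a minimal flag type outside the set. The construction would start from an appropriate direct product of cyclic groups matching the prime factorization of $n$ and produce an ordering of its elements that deviates from every lexicographic ordering in a controlled way, ensuring that at some entry the resulting $T$ is strictly below all $T(\mfT)$. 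For each shape of excluded $n$ (for instance, $n = p^a q^b$ with $a + b \geq 3$ and $n \neq 12$, or $n$ with three or more distinct prime factors), I would describe a uniform family of such deviating orderings and verify the deviation using the explicit formulas of \cref{explicitflagtype}.

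The main obstacle, in my expectation, is the sporadic case $n=12$ in the forward direction: unlike $p^k$ and $pq$, the divisor structure of $12$ leaves enough room that ruling out non-tower minimal flag types requires a delicate combinatorial enumeration of candidate values of $T$ across several entries. In the reverse direction, the obstacle is producing a single uniform description of ``deviating'' orderings that covers every excluded $n$; this may require separate constructions depending on the factorization type, though the underlying principle---break lexicographic structure at a position where no divisor of $n$ enforces a carry---should be the same in each case.
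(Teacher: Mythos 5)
Your overall architecture (sufficiency by a case analysis for $n=p^k$, $pq$, $12$; necessity by exhibiting, for each excluded $n$, a realizable $T$ not above any $T(\mfT)$ and passing to a minimal $T'\leq T$) matches the paper, and your bookkeeping with \cref{minimality} and the passage from $T$ to a minimal $T'$ is fine. However, the forward direction has a genuine gap at its crux. For $n=pq$ you assert that ``any minimal $T$ satisfying the overflow constraints must agree with either $T(p,q)$ or $T(q,p)$,'' treating this as a routine check; but the carry condition alone does not yield this dichotomy. A flag type can perfectly well have one entry $(i_1,j_1)$ that overflows only modulo $q$ (so the constraint permits $T(i_1,j_1)<i_1+j_1$, violating $T\geq T(p,q)$) and another entry $(i_2,j_2)$ that overflows only modulo $p$ (violating $T\geq T(q,p)$); nothing in the weak overflow constraint forbids both at once. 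What the paper actually proves (\cref{2prefined}, \cref{largepqcase}, \cref{12}) is that such a configuration cannot be \emph{realizable}: the strong form of \cref{overflowlemma} makes the two sumsets unions of cosets of stabilizers of coprime orders $p$ and $q$, and a comparison of indices inside the nested flag forces the whole group (or field) into a proper sumset $F_{i}F_{j}$, a contradiction. That interplay between two simultaneous collapses with coprime stabilizers is the missing idea, and it is also the engine behind the finite enumeration for $n=12$ (the paper enumerates witness triples $(i,j,m)$ with overflow data, not candidate values of $T$; note also that \cref{realizabilityforgroupsthm} only characterizes realizability of the special types $T(\mfT)$, so it cannot certify arbitrary ``surviving candidates'').

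In the reverse direction your route differs from the paper's: you propose direct deviating orderings for every excluded $n$, organized by factorization shape, whereas the paper first proves a lifting lemma (\cref{refconjcounterexextensionlemma}) showing that a counterexample in degree $m$ propagates to every multiple of $m$, and then only has to construct counterexamples for the base shapes $p^2q$, $pqr$, and $4p$ (\cref{p2q}, \cref{pqr}, \cref{4p}); the lemma is stated polyhedrally precisely so that the counterexample can be transported. Your verification criterion (exhibit, for each prime $\mfT$, a non-overflowing pair where your $T$ is below $i+j$, via \cref{explicitflagtype}) is the right one, but no construction is actually supplied, and the content is nontrivial: for instance in the $pqr$ case the paper must first prove a number-theoretic lemma (\cref{pqrhelperlemma}) to locate an index $m$ whose doubling overflows modulo $q$ after scaling by $p$ but not before, and the verification that the exhibited point lies in $P_T$ is a lengthy case check. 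So the necessity direction as written is a plan without the decisive constructions, and the sufficiency direction, as written, rests on a claim that is false without the stabilizer-coset argument; both need to be filled in before this constitutes a proof.
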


\subsection{Tower types of flags}

To a flag $\cF$ of a finite abelian group $G$, we can associate a tower of subgroups
\[
	\{1\} = G_0 \subset G_1 \subset \dots \subset G_t = G	
\]
such a subgroup is in the tower if and only if it is generated by $F_i$ for some $i \in [n]$. The \emph{tower type} of $\cF$ is
\[
	\bigg(\frac{\lv G_1 \rv}{\lv G_0 \rv},\dots, \frac{\lv G_t \rv}{\lv G_{t-1} \rv}\bigg)
\]
Similarly, to a flag $\cF$ of a field extension $L/K$, we can associate a tower of field extensions
\[
	K = L_0 \subset L_1 \subset \dots \subset L_t = L
\]
such a field is in the tower if and only if it is generated by $F_i$ for some $i \in [n]$. The \emph{tower type} of $\cF$ is
\[
	\big([L_1 \colon L_0],\dots, [L_t \colon L_{t-1}]\big).
\]

\begin{restatable}{theorem}{lenstrathm}
\label{lenstrathm}
Suppose $\mfT = (n_1,\dots,n_t)$ is as follows:
\begin{enumerate}
\item $n < 8$;
\item $n = 8$ and $\mfT \neq (8)$;
\item $n$ is prime;
\item $\mfT = (p,\dots,p)$ for a prime $p$;
\item $\mfT = (2,p)$ for a prime $p$;
\item or $\mfT = (3,p)$ for a prime $p$;
\end{enumerate}
Then $T(\mfT)$ is the unique flag type that is minimal among flags of tower type $\mfT$. 
\end{restatable}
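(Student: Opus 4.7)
The plan is to show that every flag $\cF$ of tower type $\mfT$, whether in a finite abelian group or a degree $n$ field extension, satisfies $T_\cF \ge T(\mfT)$ pointwise; since the canonical lex flag realises $T(\mfT)$, this makes it the unique minimum. The main tools are Kneser's theorem (\cref{kneserorig}) in the group case and the Bachoc--Serra--Z\'emor inequality (\cref{zemorthm}) in the field case, which I use in parallel. Setting $m_s = n_1\cdots n_s$, a cardinality/dimension count forces $F_{m_s-1}$ to coincide with the $s$-th term of the tower of $\cF$ guaranteed by \cref{realizabilityforgroupsthm} and \cref{realizabilityforfieldsthm}; so the tower is visible inside $\cF$, and I may analyse each sumset $F_iF_j$ layer-by-layer with respect to it.

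Case (3), $\mfT=(p)$ prime, follows immediately: no proper nontrivial subgroup/subfield exists, so Kneser/B-S-Z give $|F_iF_j|\ge \min(p, i+j+1)$, hence $T_\cF(i,j)\ge\min(i+j, n-1)=T((p))(i,j)$. Case (4), $\mfT=(p,\dots,p)$, follows by induction on $t$: the subgroup $G_{t-1}$ has index $p$ in $G$, so partitioning $\cF$ into the $p$ coset-layers of $G_{t-1}$, applying the inductive hypothesis to the ``horizontal'' flag on $G_{t-1}$, and applying Kneser/B-S-Z vertically on the prime-order quotient $G/G_{t-1}$ yields the bound. Cases (5) and (6), $\mfT=(2,p)$ and $(3,p)$, are handled by the same two-level layer decomposition, with the explicit mixed-radix formula of \cref{mixedradixdefn} matching the combined inner and outer Kneser bounds.

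After these, the tower types remaining in (1) and (2) are $(4)$, $(6)$, $(2,4)$, and $(4,2)$. I expect this step to be the main obstacle: because the tower now contains a composite factor, induction on tower length breaks down, and the stabiliser of a sumset $F_iF_j$ may be a subgroup/subfield not lying in the tower of $\cF$, which cannot be reduced to a shorter tower. Fortunately, for $n\le 8$ the subgroup/subfield lattice of the ambient structure is small enough to enumerate, and a direct case analysis---for each pair $(i,j)$, classifying the possible stabilisers, bounding $|F_iF_j|$ via Kneser/B-S-Z, and comparing entry-by-entry with the lex value $T(\mfT)(i,j)$---completes the argument.
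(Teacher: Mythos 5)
There is a genuine gap, on two levels. First, your structural claim that a cardinality count forces $F_{n_1\cdots n_s-1}$ to coincide with the $s$-th term of the tower of $\cF$ is false, and \cref{realizabilityforgroupsthm}/\cref{realizabilityforfieldsthm} do not say this (they characterize which $G$ or $L/K$ admit a flag realizing $T(\mfT)$, not the shape of an arbitrary flag of tower type $\mfT$). The tower attached to $\cF$ consists of the subgroups (subfields) \emph{generated} by the $F_i$, so all one knows is that $\la F_i \ra$ is a tower member; the sets $F_{n_1\cdots n_s-1}$ need not be subgroups at all. For instance, with $\mfT=(3,p)$ take $F_1=\{1,h\}$ with $h$ of order $3$ and $F_2=\{1,h,x\}$ with $x\notin\la h\ra$: the tower is $\{1\}\subset\la h\ra\subset G$, of type $(3,p)$, yet $F_2\neq\la h\ra$. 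Since your layer-by-layer decomposition of $F_iF_j$ along the tower is anchored on this false identification, cases (4), (5), (6) are not actually established by the outline.

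Second, and more importantly, the proposal never engages with the central difficulty in the unbounded cases $(2,p)$ and $(3,p)$: when $T_{\cF}(i,j)<i+j$ at a pair that does not overflow modulo $\mfT$, Kneser/Bachoc--Serra--Z\'emor (\cref{overflowlemma}) produce a nontrivial stabilizer of $F_iF_j$, and this stabilizer can be a subgroup or subfield \emph{not} lying in the tower of $\cF$ (e.g.\ the order-$p$ subgroup when the tower is $\{1\}\subset H_2\subset G$). You acknowledge this phenomenon only for $n\le 8$, where you propose enumeration, but it is exactly the live case for $(2,p)$ and $(3,p)$ with $p$ arbitrary, where enumeration is unavailable and ``inner plus outer Kneser along the tower'' gives no mechanism to exclude it. The paper's proof turns on a specific lemma (\cref{mleqilemma}): if $i+j$ does not overflow modulo $\mfT$ and $T_{\cF}(i,j)<i+j$, then $m=\lv\Stab(F_iF_j)\rv$ satisfies $m<i$, because otherwise $F_i\subseteq\Stab(F_iF_j)$, hence the tower member $\la F_i\ra$, of order $n_1\cdots n_k$, sits inside the stabilizer and a mixed-radix computation contradicts non-overflow modulo $\mfT$. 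From $m<i$ the cases $(2,p)$ and $(3,p)$ fall out ($i,j>p$ forces $i+j\ge 2p$, resp.\ $\ge 3p$), and the same lemma with $i\le 2$ or $i+j=n-1$ disposes of $(4)$, $(6)$, $(2,4)$, $(4,2)$. No step in your outline plays this role, so the argument as proposed does not close. (Minor points: case (4) needs no induction --- for $n=p^k$ carry-free base-$p$ addition cannot overflow modulo any divisor of $n$, so \cref{overflowlemma} settles it for \emph{all} realizable flag types; and your induced ``horizontal flag'' on $G_{t-1}$ need not have tower type $(p,\dots,p)$ of length $t-1$, so even the stated induction hypothesis would not apply.)
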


\subsection{Polyhedra associated to flag types}
We will study flag types using polyhedra. Let $T$ be a flag type. 
\begin{restatable}{definition}{ptdefn}
\label{ptdefn}
Let $P_{T}$ be the set of $\mathbf{x} = (x_1,\dots,x_{n-1}) \in \RR^{n-1}$ such that $0 \leq x_1 \leq \dots \leq x_{n-1}$ and $x_{T(i,j)} \leq x_i + x_j$ for all $1 \leq i,j < n$.
\end{restatable}

The set of $P_T$ has a natural poset structure given by inclusion. The definition of $P_T$ implies that $P_T \subseteq P_{T'}$ if and only if $T' \leq T$. 

The following is proved in \cref{polysection}.
\begin{restatable}{lemma}{injectionlemma}
\label{injectionlemma}
The map from flag types to polyhedra sending $T$ to $P_T$ is an injection.
\end{restatable}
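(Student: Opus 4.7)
The plan is to show that the flag type $T$ can be reconstructed from the polyhedron $P_T$ via the explicit formula
\[
T(i,j) \;=\; \max\bigl\{k \in [n] : x_k \leq x_i + x_j \text{ for all } \mathbf{x} \in P_T\bigr\}
\]
for $1 \leq i, j \leq n-1$, together with the axiomatic relation $T(i, 0) = i$. Since the right-hand side depends only on $P_T$, this will establish that $T \mapsto P_T$ is injective.

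One direction of the formula---that the right-hand side is at least $T(i,j)$---is immediate: the inequality $x_{T(i,j)} \leq x_i + x_j$ is a defining constraint of $P_T$, and combining this with the monotonicity $x_1 \leq \dots \leq x_{n-1}$ built into $P_T$ yields $x_k \leq x_{T(i,j)} \leq x_i + x_j$ for every $k \leq T(i,j)$. The substantive step is the reverse inequality: for every $(i, j, k)$ with $k > T(i, j)$, I must exhibit a point $\mathbf{x} \in P_T$ satisfying $x_k > x_i + x_j$. I plan to construct such $\mathbf{x}$ by a greedy inductive recipe: set $x_1 = \epsilon$ for a small parameter $\epsilon > 0$, and for $m = 2, \dots, n-1$ take $x_m$ to be the largest value compatible with the $P_T$ constraints involving only $x_1, \dots, x_m$---namely $x_m \geq x_{m-1}$ together with $x_m \leq x_{i'} + x_{j'}$ for each pair $(i', j')$ with $T(i', j') \geq m$ and $i', j' < m$; when no such bounding pair exists, $x_m$ is left as a free positive parameter to be fixed later. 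Since $T(i, j) < k$, the pair $(i, j)$ itself imposes no upper bound on $x_k$, and any active pair $(i', j')$ with $T(i', j') \geq k$ must by monotonicity of $T$ satisfy $i' > i$ or $j' > j$, producing slack $x_{i'} + x_{j'} > x_i + x_j$ in the greedy output.

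The main obstacle will be to verify this strict inequality robustly across all cases. In the clean subcase where $T(T(i,j), T(i,j)) \leq T(i,j)$, the two-level vector with $x_m = 0$ for $m \leq T(i,j)$ and $x_m = 1$ for $m > T(i,j)$ already lies in $P_T$ and directly produces $x_k = 1 > 0 = x_i + x_j$, so the real difficulty is concentrated in the general case, where the greedy recipe must be paired with a careful choice of the free parameters and a downstream tightness analysis. Alternatively, one can dualize via Farkas' lemma and argue that the inequality $x_k - x_i - x_j \leq 0$ cannot be written as a nonnegative combination of the defining inequalities of $P_T$ when $k > T(i, j)$, by examining possible derivations and using that every chain of such derivations, when reduced, eventually requires a step whose index is bounded by $T(i, j)$. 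Either route completes the formula above, after which the injectivity of $T \mapsto P_T$ is immediate: if $P_T = P_{T'}$, both $T$ and $T'$ equal the common right-hand side of the formula, so $T = T'$.
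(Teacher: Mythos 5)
Your overall strategy -- reconstructing $T(i,j)$ from $P_T$ as the largest $k$ for which $x_k \leq x_i + x_j$ holds on all of $P_T$ -- is a reasonable route, and it differs from the paper, which instead deduces injectivity in two lines from \cref{cornerfacetthm}: the facets of $P_T$ are exactly the inequalities $x_{T(i,j)} \leq x_i + x_j$ at corners $(i,j)$, a polyhedron determines its facets, and a flag type is determined by its corners. But your proposal has a genuine gap at precisely the step that carries all the content: you must produce, for every $(i,j)$ and every $k > T(i,j)$, a point of $P_T$ with $x_k > x_i + x_j$, and you never actually construct one outside the easy subcase $T(T(i,j),T(i,j)) \leq T(i,j)$ (the two-level $0/1$ point). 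The justification you offer for the greedy construction is not sound as stated: from $T(i',j') \geq k > T(i,j)$ and monotonicity you only get $i' > i$ \emph{or} $j' > j$, and if, say, $i' > i$ but $j' < j$, then $x_{i'} \geq x_i$ while $x_{j'} \leq x_j$, so there is no reason for the claimed slack $x_{i'} + x_{j'} > x_i + x_j$; such pairs genuinely occur (monotonicity in each variable separately does not order $T(i+1,j-1)$ against $T(i,j)$). The Farkas alternative is likewise only gestured at ("examining possible derivations"), with no actual analysis of which nonnegative combinations of the constraints $x_a \leq x_{a+1}$ and $x_{T(a,b)} \leq x_a + x_b$ can produce $x_k \leq x_i + x_j$. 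You yourself flag this verification as "the main obstacle," and it is left unresolved, so the argument is a plan rather than a proof.

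For comparison, the paper avoids this difficulty entirely: it does not need to certify, for every pair $(i,j)$, which inequalities $x_k \leq x_i + x_j$ are valid on $P_T$; it only needs the facet characterization of \cref{cornerfacetthm} (whose proof already contains the explicit point constructions of the kind your greedy recipe is trying to produce), together with the elementary observation that the corner values determine all of $T$ by monotonicity. If you want to salvage your approach, the most economical fix is to invoke \cref{cornerfacetthm} (or redo its point constructions) to handle the general case of your reverse inequality, rather than reproving that machinery ad hoc.
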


So, the poset of polyhedra $P_T$ is canonically dual to the poset of flag types.

\begin{restatable}{definition}{lenpdefn}
\label{lenpdefn}
Let $\Len(\mfT)$ be the polyhedron associated to the flag type $T(\mfT)$.
\end{restatable}

We have a variant of \cref{minimalitycompletethm} for polyhedra.
\begin{restatable}{theorem}{minimalitycompletethmbetter}
\label{minimalitycompletethmbetter}
We have 
\[
	\bigcup_T P_T =  \bigcup_{\mfT}\Len(\mfT)
\]
as $T$ ranges across realizable flag types if and only if $n$ is of the following form:
\begin{enumerate}
\item $n = p^k$, with $p$ prime and $k \geq 1$;
\item $n = pq$, with $p$ and $q$ distinct primes;
\item or $n = 12$.
\end{enumerate}
\end{restatable}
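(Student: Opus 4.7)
The plan is to deduce \cref{minimalitycompletethmbetter} from \cref{minimalitycompletethm} using the order-reversing injection $T \mapsto P_T$ of \cref{injectionlemma}. First I would reduce both sides of the claimed identity to a ``minimal generating'' subfamily. Since $T_0 \leq T$ implies $P_T \subseteq P_{T_0}$, and every realizable flag type $T$ dominates some minimal realizable $T_0$, the left-hand union equals $\bigcup_{T_0} P_{T_0}$ as $T_0$ ranges over minimal realizable flag types of degree $n$. On the right, the explicit description of $T(\mfT)$ (see \cref{explicitflagtype}) yields that any prime refinement $\mfT'$ of $\mfT$ satisfies $T(\mfT') \leq T(\mfT)$; hence $\Len(\mfT) \subseteq \Len(\mfT')$, so the right-hand union equals $\bigcup_{\mfT \text{ prime}} \Len(\mfT)$.

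The forward implication ($\Leftarrow$) is then immediate: when $n \in \{p^k, pq, 12\}$, \cref{minimalitycompletethm} identifies the set of minimal realizable flag types of degree $n$ with $\{T(\mfT) : \mfT \text{ prime of degree } n\}$, so both reduced unions become the same set $\bigcup_{\mfT \text{ prime}} \Len(\mfT)$.

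For the reverse implication ($\Rightarrow$), I argue by contrapositive. Assume $n \notin \{p^k, pq, 12\}$. By \cref{minimalitycompletethm}, there is a minimal realizable flag type $T$ with $T \neq T(\mfT)$ for every prime tower type $\mfT$ of degree $n$. I would show $P_T \not\subseteq \bigcup_{\mfT \text{ prime}} \Len(\mfT)$. For each prime $\mfT$, \cref{minimality} guarantees that $T(\mfT)$ is itself minimal realizable, so minimality of $T$ combined with $T \neq T(\mfT)$ forces $T \not\geq T(\mfT)$, and therefore $P_T \not\subseteq \Len(\mfT)$; hence each $P_T \cap \Len(\mfT)$ is a proper closed sub-polyhedron of $P_T$.

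The main obstacle is that finitely many proper sub-polyhedra can in principle still cover $P_T$. To rule this out, I would appeal to the explicit construction of $T$ in \cref{notcompletesect}: for each prime $\mfT$, select a pair $(i_\mfT, j_\mfT)$ witnessing $T(i_\mfT, j_\mfT) < T(\mfT)(i_\mfT, j_\mfT)$, and construct $x^* \in P_T$ by starting from a relative interior point and perturbing along a direction designed to push the slack $x^*_{T(\mfT)(i_\mfT,j_\mfT)} - x^*_{i_\mfT} - x^*_{j_\mfT}$ above zero simultaneously for every prime $\mfT$. Since the $P_T$-inequalities only bound $x_{T(i_\mfT,j_\mfT)}$, with $T(i_\mfT,j_\mfT) < T(\mfT)(i_\mfT,j_\mfT)$, in terms of $x_{i_\mfT} + x_{j_\mfT}$, the coordinate $x_{T(\mfT)(i_\mfT,j_\mfT)}$ is free to be chosen larger while $x^*$ remains in $P_T$. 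A careful joint perturbation argument then produces the desired $x^*$ outside every $\Len(\mfT)$, completing the contrapositive.
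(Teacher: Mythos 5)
Your reduction of both unions to minimal realizable flag types on one side and to prime tower types on the other, and your forward direction via \cref{minimalitycompletethm}, are correct (and legitimate, since \cref{minimalitycompletethm} is proved independently of the present statement). The gap is in the reverse direction. Knowing only that $T$ is minimal and $T \not\geq T(\mfT)$ for every prime $\mfT$ gives, via \cref{testflagineqlemma}, a witness pair $(i_{\mfT},j_{\mfT})$ for each $\mfT$ separately; but, as you yourself note, this does not prevent the finitely many proper subpolyhedra $P_T \cap \Len(\mfT)$ (which can be full-dimensional) from covering $P_T$. Your proposed fix, a ``joint perturbation'' from a relative interior point, is asserted rather than proved, and its key claim --- that the coordinate $x_{T(\mfT)(i_{\mfT},j_{\mfT})}$ is ``free to be chosen larger'' --- is not justified: raising that coordinate forces, through the chain $x_k \leq x_{k+1} \leq \dots \leq x_{n-1}$, raising all higher coordinates, which are themselves bounded by the constraints $x_{T(a,b)} \leq x_a + x_b$ of $P_T$ (pairs with $a > i_{\mfT}$ or $b > j_{\mfT}$ can have $T(a,b) \geq i_{\mfT}+j_{\mfT}$), and, worse, may raise $x_{i_{\mfT'}}$ or $x_{j_{\mfT'}}$ for the witnesses of other prime tower types, tightening exactly the inequalities you are trying to violate. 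Producing a single point with all slacks positive simultaneously is the entire content of the hard direction, and it does not follow by soft arguments from minimality of an arbitrary $T$.

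The paper does not attempt such a perturbation: its reverse direction is precisely \cref{notcompletethm}, proved in \cref{notcompletesect} by reducing (via \cref{refconjcounterexextensionlemma}) to $n \in \{p^2q,\, pqr,\, 4p\}$ and then, in \cref{p2q}, \cref{pqr}, and \cref{4p}, constructing explicit flags together with explicit points $\mathbf{x} \in P_{T_{\cF}}$ whose coordinates are prescribed in closed form and checked case by case to violate an inequality valid on every prime $\Len(\mfT)$. If your appeal to \cref{notcompletesect} is meant to invoke that result as a black box, then your perturbation step is unnecessary --- \cref{notcompletethm} already states the required non-containment, and the paper's proof of the present theorem is simply to combine \cref{completethm} with \cref{notcompletethm}, together with the observations that $\bigcup_{\mfT}\Len(\mfT)$ equals the union over prime $\mfT$ and is always contained in $\bigcup_T P_T$. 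As written, however, your proof replaces the one genuinely difficult step with an unproven claim.
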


The following is a corollary of \cref{lenstrathm}.

\begin{restatable}{corollary}{lenstrathmcorol}
Suppose $\mfT = (n_1,\dots,n_t)$ is as follows:
\begin{enumerate}
\item $n < 8$;
\item $n = 8$ and $\mfT \neq (8)$;
\item $n$ is prime;
\item $\mfT = (p,\dots,p)$ for a prime $p$;
\item $\mfT = (2,p)$ for a prime $p$;
\item or $\mfT = (3,p)$ for a prime $p$.
\end{enumerate}
Then
\[
	\bigcup_{\cF} P_{T_{\cF}} = \Len(\mfT)
\]
as $\cF$ ranges across all flags with tower type $\mfT$.
\end{restatable}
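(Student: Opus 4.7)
The plan is to read off the corollary directly from \cref{lenstrathm} together with the order-reversing injection $T \mapsto P_T$. Throughout I write $\mfT$ for a tower type as in the hypothesis.

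First I would show the inclusion $\bigcup_{\cF} P_{T_{\cF}} \subseteq \Len(\mfT)$. Fix any flag $\cF$ of tower type $\mfT$. By \cref{lenstrathm}, $T(\mfT)$ is the unique minimum of the poset of flag types of flags with tower type $\mfT$, so $T(\mfT) \leq T_{\cF}$. The definition of $P_T$ already gives the easy half of the duality: if $T(\mfT)(i,j) \leq T_{\cF}(i,j)$ for all $i,j$, then any point $\mathbf{x} \in P_{T_{\cF}}$ satisfies $x_{T(\mfT)(i,j)} \leq x_{T_{\cF}(i,j)} \leq x_i + x_j$ (using the monotonicity $x_1 \leq \dots \leq x_{n-1}$), hence lies in $P_{T(\mfT)} = \Len(\mfT)$. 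Taking the union over $\cF$ gives the desired inclusion.

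For the reverse inclusion, I would exhibit an explicit flag of tower type $\mfT$ whose flag type is $T(\mfT)$. The natural candidate is the flag $\cF(\mfT)$ built in the preamble: it is the lexicographic flag on $G = C_{n_1} \times \dots \times C_{n_t}$, and by construction its associated chain of generated subgroups is
\[
\{1\} \subset \langle e_1 \rangle \subset \langle e_1, e_2 \rangle \subset \dots \subset \langle e_1,\dots,e_t\rangle = G,
\]
so its tower type is exactly $\mfT$. By definition of $T(\mfT)$, we also have $T_{\cF(\mfT)} = T(\mfT)$. Therefore
\[
\Len(\mfT) = P_{T(\mfT)} = P_{T_{\cF(\mfT)}} \subseteq \bigcup_{\cF} P_{T_{\cF}},
\]
where the union ranges over flags of tower type $\mfT$.

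Combining the two inclusions gives the claimed equality. There is essentially no obstacle: all the content is in \cref{lenstrathm}, and the translation to polyhedra is mechanical once one checks the easy direction of the duality between $\leq$ on flag types and inclusion of the associated polyhedra. The only point to be careful about is to use the flag $\cF(\mfT)$ (rather than some arbitrary realization of $T(\mfT)$) to ensure the witnessing flag has tower type $\mfT$; this is immediate from the mixed-radix construction.
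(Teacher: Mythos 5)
Your proof is correct and is exactly the intended deduction: the paper states this as an immediate corollary of \cref{lenstrathm} without writing out details, and your two inclusions (using $T(\mfT)\leq T_{\cF}$ for every flag $\cF$ of tower type $\mfT$, the order-reversing relation between flag types and their polyhedra, and the witness flag $\cF(\mfT)$ whose tower type is $\mfT$) are precisely that argument.
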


We explicitly describe the polyhedron $P_T$ in terms of the flag type $T$. 

\begin{restatable}{definition}{cornerdef}
\label{cornerdef}
For $0 < i,j < n$, say $(i,j)$ is a \emph{corner} of $T$ if $T(i-1,j) < T(i,j)$ and $T(i,j-1) < T(i,j)$.
\end{restatable}

The corners of $T$ describe facets of $P_T$, as proved in \cref{polysection}.

\begin{restatable}{theorem}{cornerfacetthm}
\label{cornerfacetthm}
$P_T$ is an unbounded polyhedron of dimension $n-1$. For $i,j,k \in [n]$, the inequality $x_{k} \leq x_i + x_j$ defines a facet of $P_T$ if and only if $(i,j)$ is a corner and $T(i,j)=k$.
\end{restatable}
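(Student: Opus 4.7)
The plan is to prove the three assertions in turn. For unboundedness, the diagonal ray $\{(t,t,\dots,t) : t\geq 0\}$ lies in $P_T$ since $t \leq 2t$. For $\dim P_T = n-1$, the point $\mathbf{y} = (R+1,R+2,\dots,R+n-1)$ with $R \geq n$ is a strict interior point: every ordering inequality is strict, and each multiplication inequality $x_{T(i,j)} \leq x_i + x_j$ becomes $R + T(i,j) < 2R + i + j$, which holds because $T(i,j) \leq n-1 \leq R$.

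For the ``only if'' direction of the facet classification, I split into cases based on the relationship between $k$ and $T(i,j)$. When $k < T(i,j)$, any point on the face $\{x_k = x_i + x_j\} \cap P_T$ satisfies $x_k = x_i + x_j \geq x_{T(i,j)} \geq x_{k+1} \geq x_k$, forcing $x_k = x_{k+1} = \cdots = x_{T(i,j)}$; this produces at least $T(i,j) - k \geq 1$ extra equalities, so the face has codimension at least two. When $k > T(i,j)$, the hyperplane $x_k = x_i + x_j$ is not supporting: every corner of $T$ at level $k$ lies outside the product-order down-set of $(i,j)$, so for a sufficiently sharply-increasing $\mathbf{x} \in P_T$ one can arrange $x_k > x_i + x_j$, placing $P_T$-points on both sides of the hyperplane. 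When $k = T(i,j)$ but $(i,j)$ is not a corner, without loss of generality $T(i-1,j) = k$; then $x_k \leq x_{i-1} + x_j$ (a defining inequality) and $x_{i-1} \leq x_i$ (ordering) sum to give $x_k \leq x_i + x_j$ as a non-negative combination of other defining inequalities, making it redundant.

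For the ``if'' direction, given a corner $(i,j)$ with $T(i,j) = k$, I construct a witness point $\mathbf{x}^\ast \in P_T$ where only $x_k \leq x_i + x_j$ is tight. Starting from an interior point whose coordinates are weighted to separate the ``small'' indices $\leq i$ from the ``large'' indices $> i$ so that $y_a + y_b > y_i + y_j$ for every pair $(a,b) \neq (i,j),(j,i)$ with $T(a,b) = k$ (which is possible by the corner condition, since any such $(a,b)$ must satisfy $a > i$ or $b > j$---otherwise $T(a,b) \leq T(i-1,j) < k$ or $T(a,b) \leq T(i,j-1) < k$), I continuously deform by decreasing $x_k$ while proportionally compressing the intermediate coordinates to preserve strict monotonicity, until the hyperplane $x_k = x_i + x_j$ is first touched. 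The asymmetric weighting ensures this contact is unique, producing a point in the relative interior of a face of dimension $n-2$. The main obstacle I anticipate is verifying that the asymmetric interior point remains in $P_T$---since the defining inequalities among coordinates of different ``sizes'' interact nontrivially---and that the subsequent compression does not prematurely tighten any unrelated multiplication constraint, especially in the case $T(i,j) > i+j$ where the target value $y_i^\ast + y_j^\ast$ lies below the initial $y_{k-1}$.
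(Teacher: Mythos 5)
Your treatment of everything except the crucial ``if'' direction is sound, and in places cleaner than the paper's: the interior point $(R+1,\dots,R+n-1)$ settles unboundedness and full dimension at once (the paper instead exhibits $n-1$ independent points); the case $k<T(i,j)$ correctly forces the extra equalities $x_k=\dots=x_{T(i,j)}$, giving codimension at least two; and for a non-corner $(i,j)$ with $k=T(i,j)$ your redundancy argument works, since equality in $x_k\le x_i+x_j$ forces equality in both $x_k\le x_{i-1}+x_j$ and $x_{i-1}\le x_i$, two independent equations, so the face has codimension at least two (full-dimensionality, which you established, is what makes this clean). Two small caveats: ``sufficiently sharply-increasing $\mathbf{x}\in P_T$'' in the case $k>T(i,j)$ is not right as stated, because rapidly increasing vectors generically violate the constraints $x_{T(a,b)}\le x_a+x_b$; what works is a bounded step vector such as $2$ on $[1,i]$, $3$ on $(i,j]$, $4$ on $(j,k)$, $6$ on $[k,n-1]$, using exactly your observation that any $(a,b)$ with $T(a,b)\ge k$ has $a>i$ or $b>j$.

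The genuine gap is the ``if'' direction, which is the heart of the theorem, and it is the step you yourself flag as unresolved. You never establish that an interior point of $P_T$ with the required asymmetric weighting exists (the weighting fights the defining inequalities: e.g.\ to beat a pair $(i+1,b)$ at level $k$ you need a large jump $y_{i+1}-y_i$, while a constraint such as $x_{T(i,1)}\le x_i+x_1$ with $T(i,1)\ge i+1$ caps that jump by $y_1$), and you never verify that the proposed deformation stays inside $P_T$ until it meets $\{x_k=x_i+x_j\}$: decreasing $x_k$ and the intermediate coordinates tightens every constraint in which those coordinates appear on the right-hand side with left-hand index above $k$, and nothing in the sketch rules out hitting one of those first. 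Proving either claim directly is essentially as hard as the facet statement itself. The paper sidesteps the deformation entirely: having shown (as in your non-corner case) that the orderings together with the corner inequalities cut out $P_T$, it writes down the explicit point with values $2,3,4,6$ as above (or $2,3,5$ when $i=j$), which satisfies the orderings and every other corner inequality but violates $x_k\le x_i+x_j$. You can finish your argument the same way: take the segment from your interior point $(R+1,\dots,R+n-1)$ to that explicit point; it crosses the hyperplane $x_k=x_i+x_j$ at a point of $P_T$ at which every constraint other than $x_k\le x_i+x_j$ is strict, which is exactly the relative-interior facet witness your deformation was meant to produce.
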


This explicit description of $P_T$ specializes to the polyhedron $P(n_1,\dots,n_t)$. To state how, we will first need the following definition.

\begin{restatable}{definition}{overflowdefn}
\label{overflowdefn}
Fix a tower type $\mfT=(n_1,\dots,n_t)$ and integers $0 \leq i, j \leq i + j < n$. We say the addition $i+j$ \emph{does not overflow modulo $\mfT$} if we write $i$, $j$, and $k=i+j$ in mixed radix notation with respect to $\mfT$ as 
\[
	i = i_1 + i_2 n_1 + i_3 (n_1n_2) + \dots + i_t(n_1\dots n_{t-1})
\]
\[
	j = j_1 + j_2 n_1 + i_3 (n_1n_2) + \dots + j_t(n_1\dots n_{t-1})
\]
\[
	k = k_1 + k_2 n_1 + k_3 (n_1n_2) + \dots + k_t(n_1\dots n_{t-1})
\]
and $i_s + j_s = k_s$ for all $1 \leq s \leq t$. Otherwise, we say the addition $i+j$ \emph{overflows modulo $\mfT$}.
\end{restatable}

The following is proved in \cref{polysection}.
\begin{restatable}{corollary}{explicitflagtype}
\label{explicitflagtype}
For any tower type $\mfT$ and $0 \leq i,j,i+j < n$, the following are equivalent:
\begin{enumerate}
\item $i + j$ does not overflow modulo $\mfT$;
\item $(i,j)$ is a corner of $\Len(\mfT)$ ;
\item and $T(\mfT)(i,j) = i+j$.
\end{enumerate}
\end{restatable}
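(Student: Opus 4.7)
The plan is to derive all three equivalences directly from the explicit construction of the flag $\cF(\mfT)$ in $G = C_{n_1} \times \cdots \times C_{n_t}$. The key computation I would do first is: writing $v_{i'}v_{j'} = v_{m(i',j')}$, the $s$-th mixed-radix digit of $m(i',j')$ equals $(i'_s + j'_s) \bmod n_s$, because $G$ is a direct product of the $C_{n_s}$ so the coordinates multiply independently with no carry between positions. Digit-wise this gives $m(i',j')_s \leq i'_s + j'_s$ with strict inequality precisely when $i'_s + j'_s \geq n_s$, and summing yields $m(i',j') \leq i' + j'$. Then $T(\mfT)(i,j)$ is exactly $m^*(i,j) := \max\{m(i',j') : i' \leq i,\, j' \leq j\}$, and in particular $T(\mfT)(i,j) \leq i+j$.

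For (1) $\Leftrightarrow$ (3): if $i+j$ does not overflow, then $i_s + j_s < n_s$ for every $s$, so $v_iv_j = v_{i+j}$ and combined with the upper bound this gives $T(\mfT)(i,j) = i+j$. Conversely, if $T(\mfT)(i,j) = i+j$ then some pair with $i' \leq i, j' \leq j$ must satisfy $m(i',j') = i+j$, which forces equality throughout $m(i',j') \leq i' + j' \leq i+j$: this pins $(i',j') = (i,j)$ and $i_s + j_s < n_s$ for every $s$, i.e.\ no overflow.

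For (1) $\Rightarrow$ (2): under no overflow, $T(\mfT)(i,j) = i+j$ while the general bound forces $T(\mfT)(i-1,j) \leq (i-1)+j < i+j$ and symmetrically $T(\mfT)(i,j-1) < i+j$, so $(i,j)$ is a corner.

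The main obstacle is the converse (2) $\Rightarrow$ (1), which I handle by contrapositive. Assuming overflow, some digit position $s$ has $i_s + j_s \geq n_s$, and in particular $i_s, j_s > 0$. I produce a pair $(i',j')$ strictly below $(i,j)$ with $m(i',j') = m(i,j)$ by modifying only the $s$-th digits: set $i'_s = 0$ and $j'_s = i_s + j_s - n_s$, both valid digits with $i'_s < i_s$ and $j'_s \leq j_s$ (since $i_s \leq n_s$). By construction $v_{i'}v_{j'}$ and $v_iv_j$ agree in every coordinate, so $m(i',j') = m(i,j)$. Now either $m^*(i,j) = m(i,j)$, in which case $(i',j')$ provides a second maximizer having first coordinate $< i$, or $m^*(i,j) > m(i,j)$, in which case any maximizer is already distinct from $(i,j)$ and hence $\lneq (i,j)$. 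In either case some maximizer drops strictly in the first or the second coordinate, forcing $T(\mfT)(i-1,j) = T(\mfT)(i,j)$ or $T(\mfT)(i,j-1) = T(\mfT)(i,j)$, so $(i,j)$ is not a corner.
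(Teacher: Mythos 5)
Your proof is correct and follows essentially the same route as the paper, which simply asserts the corollary ``follows from the definition of $T(\mfT)$'': you unwind that definition by computing $v_{i'}v_{j'}$ digit-wise in the lexicographic flag, obtaining $T(\mfT)(i,j)=\max\{m(i',j') : i'\le i,\ j'\le j\}\le i+j$, and your digit-modification argument for the contrapositive of (2)$\Rightarrow$(1) is a valid way to supply the detail the paper leaves implicit. The only caveat is inherited from the paper's statement rather than your argument: for $i=0$ or $j=0$ conditions (1) and (3) hold while ``corner'' is only defined for $0<i,j<n$, so the equivalence should be read for positive $i,j$, which is exactly the range your corner arguments use.
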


All corners $(i,j)$ of $T(\mfT)$ satisfy $T(\mfT)(i,j) = i + j$. Is it true that $T(i,j) = i+j$ for any corner $(i,j)$ of a realizable flag type $T$?

We prove the following theorem in \cref{cornerstructuresec}.

\begin{restatable}{theorem}{cornersineq}
\label{cornersineq}
For a flag $\cF$ and every corner $(i,j)$ of $T_{\cF}$, we have $T_{\cF}(i + j) \geq i + j$.
\end{restatable}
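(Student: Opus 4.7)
Setting $k := T_{\cF}(i,j)$, the plan is to show $\dim_K F_iF_j \geq i+j+1$ in the field case (respectively $|F_iF_j| \geq i+j+1$ in the group case); since $F_iF_j \subseteq F_k$ has dimension (cardinality) $k+1$, this yields $T_{\cF}(i,j) = k \geq i+j$. Choosing representatives $v_a \in F_a \setminus F_{a-1}$ with $v_0 = 1$, I would first exploit the corner condition: for every $(a,b)$ with $a \leq i$, $b \leq j$, $(a,b) \neq (i,j)$, monotonicity of $T_{\cF}$ forces $T_{\cF}(a,b) \leq \max\{T_{\cF}(i-1,j), T_{\cF}(i,j-1)\} < k$, so $v_av_b \in F_{k-1}$. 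In the field case this yields the structural identity
\[
F_iF_j \;=\; (F_{i-1}F_j + F_iF_{j-1}) \oplus K\,v_iv_j, \qquad v_iv_j \notin F_{k-1},
\]
with the analogous set-theoretic decomposition in the group case, so $\dim_K F_iF_j = \dim_K(F_{i-1}F_j + F_iF_{j-1}) + 1$.

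Next I would apply \cref{zemorthm} (respectively \cref{kneserorig}) directly to $F_iF_j$, giving $\dim_K F_iF_j \geq (i+1)+(j+1)-[\Stab(F_iF_j):K]$ (or $|F_iF_j| \geq i+j+2-|\Stab(F_iF_j)|$). When the stabilizer is trivial this already forces $\dim_K F_iF_j \geq i+j+1$, completing the proof.

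The hard part will be the case of nontrivial stabilizer $K' := \Stab(F_iF_j) \supsetneq K$ of degree $h := [K':K] \geq 2$. Here $F_iF_j$ is a $K'$-vector space, so $h \mid \dim_K F_iF_j$; combined with the structural identity this rules out $F_{i-1}F_j + F_iF_{j-1}$ being $K'$-stable, since its $K$-dimension differs from $\dim_K F_iF_j$ by exactly $1$. I would then apply Bachoc--Serra--Z\'emor over $K'$ to $K'F_i \cdot K'F_j = F_iF_j$, obtaining $\dim_K F_iF_j \geq \dim_K K'F_i + \dim_K K'F_j - h$; combining the divisibility $h \mid \dim_K K'F_i$ (with $\dim_K K'F_i \geq i+1$, and likewise for $j$) with the corner-induced strict inequalities $\dim_K F_iF_j > \dim_K F_{i-1}F_j$ and $\dim_K F_iF_j > \dim_K F_iF_{j-1}$ (from applying \cref{zemorthm} to those pieces) should close the bound to $\dim_K F_iF_j \geq i+j+1$. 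The delicate case analysis --- broken up by the residues of $i+1$ and $j+1$ modulo $h$, and by whether $F_{i-1}F_j$ and $F_iF_{j-1}$ themselves have trivial stabilizer --- is the technical heart of the proof; the group case proceeds in parallel via Kneser and subgroups, or reduces to the field case via \cref{groupfieldlemma}.
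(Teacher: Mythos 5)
Your overall framework is fine as far as it goes, and its easy half agrees with the paper: for a corner $(i,j)$ with $k = T_{\cF}(i,j)$, every product $v_av_b$ with $a\le i$, $b\le j$, $(a,b)\neq(i,j)$ lies in $F_{k-1}$ while $v_iv_j$ does not (this is essentially \cref{cornerreadinglemma}), so $\dim_K F_iF_j = \dim_K(F_{i-1}F_j+F_iF_{j-1})+1$; and when $\Stab(F_iF_j)$ is trivial, \cref{zemorthm} (resp.\ \cref{kneserorig}) gives $\dim_K F_iF_j \ge i+j+1$, hence $k\ge i+j$. The paper likewise reduces at once, via \cref{overflowlemma}, to the case of a nontrivial stabilizer.

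The genuine gap is that in the nontrivial-stabilizer case your listed ingredients cannot ``close the bound'' by counting, and the case analysis you defer is the entire theorem. Write $K'=\Stab(F_iF_j)$, $h=\dim_K K'$, $i=\alpha_1+\alpha_2h$, $j=\beta_1+\beta_2h$ with $0\le\alpha_1,\beta_1<h$. If $\dim_K F_iF_j\le i+j$, then \cref{overflowlemma} forces $\alpha_1+\beta_1\ge h$ together with $\dim_K K'F_i=(\alpha_2+1)h$, $\dim_K K'F_j=(\beta_2+1)h$, and $\dim_K F_iF_j=(\alpha_2+\beta_2+1)h$; in this overflow regime the Bachoc--Serra--Z\'emor bound over $K'$ is attained with equality, and these values are numerically consistent with $h\mid\dim_K F_iF_j$, with the strict containments $F_{i-1}F_j, F_iF_{j-1}\subsetneq F_iF_j$, and with your structural identity, so no contradiction follows from divisibility and dimension counts alone. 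What is needed, and what the paper actually does, is a structural contradiction with cornerness: since $\alpha_1+\beta_1\ge h$ and $\alpha_1\le h-1$ give $\beta_1\ge 1$, one gets $K'F_{j-1}=K'F_j$, and then by comparing $M'=\Stab(F_iF_{j-1})$ with $K'$ and applying \cref{zemorthm}/\cref{kneserorig} to the product $F_iF_{j-1}$ itself (not only to $F_iF_j$), one concludes $F_iF_{j-1}=F_iF_j$, i.e.\ $T_{\cF}(i,j-1)=T_{\cF}(i,j)$, contradicting that $(i,j)$ is a corner. You gesture at examining the stabilizers of $F_{i-1}F_j$ and $F_iF_{j-1}$, but that argument is never carried out, so as written the proof is incomplete precisely where the difficulty lies.
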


The reverse inequality is not true in general; there exist realizable flag types $T$ with a corner $(i,j)$ such that $T(i + j) > i + j$. We prove the theorem below in \cref{cornerstructuresec}.

\begin{restatable}{theorem}{cornersstrongineq}
\label{cornersstrongineq}
There exists a minimal flag type $T$ of degree $18$ such that
\[
	P_T \setminus \big( P_{T(2,3,3)} \cup P_{T(3,2,3)} \cup P_{T(3,3,2)}\big) \neq \emptyset
\]
and T has a corner $(i,j)$ such that $T(i + j) > i + j$.
\end{restatable}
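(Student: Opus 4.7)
The plan is to exhibit an explicit minimal flag type $T$ of degree $18$ satisfying both claimed properties. By \cref{minimalitycompletethm}, since $18$ is not among the exceptional values $p^k$, $pq$, or $12$, the set $\{T(\mfT) \colon \mfT \text{ prime of degree } 18\}$ does not exhaust the minimal flag types of degree $18$; by \cref{minimality} the three flag types $T(2,3,3)$, $T(3,2,3)$, $T(3,3,2)$ are themselves minimal, so there is at least one additional minimal flag type of degree $18$ not of the form $T(\mfT)$ for any prime $\mfT$. The task is therefore to construct one concretely.

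I would construct a candidate $T$ as $T_{\cF}$ for a carefully chosen flag $\cF$, either of an abelian group of order $18$ (such as $C_2 \times C_3 \times C_3$ or $C_2 \times C_9$) or of a field extension of degree $18$. The ordering defining $\cF$ must be chosen so that $T_{\cF}$ is not equal to, nor strictly dominates, any $T(\mfT')$ with $\mfT'$ prime; the constructions in \cref{notcompletesect} guide this selection. To verify minimality of the resulting $T$, I would check that for each corner $(i,j)$ of $T$, decreasing $T(i,j)$ yields either a non-flag-type (violating the axioms of \cref{flagtypedefn}) or a non-realizable flag type (violating \cref{overflowlemma} for a divisor of $18$, the corner inequality \cref{cornersineq}, or the realizability criteria \cref{realizabilityforgroupsthm} and \cref{realizabilityforfieldsthm}).

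For the corner property, the construction of $T$ will produce a concrete index pair $(i,j)$ where $F_i F_j$ is forced to land in some $F_k$ with $k > i+j$, yielding $T(i,j) > i+j$; if $(i,j)$ is not itself a corner, walking back along the equality locus of $T$ produces a corner $(i', j')$ with the same value $T(i', j') > i' + j'$. For the separation $P_T \not\subseteq P_{T(2,3,3)} \cup P_{T(3,2,3)} \cup P_{T(3,3,2)}$, I would use the facet description of \cref{cornerfacetthm} together with the characterization of $\Len(\mfT')$ via non-overflowing pairs from \cref{explicitflagtype}. Since the corner structure of $T$ differs from each prime case, for each prime $\mfT'$ there is a facet of $\Len(\mfT')$ that is not a facet of $P_T$; constructing $\mathbf{x} \in P_T$ that simultaneously violates one such inequality from each of the three prime polyhedra (while using the slack from the anomalous corner of $T$) gives the required separating point.

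The main obstacle is the minimality verification for the candidate $T$. This is expected to require either a delicate case analysis or a computer-assisted enumeration that leverages the structural constraints on realizable flag types established earlier in the paper.
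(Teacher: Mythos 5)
Your proposal is an outline rather than a proof: the three items that carry all the content --- the explicit flag, the proof that the resulting flag type (or one below it) is minimal, and the proof that some corner $(i,j)$ satisfies $T(i,j) > i+j$ --- are all deferred. The minimality step is the most serious problem. Minimality means that no realizable flag type lies strictly below $T$; a smaller realizable type need not be obtained from $T$ by lowering a single corner value, so your plan of checking that each one-entry decrease at a corner is non-realizable is not a valid criterion, and in any case the tools you cite (\cref{overflowlemma}, \cref{cornersineq}, \cref{realizabilityforgroupsthm}, \cref{realizabilityforfieldsthm}) are necessary conditions for realizability, not a decision procedure, so even the ``delicate case analysis or computer-assisted enumeration'' you invoke is not clearly within reach and is certainly not carried out. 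Likewise, the claim that ``the construction of $T$ will produce a concrete index pair $(i,j)$ where $F_iF_j$ is forced to land in some $F_k$ with $k > i+j$'' is exactly the nontrivial assertion of the theorem, and you give no argument for it; your observation that one can walk back to a corner once such a pair exists is correct but does not supply the pair.

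The paper avoids both difficulties by never exhibiting the minimal $T$ explicitly. It takes the realizable flag type coming from the $n=18$ instance of \cref{pqr}, together with an explicit point $\mathbf{x}$ satisfying $x_2 > 2x_1$ and $x_{15} > x_7 + x_8$ (\cref{existsapt18lemma}); any minimal flag type $T$ lying below it still has $\mathbf{x} \in P_T$, which simultaneously gives $P_T \setminus (P_{T(2,3,3)} \cup P_{T(3,2,3)} \cup P_{T(3,3,2)}) \neq \emptyset$ and forces $T(1,1) = 1$ and $T(7,8) \leq 14$. The key step is then \cref{strucof18lemma} plus the $\alpha^{-1}$ dimension count, which shows that \emph{every} realizable flag type of degree $18$ with these two properties must have $T(1,j) \geq 15$ for some $j \leq 12$; this is what produces the corner with value exceeding $i+j$. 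In other words, minimality is obtained for free by passing to a minimal element of the finite poset of realizable flag types below the constructed one, and the corner property is forced structurally for all such types rather than verified on an explicit example. Without an analogue of that structural forcing argument, your approach does not establish the theorem.
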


\subsection{Acknowledgments}
I am extremely grateful to Hendrik Lenstra for the many invaluable ideas, conversations, and corrections throughout the course of this project. I also thank Manjul Bhargava for suggesting the questions that led to this paper and for providing invaluable advice and encouragement throughout our research. Thank you as well to Jacob Tsimerman, Arul Shankar, Gilles Z\'emor, and Noga Alon for illuminating conversations. The author was supported by the NSF Graduate Research Fellowship.

\section{The polyhedron associated to a flag type $T$}
\label{polysection}
Let $T$ be any flag type of degree $n$. In this section, we will prove \cref{cornerfacetthm}.

\ptdefn*

\begin{proposition}
\label{dimprop}
The set $P_T$ is an unbounded polyhedron of dimension $n-1$.
\end{proposition}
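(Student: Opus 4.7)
The plan is to exhibit an explicit interior point of $P_T$ in $\RR^{n-1}$ and an unbounded ray from that point, from which the three assertions (polyhedron, dimension $n-1$, unbounded) all follow. That $P_T$ is a polyhedron is immediate: it is cut out of $\RR^{n-1}$ by the finitely many linear inequalities $0 \leq x_1$, $x_i \leq x_{i+1}$ for $1 \leq i \leq n-2$, and $x_{T(i,j)} \leq x_i + x_j$ for $1 \leq i,j \leq n-1$. Using axioms (2) and (3) in \cref{flagtypedefn}, we have $T(i,j) \geq \max(i,j) \geq 1$ whenever $i,j \geq 1$, so every such inequality really involves only the coordinates $x_1, \dots, x_{n-1}$.

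For full dimensionality, I would test the one-parameter family of candidate points $\mathbf{x}^{(c,\delta)} = (c + \delta,\; c + 2\delta,\; \dots,\; c + (n-1)\delta)$ with $c, \delta > 0$. Strict positivity and strict monotonicity of the coordinates are automatic. The remaining inequalities rearrange to $\delta\bigl(T(i,j) - i - j\bigr) < c$, and since $T(i,j) \leq n-1$ the left-hand side is bounded above by $\delta(n-1)$. Choosing any $c > \delta(n-1)$ therefore makes all inequalities strict simultaneously, so $\mathbf{x}^{(c,\delta)}$ lies in the interior of $P_T$, and hence $\dim P_T = n - 1$.

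For unboundedness I would verify that the all-ones vector $(1,1,\dots,1)$ is a recession direction of $P_T$: translating any point of $P_T$ by $t(1,1,\dots,1)$ preserves the ordering constraints, while in each inequality $x_{T(i,j)} \leq x_i + x_j$ the left-hand side increases by $t$ and the right-hand side by $2t$, preserving the inequality. Hence the ray $\mathbf{x}^{(c,\delta)} + t(1,\dots,1)$ for $t \geq 0$ lies entirely in $P_T$.

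There is no serious obstacle. The only content of the argument is ensuring the finitely many constraints $x_{T(i,j)} \leq x_i + x_j$ can all be made strict at once, which is afforded by the crude uniform bound $T(i,j) \leq n-1$ and is the sole reason the additive constant $c$ has to be chosen large relative to $\delta$.
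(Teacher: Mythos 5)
Your proof is correct. It takes a mildly different route from the paper's: for full-dimensionality the paper exhibits $n-1$ linearly independent points of a step shape (first $i$ coordinates equal to $1$, the rest equal to $2$), which together with the origin give $n$ affinely independent points in $P_T$, whereas you exhibit a single strictly feasible point $(c+\delta,\dots,c+(n-1)\delta)$ with $c>\delta(n-1)$, so that $P_T$ contains an open ball around it; your observation that $T(i,j)\ge\max(i,j)\ge 1$ (from symmetry and monotonicity) correctly keeps all indices in range, and the bound $T(i,j)\le n-1$ makes all constraints simultaneously strict. For unboundedness the paper simply notes that the constraints are homogeneous (so $P_T$ is a cone, and being of positive dimension it is unbounded), while you verify directly that $(1,\dots,1)$ is a recession direction. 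Both arguments are elementary; yours avoids the rank computation and makes the unboundedness step fully explicit rather than relying on the cone observation, at the cost of a small amount of bookkeeping with $c$ and $\delta$.
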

\begin{proof}
Because $P_T$ is a nonempty (it contains the origin) and is the intersection of homogeneous linear inequalities, it is an unbounded polyhedron. We will show that $P_T$ has dimension $n-1$ by producing $n-1$ linearly independent points in $P_T$. For $i \in [n-1]$, define $\mathbf{x}^i = (x_1^i,\dots,x_{n-1}^i) \in \RR^{n-1}$ by
\begin{align*}
x_1^i = \dots = x_i^i &= 1 \\
x_{i+1}^i = \dots = x_{n-1}^i &= 2
\end{align*}
It is easy to verify that $\mathbf{x}^i \in P_T$. The $(n-1)\times (n-1)$ matrix whose rows consist of the $(n-1)$-tuples $x^0,\dots,x^{n-2}$ has rank $(n-1)$; this can be seen via elementary row and column operations.
\end{proof}

\cornerfacetthm*
\begin{proof}
The first claim follows from \cref{dimprop}. We now prove the second claim. Suppose $(i,j)$ is not a corner of $T$. Then, there exists a corner $(i',j')$ with $i' \leq i$, $j' \leq j$ and $T(i',j') = T(i,j)$. We will  show that $x_{T(i,j)} \leq x_i + x_j$ does not define a facet of $P_T$ by showing that $P_T \cap \{\mathbf{x} \in \RR^{n-1} \; \vert \; x_{T(i,j)} = x_{i'} + x_{j'}\}$ strictly contains $P_T \cap \{\mathbf{x} \in \RR^{n-1} \; \vert \; x_{T(i,j)} = x_{i} + x_{j}\}$. In other words, we will produce $\mathbf{x} \in P_T$ with $x_{T(i,j)} = x_{i'} + x_{j'} \neq x_i + x_j$.

If $i' \neq j'$, set $\mathbf{x} = (x_1,\dots,x_{n-1})$ where
\begin{align*}
x_1, \dots, x_{i'} &= 2 \\
x_{i'+1}, \dots, x_{j'} &= 3 \\
x_{j'+1}, \dots, x_{T(i,j) - 1} &= 4 \\
x_{T(i,j)}, \dots, x_{n-1} &= 5.
\end{align*}
If $i' = j'$, set $\mathbf{x} = (x_1,\dots,x_{n-1})$ where
\begin{align*}
x_1, \dots, x_{i'} &= 2 \\
x_{i'+1}, \dots, x_{T(i,j) - 1} &= 3 \\
x_{T(i,j)}, \dots, x_{n-1} &= 4.
\end{align*}

A computation verifies that $x_{T(i,j)} = x_{i'} + x_{j'} \neq x_i + x_j$,   and $\mathbf{x} \in P_{T_{\cF}}$.

Therefore, $P_T$ is the set of $\mathbf{x} = (x_1,\dots,x_{n-1}) \in \RR^{n-1}$ such that $0 \leq x_1 \leq \dots \leq x_{n-1}$, and $x_{T(i,j)} \leq x_i + x_j$ for all corners $(i,j)$ of $T$. Now, suppose $(i,j)$ is a corner of $T$. To show that $x_{T(i,j)} \leq x_i + x_j$ defines a facet of $P_T$, it suffices to produce a point $\mathbf{x} = (x_1,\dots,x_{n-1})\in \RR^{n-1}$ such that $0 \leq x_1 \leq \dots \leq x_{n-1}$, and $x_{T(i,j)} > x_i + x_j$ and $x_{T(i',j')} \leq x_{i'} + x_{j'}$ for any other corner $(i',j')$.

Set $\mathbf{x} = (x_1,\dots,x_{n-1})$ where
If $i \neq j$, set $\mathbf{x} = (x_1,\dots,x_{n-1})$ where
\begin{align*}
x_1, \dots, x_{i} &= 2 \\
x_{i+1}, \dots, x_{j} &= 3 \\
x_{j+1}, \dots, x_{T(i,j) - 1} &= 4 \\
x_{T(i,j)}, \dots, x_{n-1} &= 6.
\end{align*}
If $i = j$, set $\mathbf{x} = (x_1,\dots,x_{n-1})$ where
\begin{align*}
x_1, \dots, x_{i} &= 2 \\
x_{i+1}, \dots, x_{T(i,j) - 1} &= 3 \\
x_{T(i,j)}, \dots, x_{n-1} &= 5.
\end{align*}

A computation verifies that $\mathbf{x}$ satisfies the necessary conditions.
\end{proof}

We obtain the following lemma.
\injectionlemma*
\begin{proof}
A flag type is determined by its corners, and a polyhedra is determined by its facets. Apply \cref{cornerfacetthm} to complete the proof.
\end{proof}

\explicitflagtype*
\begin{proof}
This follows from the definition of $T(\mfT)$.
\end{proof}

We may compare flag types using corners.

\begin{lemma}
\label{testflagineqlemma}
For any two flag types $T$ and $T'$ such that $T \not \geq T'$, there exists a corner $(i,j)$ of $T'$ such that $T(i,j) < T'(i,j)$.
\end{lemma}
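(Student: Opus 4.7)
The plan is to argue by descent: among all pairs witnessing $T \not\geq T'$, I will pick one minimizing $i+j$ and show it must be a corner of $T'$.

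\textbf{Setup.} Since $T \not\geq T'$, the set $S = \{(i,j) \in [n]\times[n] \mid T(i,j) < T'(i,j)\}$ is nonempty. Choose $(i,j) \in S$ with $i+j$ minimal. First I note that $i \geq 1$ and $j \geq 1$: if $i=0$, then by axiom (2) and symmetry we would have $T(0,j) = j = T'(0,j)$, contradicting $(i,j) \in S$; the case $j=0$ is symmetric. Hence $(i,j)$ is a candidate corner in the sense of \cref{cornerdef}.

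\textbf{Descent step.} Suppose for contradiction that $(i,j)$ is not a corner of $T'$. By axiom (3), $T'(i-1,j) \leq T'(i,j)$ and $T'(i,j-1) \leq T'(i,j)$, so the failure of the corner condition forces equality in at least one of these. Say $T'(i-1,j) = T'(i,j)$ (the other case is symmetric). I claim $i \geq 2$: indeed, if $i=1$ then $T'(0,j) = j$ by axiom (2), which would force $T'(1,j) = j$; but then axiom (3) applied to $T$ gives $T(1,j) \geq T(0,j) = j = T'(1,j)$, contradicting $(1,j) \in S$. Hence $i-1 \geq 1$, and we may form the pair $(i-1,j)$. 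Using monotonicity of $T$ in the first coordinate,
\[
  T(i-1,j) \leq T(i,j) < T'(i,j) = T'(i-1,j),
\]
so $(i-1,j) \in S$ with $(i-1)+j < i+j$, contradicting the minimality of $(i,j)$. The case $T'(i,j-1) = T'(i,j)$ is handled identically, using axiom (3) for the second coordinate and the observation that $j=1$ leads to the symmetric contradiction $T(i,1) \geq T(i,0) = i = T'(i,1)$.

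\textbf{Conclusion.} Therefore the minimal witness $(i,j)$ satisfies $T'(i-1,j) < T'(i,j)$ and $T'(i,j-1) < T'(i,j)$, so it is a corner of $T'$ with $T(i,j) < T'(i,j)$, as required. No step here looks delicate; the only point worth care is ruling out the boundary cases $i=1$ and $j=1$, which is what I handled above using axiom (2).
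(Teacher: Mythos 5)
Your proof is correct, but it takes a genuinely different route from the paper. The paper deduces the lemma from the polyhedral machinery: since $T \not\geq T'$ we have $P_T \not\subseteq P_{T'}$, so some facet-defining inequality of $P_{T'}$ is violated by a point of $P_T$, and \cref{cornerfacetthm} identifies that facet with a corner $(i,j)$ of $T'$; the monotonicity of coordinates of points in $P_T$ then forces $T(i,j) < T'(i,j)$. You instead argue directly from the axioms of \cref{flagtypedefn} and the definition of a corner (\cref{cornerdef}): take a witness $(i,j)$ of $T(i,j) < T'(i,j)$ with $i+j$ minimal, rule out $i=0$ or $j=0$ via axiom (2), and observe that failure of the corner condition means $T'(i-1,j) = T'(i,j)$ or $T'(i,j-1) = T'(i,j)$, which by monotonicity of $T$ transfers the strict inequality to a pair with smaller sum, contradicting minimality (your separate treatment of $i=1$, $j=1$ is fine, though the descent inequality already yields $j<j$ there). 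The only cosmetic point is that monotonicity in the second coordinate uses axiom (1) together with axiom (3), not axiom (3) alone. Your argument is more elementary and self-contained: it does not rely on \cref{cornerfacetthm} or on any polyhedral considerations, whereas the paper's proof is shorter given its framework and reinforces the duality between flag types and the polyhedra $P_T$ that the paper exploits elsewhere.
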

\begin{proof}
Because $T \not \geq T'$, we have $P_{T} \not \subseteq P_{T'}$. Thus, there exists integers $1 \leq i,j < n$ such that
\[
	P_{T'} \subset \{\mathbf{x} = (x_1,\dots,x_{n-1}) \in \RR^{n-1} \; \mid \; x_{T'(i,j)} \leq x_i + x_j\},
\]
\[
	P_{T'} \not\subset \{\mathbf{x} = (x_1,\dots,x_{n-1}) \in \RR^{n-1} \; \mid \; x_{T'(i,j)} \leq x_i + x_j\},
\]
and the linear inequality $ x_{T'(i,j)} \leq x_i + x_j$ defines a facet of $P_{T'}$. Thus \cref{cornerfacetthm}, there exists a corner $(i,j)$ of $T'$ such that $P_T \cap \{\mathbf{x} \in \RR^{n-1} \; \mid \; x_{T'(i,j)} > x_i + x_j\} \neq \emptyset$. Because $P_T \subseteq \{\mathbf{x} \in \RR^{n-1} \; \mid \; x_{T(i,j)} \leq x_i + x_j\}$, we have $T(i,j) < T'(i,j)$.
\end{proof}

\section{Realizability and minimality of $T(\mfT)$}
\label{realizabilitysection}
For every flag $\cF$ of a finite abelian group, there exists a sequence $\{v_0,\dots,v_{n-1}\}$ such that $F_i = \{v_0,\dots,v_i\}$. Similarly, for a flag $\cF$ of an field extension $L/K$, there exists a sequence $\{v_0,\dots,v_{n-1}\}$ such that $F_i = K\la v_0,\dots,v_i\ra$.

\begin{lemma}
If $T_{\cF}(i,j) = j$ then $F_i \subseteq \Stab(F_j)$.
\end{lemma}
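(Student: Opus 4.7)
The plan is to simply unpack the definition of $T_{\cF}$ and use a standard cardinality / dimension argument to upgrade an inclusion to an equality. By definition, $T_{\cF}(i,j)$ is the smallest $k$ with $F_iF_j \subseteq F_k$, so the hypothesis $T_{\cF}(i,j) = j$ is equivalent to $F_iF_j \subseteq F_j$. In particular, for every $a \in F_i$ we get $aF_j \subseteq F_j$. The remaining task is to show that this inclusion is actually an equality, which is exactly what membership of $a$ in $\Stab(F_j)$ requires.

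For the abelian group case, I would fix $a \in F_i$ and note that left multiplication by $a$ is a bijection of $G$ (since $G$ is a group). Thus $aF_j$ and $F_j$ are finite subsets of $G$ of the same cardinality $j+1$, and $aF_j \subseteq F_j$ forces $aF_j = F_j$. For the field extension case, I would again fix $a \in F_i$ (which is nonzero for $a \neq 0$; the case $a=0$ is trivially in $\Stab(F_j)$ since $F_j$ is a $K$-subspace). Multiplication by $a$ is $K$-linear and injective on $L$, so $aF_j$ is a $K$-subspace of $F_j$ of the same $K$-dimension, hence $aF_j = F_j$. In either setting, this means $a \in \Stab(F_j)$, giving $F_i \subseteq \Stab(F_j)$.

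There is essentially no obstacle here; the lemma is a warm-up unpacking of definitions. The only minor point worth flagging is that we implicitly use $F_i \subseteq F_j$ (so that the containment makes sense for the ambient objects), but this follows automatically because axioms (2) and (3) for a flag type force $T_{\cF}(i,j) \geq \max(i,j)$, so the hypothesis $T_{\cF}(i,j)=j$ already implies $i \leq j$. The essential content is just that finite sets (respectively, finite-dimensional $K$-subspaces) do not admit strict self-injections by multiplication by a group element (respectively, a nonzero field element).
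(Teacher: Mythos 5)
Your proof is correct and follows essentially the same route as the paper, whose entire argument is the one-liner $F_j \subseteq F_iF_j \subseteq F_j$, hence $F_iF_j = F_j$. You simply make explicit the (implicit in the paper) final step that multiplication by a fixed element is injective, so $aF_j \subseteq F_j$ forces $aF_j = F_j$ by cardinality (resp.\ dimension), which is a fine elaboration rather than a different approach.
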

\begin{proof}
If $T_{\cF}(i,j) = j$ then $F_j \subseteq F_iF_j \subseteq F_j$ so $F_iF_j = F_j$.
\end{proof}

\begin{lemma}
\label{cornerreadinglemma}
If $\cF$ is a flag of a finite abelian group and $(i,j)$ is a corner of $T_{\cF}$, then $v_iv_j = v_{T_{\cF}(i,j)}$. If $\cF$ is a flag of a field extension and $(i,j)$ is a corner of $T_{\cF}$, then $v_iv_j \in F_{T_{\cF}(i,j)}\setminus F_{T_{\cF}(i,j)-1}$.
\end{lemma}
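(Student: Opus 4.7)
The plan is to handle both cases by the same observation: since $(i,j)$ is a corner, all generators of $F_iF_j$ coming from ``strictly smaller'' indices already lie in $F_{T_{\cF}(i,j)-1}$, so the element $v_iv_j$ is forced to pick up the slack. Write $k = T_{\cF}(i,j)$. By minimality in the definition of $T_{\cF}$, we have $F_iF_j \subseteq F_k$ but $F_iF_j \not\subseteq F_{k-1}$. The corner hypothesis gives $T_{\cF}(i-1,j) \leq k-1$ and $T_{\cF}(i,j-1) \leq k-1$, so
\[
    F_{i-1}F_j \subseteq F_{k-1} \quad\text{and}\quad F_iF_{j-1} \subseteq F_{k-1}.
\]

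For the group case, first note that $F_iF_j = (F_{i-1}F_j) \cup (F_iF_{j-1}) \cup \{v_iv_j\}$ as sets, since every $a \in F_i$ is either in $F_{i-1}$ or equals $v_i$, and analogously for $b \in F_j$. If $v_iv_j \in F_{k-1}$, then the displayed inclusions would force $F_iF_j \subseteq F_{k-1}$, contradicting $T_{\cF}(i,j) = k$. Hence $v_iv_j \in F_k \setminus F_{k-1}$; since $|F_k \setminus F_{k-1}| = 1$ with unique element $v_k$, we conclude $v_iv_j = v_k$.

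For the field case, the same decomposition holds on the level of $K$-subspaces:
\[
    F_iF_j = F_{i-1}F_j + F_iF_{j-1} + Kv_iv_j,
\]
because $F_i = F_{i-1} + Kv_i$ and $F_j = F_{j-1} + Kv_j$. Again, if $v_iv_j$ lay in $F_{k-1}$, then all three summands on the right would be contained in $F_{k-1}$, contradicting $F_iF_j \not\subseteq F_{k-1}$. Therefore $v_iv_j \in F_k \setminus F_{k-1}$, as required.

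I expect no significant obstacle: the only subtle point is recognizing that the corner condition is exactly what is needed to ensure that every ``non-corner generator'' $v_av_b$ with $a < i$ or $b < j$ already sits in $F_{k-1}$, isolating the contribution of $v_iv_j$. Once this is observed, the group and field statements follow by identical arguments, differing only in whether one works with set-theoretic unions or $K$-linear spans.
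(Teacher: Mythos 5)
Your proof is correct and follows essentially the same route as the paper: both arguments rest on the decomposition of $F_iF_j$ into $F_{i-1}F_j$, $F_iF_{j-1}$, and the single extra product $v_iv_j$, together with the corner condition pushing the first two pieces into $F_{T_{\cF}(i,j)-1}$. The paper phrases it by observing $v_{T_{\cF}(i,j)} \in F_iF_j \setminus (F_{i-1}F_j \cup F_iF_{j-1}) = \{v_iv_j\}$, while you argue the contrapositive (if $v_iv_j$ were in $F_{T_{\cF}(i,j)-1}$ then all of $F_iF_j$ would be), which is the same idea.
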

\begin{proof}
Suppose $\cF$ is a flag of an abelian group. We have that $T_{\cF}(i,j) = \max\{k \in [n] \mid v_k \in F_iF_j\}$. Thus, $v_{T_{\cF}(i,j)} \in F_iF_j\setminus (F_{i-1}F_j \cup F_iF_{j-1}) = \{v_iv_j\}$ so $v_{T_{\cF}(i,j)} = v_iv_j$. The proof in the case of field extensions is entirely analogous.
\end{proof}

\realizabilityforgroupsthm*
\begin{proof}
First suppose $G$ has such a filtration and let $e_i \in G_i$ be a lift of a generator of $G_i/G_{i-1}$ for $i = 1,\dots,t$. Write $i = i_1 + i_2(n_1) + i_3(n_1n_2) + \dots + i_t(n_1\dots n_{t-1})$ in mixed radix notation, and define a sequence $\{v_0,\dots,v_{n-1}\}$ by $v_i = e_1^{i_1}\dots e_t^{i_t}$ for $i \in [n]$. Define a flag $\cF = \{F_i\}_{i\in [n]}$ by $F_i = \{v_0,\dots,v_i\}$. Observe that $T_{\cF}$ equals $T(\mfT)$.

Now suppose that $\cF$ is a flag such that $T_{\cF} = T(n_1,\dots,n_t)$, corresponding to a sequence $\{v_0,\dots,v_{n-1}\}$. Now, for all $i = 1,\dots,t$, we have $T_{\cF}(n_1\dots n_i - 1, n_1\dots n_i - 1) = n_1\dots n_i - 1$, so the $F_{n_1\dots n_i - 1}$ must be a group; call it $G_i$. So, we have a filtration
\[
	\{1\} = G_0 \subset G_1 \subset \dots \subset G_t = G
\]
such that $\lv G_i/G_{i-1} \rv = n_i$ for all $1 \leq i \leq t$. Now for $1 \leq i \leq t$ let $w_{i} = v_{n_1\dots n_{i-1}}$; we claim $w_i$ is a cyclic generator of $G_i/G_{i-1}$. If $n_i = 2$, this is trivial so assume $n_i \neq 2$.

For $1 \leq j < n_i$, the addition $(n_1\dots n_{i-1}) + (j-1)(n_1\dots n_{i-1})$ does not overflow modulo $(n_1,\dots,n_t)$ so $(n_1\dots n_{i-1}, (j-1)n_1\dots n_{i-1})$ is a corner of $T_{\cF}$ by \cref{cornerfacetthm}. Hence by \cref{cornerreadinglemma}, we see that
\[
	v_{jn_1\dots n_{i-1}} = v_{n_1\dots n_{i-1}}v_{(j-1)n_1\dots n_{i-1}}.
\]
By induction, 
\[
	v_{jn_1\dots n_{i-1}} = w_i^j.
\]
Therefore $w_i^j$ is distinct for $1 \leq j < n_i$ so $w_i$ is a cyclic generator of the group $G_i/G_{i-1}$ of cardinality $n_i$.
\end{proof}

\realizabilityforfieldsthm*
\begin{proof}
The proof is entirely analogous to the proof of \cref{realizabilityforgroupsthm}.
\end{proof}

\minimality*
\begin{proof}
Let $\mfT = (n_1,\dots,n_t)$ be a tower type where $n_i = m_1m_2$ for $m_1$ and $m_2$ positive integers. Then $\Len(\mfT)$ is strictly contained in $\Len(n_1,\dots,n_{i-1},m_1,m_2,n_{i+1},\dots,n_t)$ by \cref{cornerfacetthm}. Thus, 
\[
	T(n_1,\dots,n_{i-1},m_1,m_2,n_{i+1},\dots,n_t) \leq T(\mfT).
\]
So if $\mfT$ is not prime, the flag type $T(\mfT)$ not minimal.  

We will now prove that if $\mfT$ is prime, the flag type $T(\mfT)$ is minimal. We give the proof for finite abelian groups; the proof for field extensions is entirely analogous. Write $\mfT = (p_1,\dots,p_t)$.

Choose a flag $\cF$ of $G$ such that $T_{\cF} \leq T(\mfT)$. We will prove that $T_{\cF} = T(\mfT)$. Let $\{v_0,\dots,v_{n-1}\}$ be a sequence of elements of $G$ such that $F_i = \{v_0,\dots,v_i\}$. For all $1 \leq i \leq t$, we have 
\[
	p_1\dots p_i - 1 \leq T_{\cF}(p_1\dots p_i - 1, p_1\dots p_i - 1) \leq T(\mfT)(p_1\dots p_i - 1, p_1\dots p_i - 1) = p_1\dots p_i - 1.
\]
Thus, $p_1\dots p_i - 1 = T_{\cF}(p_1\dots p_i - 1, p_1\dots p_i - 1)$. Therefore the set $\{v_0,\dots,v_{p_1\dots p_i - 1}\}$ form a subgroup of $G$; call this subgroup $G_i$. 

Moreover, let $w_i = v_{n_1\dots n_{i-1}}$. Write $k \in [n]$ in mixed radix notation with respect to $(p_1,\dots,p_t)$ as 
\[
	k = k_1 + k_2(p_1) + \dots + k_t(p_1\dots p_t).
\]
We will inductively show that $v_k = w_1^{k_1}\dots w_t^{k_t}$, which implies that $T_{\cF} = T(\mfT)$. For $k = 0$ or $k = p_1\dots p_i$ for $1\leq i < t$, this is obvious from the definition. For any other $k \in [n]$ there exists a corner $(i,j)$ of $T(\mfT)$ with $T(\mfT)(i,j) = i+j = k$ by \cref{explicitflagtype}. Because $T_{\cF} \leq T(\mfT)$ and $\lv F_i F_j \rv = i+j$ by induction, we must have $v_k = v_iv_j$. 
\end{proof}

\section{Some useful lemmas}

The following lemmas will be necessary to prove our main theorems. Recall the following definition.

\overflowdefn*

\begin{definition}
For any integer $1 < m < n$ such that $m \mid n$, we say the addition $i + j$ \emph{overflows modulo $m$} if the addition $i + j$ overflows modulo $(m,n/m)$. Otherwise, we say the addition $i + j$ \emph{does not overflow modulo $m$}.
\end{definition}

Recall that for a subset $S$ of a finite abelian group, $\lv S \rv$ denotes the cardinality $S$. For a $K$-vector space $S$ in a field extension $L/K$,  let $\lv S \rv$ denote $\dim_K S$.

\begin{lemma}[\cite{kneser}, \cite{zemor} Theorem 2]
\label{overflowlemma}
Let $G$ be an abelian group of cardinality $n$. Suppose we have $i,j \in [n]$ and two subsets $I$ and $J$ of cardinality $i + 1$ and $j + 1$ respectively. Suppose $\lv IJ \rv \leq i+j$. Set $m = \lvert \Stab(F_i F_j) \rvert$ and write $i$ and $j$ in mixed radix notation with respect to $(m, n/m)$ as
\begin{align*}
i &= i_1 + i_2 m \\
j &= j_1 + j_2 m.
\end{align*}

Then $m > 1$, the addition $i + j$ overflows modulo $m$, 
\[
	\lv \Stab(IJ)I\rv = (i_2 + 1)m,
\]
\[
	\lv \Stab(IJ)J \rv = (j_2 + 1)m,
\]
\[
	\lv IJ \rv = (i_2 + j_2 + 1)m.
\]

Also, if $L/K$ is a degree $n$ field extension and $I,J \subseteq L$ are $K$-subvector spaces of dimension $i + 1$ and $j + 1$ respectively such that $\dim_K IJ \leq i+j$, then the same result holds, where here $m = \dim_k \Stab(IJ)$.  
\end{lemma}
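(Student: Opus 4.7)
The plan is to apply \cref{kneserorig} twice: first to the pair $(I,J)$ directly, and then to their $H$-saturations, where $H = \Stab(IJ)$ has cardinality $m$. The first application yields $|IJ| \geq (i+1)+(j+1)-m = i+j+2-m$, and combining with the hypothesis $|IJ| \leq i+j$ forces $m \geq 2$, giving $m > 1$. Since $H$ stabilizes $IJ$, each of $IJ$, $HI$, and $HJ$ is a union of $H$-cosets, so all three cardinalities are divisible by $m$. From $I \subseteq HI$ and $|I| = i+1 = (i_1+1) + i_2 m$ with $1 \leq i_1 + 1 \leq m$, the smallest multiple of $m$ that is $\geq |I|$ equals $(i_2+1)m$, so $|HI| \geq (i_2+1)m$; similarly $|HJ| \geq (j_2+1)m$.

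Next, I would observe that $HI \cdot HJ = H(IJ) = IJ$, so $\Stab(HI \cdot HJ) = H$. Applying \cref{kneserorig} to the pair $(HI, HJ)$ therefore yields the strong inequality
\[
|IJ| \geq |HI| + |HJ| - m \geq (i_2 + j_2 + 1)m.
\]
Combining with the hypothesis $|IJ| \leq i+j = i_1+j_1+(i_2+j_2)m$ together with $i_1, j_1 < m$ (so that $|IJ| < (i_2+j_2+2)m$), the divisibility of $|IJ|$ by $m$ pins down $|IJ| = (i_2+j_2+1)m$. This in particular requires $i_1+j_1 \geq m$, which is precisely the overflow condition modulo $m$. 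Equality in the Kneser bound then propagates backward to force $|HI| = (i_2+1)m$ and $|HJ| = (j_2+1)m$. The field extension statement follows by the same argument with \cref{zemorthm} replacing \cref{kneserorig}; there $H$ is a $K$-subfield of $L$ of degree $m$, the sets $HI$ and $HJ$ are $H$-modules inside $L$, and $K$-dimensions replace cardinalities throughout.

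The whole argument is essentially mechanical once the key maneuver is in place, namely saturating $I$ and $J$ by $H$ and re-applying \cref{kneserorig}, which effectively upgrades the weak form of Kneser's theorem stated above to the strong form $|IJ| \geq |HI|+|HJ|-m$. The only delicate point is the identity $\Stab(HI \cdot HJ) = H$, without which the second application of Kneser would supply an unhelpful (larger) stabilizer and no useful conclusion; this is immediate from $HI \cdot HJ = H(IJ) = IJ$, so I expect no serious obstacle beyond careful bookkeeping in the mixed-radix arithmetic.
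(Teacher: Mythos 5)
Your proposal is correct and follows essentially the same route as the paper: saturate $I$ and $J$ by $H=\Stab(IJ)$, use $IJ=(HI)(HJ)$ with $\Stab((HI)(HJ))=H$ to apply Kneser to the saturated pair, and then use divisibility of $\lv HI\rv$, $\lv HJ\rv$, $\lv IJ\rv$ by $m$ together with $i_1+j_1<2m$ to pin down the stated equalities and the overflow $i_1+j_1\geq m$. The only cosmetic difference is your separate initial application of Kneser to $(I,J)$ to get $m>1$, which the paper leaves implicit in the same chain of inequalities.
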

\begin{proof}
We prove the lemma for groups. The proof for vector spaces is entirely analogous if one substitutes \cref{zemorthm} for \cref{kneserorig}. Observe that $IJ = (\Stab(IJ)I)(\Stab(IJ)J)$. Apply \cref{kneserorig} to obtain that
\begin{align*}
i_1 + i_2 m + j_1 + j_2 m &= i+j \\
&\geq \lv IJ \rv \\
&=  \lv (\Stab(IJ)I)(\Stab(IJ)J) \rv \\
&\geq \lv \Stab(IJ)I \rv + \lv \Stab(IJ)J \rv - m \\
&\geq (i_2 + 1)m + (j_2 + 1)m - m \\
&\geq i_2 m + j_2 m + m
\end{align*}
and thus $i_1 + j_1 \geq m$, so the addition $i + j$ overflows modulo $m$. Then $\Stab(IJ)I$, $\Stab(IJ)J$ and $IJ$ are unions of cosets of the group $\Stab(IJ)$. Now because $i_1 + j_1 < 2m$, we must have $\lv \Stab(IJ)I \rv = (i_2 + 1)m$, $\lv \Stab(IJ)J \rv = (j_2 + 1)m$, and $\lv IJ \rv = (i_2 + j_2 + 1)m$.
\end{proof}

\begin{proposition}
There exists flag types that are not realizable.
\label{unrealizableflagtype}
\end{proposition}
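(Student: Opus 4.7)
My plan is to exhibit an explicit flag type of prime degree whose non-realizability is forced by \cref{overflowlemma}. Primality is the key feature: for $n = p$ a prime, the only divisors of $n$ greater than $1$ are $p$ itself, so the stabilizer cardinality $m > 1$ extracted from the overflow lemma has essentially no room to live.

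For concreteness, I would take $n = 5$ and define $T \colon [5] \times [5] \to [5]$ by $T(0, k) = T(k, 0) = k$, $T(1, 1) = 1$, and $T(i, j) = \min(i + j, 4)$ for all remaining pairs $(i, j)$ with $i, j \geq 1$. First I would verify the three conditions in \cref{flagtypedefn}: symmetry and the boundary condition $T(i, 0) = i$ are immediate, and the only non-trivial monotonicity check, $T(1, 1) = 1 \leq T(2, 1) = 3$, is clear; for $j \geq 2$ monotonicity reduces to $\min(1 + j, 4) \leq \min(2 + j, 4)$.

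Next I would suppose, for contradiction, that $T$ is realized by a flag $\cF = \{F_i\}_{i \in [5]}$ of either an abelian group of order $5$ or a degree $5$ field extension. Since $T(1, 1) = 1$, we have $F_1 F_1 \subseteq F_1$, so $\lv F_1 F_1 \rv \leq 2 = 1 + 1$, and \cref{overflowlemma} applies with $I = J = F_1$ and $i = j = 1$. The lemma produces $m = \lv \Stab(F_1 F_1) \rv > 1$; because $\Stab(F_1 F_1)$ is a subgroup of $G$ in the group case and a subfield of $L$ containing $K$ in the field case, one has $m \mid n = 5$. In the mixed radix expansion of $1$ with respect to $(m, n/m)$ we have $i_2 = j_2 = 0$ since $1 < m$, so the lemma's formula gives $\lv F_1 F_1 \rv = (0 + 0 + 1) m = m$. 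Combined with $\lv F_1 F_1 \rv \leq 2$ this forces $m \leq 2$, hence $m = 2$, contradicting $m \mid 5$.

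I expect the main content to be this verification; the only potentially delicate point is the bookkeeping around the conclusion of \cref{overflowlemma}, specifically checking $m \mid n$ (via Lagrange in the group case, and via the fact that a finite-dimensional $K$-subalgebra of a field is itself a subfield in the field case). The primality of $n$ is essential to close the argument, and the same construction works verbatim for every odd prime $n$.
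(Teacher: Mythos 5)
Your proof is correct, but it takes a different route from the paper's. The paper also argues by exhibiting an explicit counterexample, but at the composite degree $n=6$: it takes a flag type with \emph{two} constraints, $T(1,1)=1$ and $T(2,2)=2$, so that any realizing flag would force $F_1$ to be a subgroup of order $2$ (resp.\ a quadratic subfield) and $F_2$ a subgroup of order $3$ (resp.\ a cubic subfield), and the contradiction comes from the containment $F_1\subset F_2$ of substructures of incompatible sizes. Your counterexample instead lives at prime degree $n=5$ and needs only the single constraint $T(1,1)=1$; the contradiction is pure divisibility (Lagrange, or the tower law), since a group of order $5$ or a quintic extension has no proper nontrivial substructure at all. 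Both are valid; yours is arguably leaner in that one constraint suffices, while the paper's example illustrates the subtler phenomenon that constraints which are individually realizable at degree $6$ (each of $T(1,1)=1$ and $T(2,2)=2$ occurs for some flag of $C_6$) can be jointly obstructed. One stylistic remark: your detour through \cref{overflowlemma} is heavier than necessary --- from $T(1,1)=1$ one gets $F_1F_1=F_1$ directly, so $F_1$ is a subgroup of order $2$ (resp.\ a two-dimensional $K$-subalgebra of $L$, hence a quadratic subfield), and $2\nmid 5$ finishes the argument; this is exactly how the paper extracts its substructures. Your bookkeeping with the lemma is nonetheless correct (in particular you rightly note that $m=\lv\Stab(F_1F_1)\rv$ divides $n$ in both the group and field cases, which is what makes the mixed radix expansion with respect to $(m,n/m)$ legitimate), and your verification that the proposed $T$ satisfies \cref{flagtypedefn} is adequate.
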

\begin{proof}
Let $n = 6$ and let $T$ be a flag type such that $T(1,1) = 1$ and $T(2,2) = 2$. It is not difficult to verify that such a flag type exists. Suppose $T$ is realized by a flag $\cF = \{F_i\}_{i \in [n]}$ of a finite abelian group; then $F_1$ must be a subgroup of order $2$ because $T(1,1) = 1$, and $F_3$ must be a subgroup of order $3$ because $T(2,2) = 2$. However, $F_2 \subseteq F_3$, which is a contradiction. An analogous argument shows that $T$ is not realized by a flag $\cF$ of a field extension.
\end{proof}

\begin{lemma}
\label{groupfieldlemma}
Let $G = C_{q_1}\times \dots \times C_{q_k}$ be the finite abelian group of order $n$ where $C_{q_i}$ is the cyclic group of order $q_i$ and $q_i$ is a prime power for all $i$. Let $L/K$ be a degree $n$ field extension containing subextensions $L_i/K$ such that the extension $L_i/K$ has a primitive element $\alpha_i$ with $\alpha_i^{q_i} \in K$, we have $\deg(L_i/K) = q_i$, and $\prod_i L_i = L$. Then every flag type realizable for $G$ is realizable for $L/K$.
\end{lemma}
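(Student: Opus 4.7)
The plan is to build a set-theoretic map $\phi \colon G \to L$ that sends $G$ bijectively onto a $K$-basis of $L$ and intertwines multiplication in $G$ with multiplication in $L$ up to scalars in $K^\times$; then any flag of $G$ transports through $\phi$ to a flag of $L/K$ with the same flag type. Concretely, I would fix a generator $e_i$ of each cyclic factor $C_{q_i}$ and define
\[
    \phi(e_1^{a_1}\cdots e_k^{a_k}) = \alpha_1^{a_1}\cdots \alpha_k^{a_k}, \qquad 0 \leq a_i < q_i.
\]

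Two facts about $\phi$ would drive the argument. First, $\phi(G)$ is a $K$-basis of $L$: since each $L_i/K$ has $K$-basis $\{1,\alpha_i,\dots,\alpha_i^{q_i-1}\}$ and $L = \prod_i L_i$ is their compositum, the set $\phi(G) = \{\alpha_1^{a_1}\cdots \alpha_k^{a_k} : 0 \leq a_i < q_i\}$ spans $L$ over $K$, and the hypothesis $[L \colon K]=n=\prod_i q_i$ forces a spanning set of cardinality $n$ to be a basis; in particular $\phi$ is injective. Second, for all $a,b \in G$ there exists $c(a,b) \in K^\times$ with $\phi(a)\phi(b) = c(a,b)\,\phi(ab)$, because $\alpha_i^{q_i} \in K^\times$ lets one reduce each exponent $a_i + b_i$ modulo $q_i$ at the cost of multiplying by a power of $\alpha_i^{q_i} \in K^\times$.

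Given these two facts, the remainder is formal bookkeeping. For any flag $\cF = \{F_i\}_{i \in [n]}$ of $G$, I would define $F'_i = K\la \phi(F_i)\ra$. Injectivity of $\phi$ together with the basis property of $\phi(G)$ give $\dim_K F'_i = \lv F_i \rv = i+1$, so $\cF' = \{F'_i\}_{i \in [n]}$ is a flag of $L$ over $K$. The product $F'_i F'_j$ is the $K$-span of $\{\phi(a)\phi(b) : a \in F_i,\, b \in F_j\}$, which by the multiplicative identity equals the $K$-span of $\phi(F_iF_j)$. Since $\phi(G)$ is a basis and $\phi$ is injective, $F'_i F'_j \subseteq F'_k$ if and only if $F_iF_j \subseteq F_k$; taking minima yields $T_{\cF'}(i,j) = T_{\cF}(i,j)$, which is exactly what is needed.

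I expect the only nontrivial step to be the verification that $\phi(G)$ is a $K$-basis of $L$; this is the one place where the field-theoretic hypotheses — the compositum condition $\prod_i L_i = L$ and the degree equalities $\deg(L_i/K) = q_i$ — are used essentially. Even here the proof is short: it is a dimension count once spanning is established, and spanning follows formally from the definition of the compositum together with the explicit power bases of the $L_i/K$.
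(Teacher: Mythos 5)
Your proposal is correct and follows essentially the same route as the paper: the paper likewise sends $v_i = e_1^{i_1}\cdots e_k^{i_k}$ to $v_i' = \alpha_1^{i_1}\cdots \alpha_k^{i_k}$ and transports the flag, leaving the verification that $T_{\cF} = T_{\cF'}$ as ``a computation.'' Your two key facts --- that the monomials $\alpha_1^{a_1}\cdots\alpha_k^{a_k}$ form a $K$-basis of $L$ and that $\phi(a)\phi(b) = c(a,b)\,\phi(ab)$ with $c(a,b) \in K^\times$ --- are exactly the content of that computation, so your write-up is a correct (and more detailed) version of the paper's argument.
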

\begin{proof}
Let $e_1$ be a generator of $C_{q_i}$. For any flag $\cF = \{F_i\}_{i \in [n]}$ of $G$, there is a sequence $\{1=v_0,\dots,v_{n-1}\}$ where $F_i = \{v_0,\dots,v_i\}$. For $i \in [n]$, let $i_1,\dots,i_k$ be integers such that $v_i = e_1^{i_1}\dots e_k^{i_k}$. Let $\alpha_i$ be a primitive element of $L_i/K$ such that $\alpha_i^{q_i} \in K$. Let $v_i' = \alpha_1^{i_1}\dots \alpha_k^{i_k}$ and let $\cF' = \{F'_i\}_{i \in [n]}$ be the flag of $L/K$ defined by $F_i' = K\la v_0',\dots,v_i'\ra$. Then a computation shows that $T_{\cF} = T_{\cF'}$.
\end{proof}

\section{Proof that the flag types $T(\mfT)$ for prime $\mfT$ form the set of minimal flag types for certain $n$}
\label{completesect}
Our purpose in the following two sections will be to prove the following two theorems.

\minimalitycompletethm*
\begin{proof}
Combine \cref{completethm}, \cref{notcompletethm}, and \cref{minimality}.
\end{proof}

\minimalitycompletethmbetter*
\begin{proof}
Combine \cref{completethm} and \cref{notcompletethm}.
\end{proof}

We prove \cref{completethm} in \cref{completesect} and we prove \cref{notcompletethm} in \cref{notcompletesect}.

For simplicity of exposition, we assume in this section that all flags are flags of abelian groups; the proofs for field extensions is entirely analogous. Simply replace the word ``cardinality'' with ``dimension'', the word ``group'' with ``field'', the word ``order'' (of an element in the group) with ``degree'' (of an element in the field extension), and the phrase ``union of cosets'' with ``vector space''. Moreover, replace any use of \cref{kneserorig} with \cref{zemorthm}.

\begin{theorem}
\label{completethm}
Suppose $n$ is of the following form:
\begin{enumerate}
\item $n = p^k$, with $p$ prime and $k \geq 1$;
\item $n = pq$, with $p$ and $q$ distinct primes;
\item or $n = 12$.
\end{enumerate}

Then, for every realizable flag type $T$, there exists a flag type $T(p_1,\dots,p_t)$ such that $T \geq T(p_1,\dots,p_t)$.
\end{theorem}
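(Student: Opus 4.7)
By \cref{testflagineqlemma} combined with \cref{explicitflagtype}, the statement is equivalent to the following: for every realizable flag type $T$ of degree $n$, there exists a prime tower type $\mfT$ of degree $n$ such that $T(i,j) \geq i + j$ at every corner $(i,j)$ of $T(\mfT)$. Here one uses that the corners of $T(\mfT)$ for a prime tower type $\mfT$ always satisfy $i + j \leq n - 1$ (summing the maximal carry-free mixed-radix digits telescopes to $n - 1$), so the value $T(\mfT)(i,j) = i + j$ at every corner. The proof then proceeds case-by-case on $n$.

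For $n = p^k$ the only candidate is $\mfT = (p,\ldots,p)$, and the argument is essentially immediate from \cref{overflowlemma}: a corner of $T(\mfT)$ is a pair $(i,j)$ with carry-free base-$p$ addition, which implies that $i + j$ does not overflow modulo $p^s$ for any $s \geq 1$. Since every divisor $m$ of $p^k$ with $1 < m < n$ equals some $p^s$, \cref{overflowlemma} forces $T(i,j) \geq i + j$, and no case analysis on the flag is needed.

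For $n = pq$ the candidates are $\mfT = (p,q)$ and $\mfT = (q,p)$. I would analyze the flag $\cF$ realizing $T$ via the chain of subgroups $G_k = \la F_k \ra$ of $G = C_{pq}$, whose possible tower types are $(p,q)$, $(q,p)$, and $(pq)$. In the first two cases, the intermediate subgroup $H_p$ or $H_q$ lies in the flag, and a corner-by-corner check shows that at every corner $(i,j)$ of the matching $T(\mfT)$ the Kneser stabilizer $\Stab(F_iF_j)$ cannot equal $H_p$ (ruled out by the corner mixed-radix condition $(i \bmod p) + (j \bmod p) < p$); the remaining alternative $\Stab(F_iF_j) = H_q$ combined with $H_p \subseteq F_iF_j$ (which follows from $F_{p-1} = H_p$) forces $F_iF_j \supseteq H_p H_q = G$, so $T(i,j) = n - 1 \geq i + j$. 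In the tower $(pq)$ case, $v_1$ generates $G$, so $\pi_q(v_0)$ and $\pi_q(v_1)$ are already two distinct elements of $G/H_q \cong C_p$; combined with the identity $|\pi_q(F_i)| = i_2' + 1$ from \cref{overflowlemma} and Cauchy--Davenport in $G/H_q$, this rules out Kneser-tight stabilizer $H_q$ at every corner, and symmetrically for $H_p$, so $T(i,j) \geq i + j$ for both towers simultaneously.

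For $n = 12$ the candidates are $\mfT = (2,2,3), (2,3,2), (3,2,2)$, and the abelian groups are $C_{12}$ and $C_2 \times C_2 \times C_3$. I would classify again by the tower type $\mfT'$ of the filtration $G_k = \la F_k \ra$: every such $\mfT'$ refines to at least one of the three prime towers by elementary group theory, and by the refinement monotonicity used in the proof of \cref{minimality}, $T(\mfT) \leq T(\mfT')$ for the prime refinement $\mfT$. So it suffices to show $T_{\cF} \geq T(\mfT')$, which I would reduce to a corner-by-corner Kneser analysis as in the $pq$ case, now handling intermediate quotients of order $4$ or $6$. I expect this to be the main technical obstacle: these intermediate subgroups allow for richer Kneser overflow configurations than in the prime-power or $pq$ settings, and one must verify explicitly that the three prime towers $(2,2,3)$, $(2,3,2)$, $(3,2,2)$ actually suffice (a fourth candidate like a non-prime refinement is not needed), which requires a careful finite check across the possible flags on both abelian groups of order $12$.
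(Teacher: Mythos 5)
Your reduction via \cref{testflagineqlemma} and your $n=p^k$ case are correct and coincide with the paper's \cref{primepower}. The gap is in the $n=pq$ case (and the $n=12$ case, which you build on the same idea and in any event do not carry out, deferring it to ``a careful finite check''). You classify flags by the tower type of the chain $G_k=\la F_k\ra$ and claim that a flag whose chain has tower type $(p,q)$ satisfies $T_{\cF}\geq T(p,q)$, justified by ``$F_{p-1}=H_p$''. But tower type $(p,q)$ only says that \emph{some} $F_k$ generates $H_p$; it does not force $H_p$ to be a member of the flag, and in fact the generated-subgroup tower does not determine which prime tower dominates. Concretely, let $G=C_{35}=\la a\ra\times\la b\ra$ with $\lvert a\rvert=5$, $\lvert b\rvert=7$, and take $v_0=1$, $v_1=a$, $v_2=b$, then list the remaining elements of $H_7\cup aH_7$ so that $F_{13}=H_7\cup aH_7$, followed by the elements of $a^2H_7$. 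The chain of generated subgroups is $\{1\}\subset H_5\subset G$, so the tower type is $(5,7)$; yet $F_{11}F_{13}=H_7\cup aH_7\cup a^2H_7$ has cardinality $21$, so $T_{\cF}(11,13)\leq 20<24=T(5,7)(11,13)$, and $(11,13)$ is a corner of $T(5,7)$ since $11+13$ does not overflow modulo $(5,7)$. Hence a flag of tower type $(p,q)$ can fail $T\geq T(p,q)$ (this one dominates $T(7,5)$ instead), so the matching on which your case division rests breaks down; note this is consistent with the paper, whose \cref{lenstrathm} covers $(2,p)$ and $(3,p)$ but pointedly not general $(p,q)$. The variant with $v_1=ab$ (a generator, so chain tower $(35)$) likewise refutes your claim that in the $(pq)$ case \emph{both} $T(p,q)$ and $T(q,p)$ are dominated, and the Cauchy--Davenport step you invoke there does not produce the contradiction you assert.

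For comparison, the paper never sorts flags by their generated tower. It argues by contradiction: if $T$ dominates none of the candidate prime towers, then \cref{testflagineqlemma} supplies, for each candidate, a violating corner, and \cref{overflowlemma} pins down the order of the Kneser stabilizer of the corresponding $F_iF_j$; the proofs of \cref{2prefined}, \cref{largepqcase}, and \cref{12} then play these coexisting stabilizers of different orders against each other by coset and cardinality counting (the $n=12$ case being a genuinely intricate analysis of three simultaneous violations). That formulation also transfers verbatim to field extensions via dimension counts, whereas your use of the quotient $G/H_q$ and of the classification of abelian groups of order $12$ has no direct field analogue. To repair your approach you would need either to drop the tower-type trichotomy or to prove the $pq$ and $12$ cases by an argument of the paper's coexistence type.
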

\begin{proof}
We separate into the following cases:
\begin{enumerate}
\item for $n = p^k$ for $p$ prime and $k \geq 1$, apply \cref{primepower};
\item for $n = 2p$ for $p$ an odd prime, apply \cref{2prefined};
\item for $n = pq$ for $p$ and $q$ distinct odd primes, apply \cref{largepqcase};
\item and for $n = 12$, apply \cref{12}.
\end{enumerate}
\end{proof}

The rest of this section is devoted to proving \cref{2prefined}, \cref{largepqcase}, \cref{primepower}, and \cref{12}.

\begin{remark}
For a tower type $\mfT = (n_1,\dots,n_t)$, we denote the tower type $T(\mfT) = T((n_1,\dots,n_t))$ by $T(n_1,\dots,n_t)$. When $t = 2$, we caution the reader to not confuse $T(n_1,n_2)$ with \emph{evaluating} some flag type $T$ at the numbers $n_1$ and $n_2$.
\end{remark}

\begin{proposition}
\label{2prefined}
Suppose $n = 2p$ for $p$ an odd prime. For every realizable flag type $T$, we have $T \geq T(2,p)$ or $T \geq T(p,2)$.
\end{proposition}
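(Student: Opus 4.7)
The plan is to prove the contrapositive: assuming $T$ is realizable but $T\not\geq T(2,p)$ and $T\not\geq T(p,2)$, I would derive a contradiction. I will give the argument for a flag $\cF$ of an abelian group $G$ of order $2p$; the field-extension case follows by analogous substitutions (a subfield of given degree in place of a subgroup of given order, and a $K_2$-subspace in place of a union of $H_2$-cosets). Since $p$ is odd, $G\cong C_{2p}$ has a unique subgroup $H_2$ of order $2$ and a unique subgroup $H_p$ of order $p$, with $H_2\cap H_p=\{1\}$; let $a$ denote the non-identity element of $H_2$.

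By \cref{testflagineqlemma} and \cref{explicitflagtype}, I obtain corners $(i_1,j_1)$ of $T(2,p)$ and $(i_2,j_2)$ of $T(p,2)$ at which $T_\cF(i_s,j_s)<i_s+j_s$ for $s\in\{1,2\}$; by the symmetry of flag types I may assume $i_s\leq j_s$. Applying \cref{overflowlemma}, the stabilizer $\Stab(F_{i_s}F_{j_s})$ is nontrivial and its order $m$ divides $2p$ properly, so $m\in\{2,p\}$, and the addition $i_s+j_s$ overflows modulo $m$. The corner condition for $T(2,p)$ forbids overflow mod $(2,p)$, which is exactly the overflow condition for $m=2$; hence $\Stab(F_{i_1}F_{j_1})=H_p$. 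Analogously $\Stab(F_{i_2}F_{j_2})=H_2$, and the required overflow mod $(2,p)$ forces both $i_2,j_2$ to be odd.

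I would then extract structural information. From \cref{overflowlemma}, $|F_{i_1}F_{j_1}|=(b_{i_1}+b_{j_1}+1)p$ where $b_i=\lfloor i/p\rfloor\in\{0,1\}$; since this cardinality is at most $i_1+j_1\leq 2p-1$, both $b_{i_1}$ and $b_{j_1}$ must vanish, so $i_1,j_1<p$ and $F_{i_1},F_{j_1}\subseteq H_p$ (using $|H_pF_{i_1}|=p$ and $1\in F_{i_1}$). The overflow mod $p$ condition then gives $i_1+j_1\geq p$, and since $i_1\leq j_1$, $j_1\geq (p+1)/2$. On the other hand, at most one of $b_{i_2},b_{j_2}$ can equal $1$ at a corner of $T(p,2)$, so $i_2<p$, and since $i_2$ is odd while $p-1$ is even, $i_2\leq p-2$. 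Moreover, since $i_2$ is odd, \cref{overflowlemma} yields $|H_2F_{i_2}|=|F_{i_2}|$, so $F_{i_2}$ is a union of $H_2$-cosets; since $1\in F_{i_2}$, this forces $a\in F_{i_2}$.

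Finally, I would derive the contradiction by splitting on whether $i_2\leq j_1$. If $i_2\leq j_1$, then $F_{i_2}\subseteq F_{j_1}\subseteq H_p$, contradicting $a\in F_{i_2}$ and $a\notin H_p$. If $i_2>j_1$, then $F_{j_1}\subsetneq F_{i_2}$; the $H_2$-coset structure of $F_{i_2}$ then forces $aF_{j_1}\subseteq F_{i_2}$ as well, and since $F_{j_1}\subseteq H_p$ with $aH_p$ disjoint from $H_p$, I obtain $|F_{i_2}|\geq 2(j_1+1)$, whence $i_2\geq 2j_1+1\geq p+2$, contradicting $i_2\leq p-2$. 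The main obstacle is this second case: the natural inclusion $F_{i_2}\subseteq F_{j_1}$ fails, so one must exploit the $H_2$-coset structure of $F_{i_2}$ together with a careful counting argument to squeeze $i_2$ between the two incompatible bounds $p+2$ and $p-2$.
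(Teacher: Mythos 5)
Your proof is correct and takes essentially the same route as the paper: both arguments extract corners witnessing $T_{\cF}\not\geq T(2,p)$ and $T_{\cF}\not\geq T(p,2)$, invoke \cref{overflowlemma} to identify stabilizers of orders $p$ and $2$, and then compare the two flag members, ruling out one containment because $F_{i_2}$ is a union of cosets of the order-$2$ group and the other by the count $\lvert F_{i_2}\rvert \geq 2\lvert F_{j_1}\rvert$ with $j_1 \geq (p+1)/2$. The only cosmetic difference is that you close the second case via $i_2 \leq p-2$ (using that $i_2$ is odd and less than $p$), whereas the paper contradicts $i_2+j_2 < 2p$ directly.
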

\begin{proof}
Assume for the sake of contradiction that there exists a flag $\cF$ such that $T_{\cF} \not \geq T(p,2)$ and $T_{\cF} \not \geq T(2,p)$. By \cref{testflagineqlemma}, there exists integers $0 < i_2 \leq j_2 < i_2 + j_2 < 2p$ such that $i_2 + j_2$ does not overflow modulo $(p,2)$ and $T_{\cF}(i_2 + j_2) < i_2 + j_2$. By \cref{overflowlemma} we must have that $F_{i_2} + F_{j_2}$ overflows modulo $2$. Moreover, 
\[
\lv\Stab(F_{i_2}F_{j_2})F_{i_2}\rv = \lv F_{i_2} \rv.
\]
Hence $F_{i_2}$ is a union of cosets of $\Stab(F_{i_2}F_{j_2})F_{i_2}$ and $\Stab(F_{i_2}F_{j_2})F_{i_2}$ is a group of order $2$.

Similarly, because $T_{\cF} \not \geq T(2,p)$, there must exist integers $0 < i_p \leq j_p < i_p + j_p < 2p$ such that $i_p + j_p$ does not overflow modulo $(2,p)$ and $T_{\cF}(i_p + j_p) < i_p + j_p$. Again, by \cref{overflowlemma} we must have that $i_p + j_p$ overflows modulo $p$ and $\Stab(F_{i_p}F_{j_p})$ is a group of order $p$. Because $i_p + j_p \geq p$, we have $j_p \geq \frac{p+1}{2}$. 

Because $F_{i_2}$ is a union of cosets of a group of order $2$, it cannot be contained in a group of order $p$. Therefore, we must have $F_{j_p}  \subseteq F_{i_2}$ and hence
\begin{align*}
i_2 + 1 &= \lv F_{i_2} \rv \\
&= \lv \Stab(F_{i_2}F_{j_2})F_{i_2} \rv \\
&\geq \lv \Stab(F_{i_2}F_{j_2})F_{j_p} \rv \\
&= 2\big(\frac{p+1}{2} + 1\big) = p+3.
\end{align*}
Thus $i_2 + j_2 \geq 2p$, which is a contradiction.
\end{proof}

\begin{proposition}
\label{largepqcase}
Suppose $n = pq$ for $p$ and $q$ distinct odd primes. Then for every realizable flag type $T$, we have $T \geq T(p,q)$ or $T \geq T(q,p)$.
\end{proposition}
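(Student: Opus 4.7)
The argument mirrors \cref{2prefined}. Suppose for contradiction that $\cF$ is a flag with $T_\cF \not\geq T(p,q)$ and $T_\cF \not\geq T(q,p)$. Applying \cref{testflagineqlemma} together with \cref{explicitflagtype} yields pairs $(i_q,j_q)$ and $(i_p,j_p)$ with $i_\bullet \leq j_\bullet$, lying at corners of $T(p,q)$ and $T(q,p)$ respectively, at which $T_\cF(i_\bullet,j_\bullet) < i_\bullet+j_\bullet$. Exactly as in \cref{2prefined}, \cref{overflowlemma} combined with non-overflow of $i_q+j_q$ modulo $p$ (ruling out $|\Stab(F_{i_q}F_{j_q})| = p$) and $|F_{i_q}F_{j_q}| < n$ (ruling out $|\Stab(F_{i_q}F_{j_q})| = n$) forces $H_q := \Stab(F_{i_q}F_{j_q})$ to be the unique subgroup of $G$ of order $q$. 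Symmetrically $H_p := \Stab(F_{i_p}F_{j_p})$ has order $p$. Since $\gcd(p,q) = 1$, $G = H_p \oplus H_q$, and I view $G$ as a $p \times q$ grid with rows the cosets of $H_q$ and columns the cosets of $H_p$.

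By \cref{overflowlemma}, $F_{i_q}F_{j_q}$ is a union of $a$ rows (necessarily including row $H_q$, since both contain the identity) and $F_{i_p}F_{j_p}$ is a union of $b$ columns (including column $H_p$), for integers $1 \leq a \leq p-1$ and $1 \leq b \leq q-1$ determined by the mixed radix expansions of $i_\bullet, j_\bullet$. Setting $K = \max(T_\cF(i_q,j_q), T_\cF(i_p,j_p))$, the grid picture gives
\[
   K+1 \geq |F_{i_q}F_{j_q} \cup F_{i_p}F_{j_p}| = aq+bp-ab,
\]
while $K < \max(i_q+j_q, i_p+j_p) \leq \max((a+1)q-2, (b+1)p-2)$, where the second inequality uses the maximal values of the remaining mixed radix digits appearing in \cref{overflowlemma}. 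Combining forces $a(q-b) \leq p-2$ or $b(p-a) \leq q-2$, which yields an immediate contradiction for $(a,b)$ violating both (such as $(a,b)=(p-1,q-1)$).

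The main obstacle is ruling out the remaining boundary $(a,b)$. In the cleanest case $a=b=1$, we have $F_{i_q}F_{j_q} = H_q$ and $F_{i_p}F_{j_p} = H_p$, so $F_{j_q} \subseteq H_q$ and $F_{j_p} \subseteq H_p$; flag nestedness traps the smaller of these into $H_p \cap H_q = \{1\}$, contradicting the bound $j_\bullet \geq \lceil p/2 \rceil \geq 2$ implied by $i_\bullet+j_\bullet \geq p$, $i_\bullet \leq j_\bullet$, and $p \geq 3$. For the asymmetric boundary cases (e.g.\ $a=1$ with $b$ near $q-1$), flag nestedness gives either $F_{j_p} \subseteq F_{j_q}$ or $F_{j_q} \subseteq F_{j_p}$: in the first sub-case $F_{i_p}, F_{j_p} \subseteq F_{j_q} \subseteq H_q$, so $F_{i_p}F_{j_p} \subseteq H_q$ and $bp \leq q$, absurd for $b \geq 2$; in the second, the containment $F_{j_q} \subseteq H_q \cap F_{i_p}F_{j_p}$ bounds $|F_{j_q}| \leq b$ (since $H_q$ meets each of the $b$ columns in one element), and combining this with the lower bound $|F_{j_q}| \geq \lceil (q+1)/2 \rceil$, the saturated union equality $|F_K| = aq+bp-ab$, and a stabilizer analysis of the flag elements inside $F_K$ produces the required contradiction. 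The technical heart of the proof is the case-by-case bookkeeping across these small-parameter scenarios.
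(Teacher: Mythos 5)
Your setup (corners via \cref{testflagineqlemma}, then \cref{overflowlemma} forcing $\lvert\Stab(F_{i_q}F_{j_q})\rvert=q$ and $\lvert\Stab(F_{i_p}F_{j_p})\rvert=p$, and the $p\times q$ grid picture) matches the paper, but the core of your argument does not close. The counting step $aq+bp-ab\le K+1\le\max\{(a+1)q,(b+1)p\}-2$ eliminates only those $(a,b)$ for which \emph{both} $a(q-b)>p-2$ and $b(p-a)>q-2$; the complementary set is not a thin ``boundary'' --- for $p=5$, $q=7$ it contains $(a,b)=(2,6),(3,2),(4,3),(4,5)$, among many others --- and your case sketches only treat situations with $a=1$, where $F_{i_q}F_{j_q}=H_q$ forces $F_{j_q}\subseteq H_q$. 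For $a\ge 2$ that containment fails and none of your sub-case arguments apply, while in the decisive sub-case you do reach ($a=1$, $F_{j_q}\subseteq F_{j_p}$) the contradiction is only asserted (``a stabilizer analysis \dots produces the required contradiction''). There are also local slips: $\lvert F_K\rvert=aq+bp-ab$ is only an inequality, not an equality; and ``$bp\le q$, absurd for $b\ge 2$'' is false when $q>2p$ (the correct point there is that a nonempty union of $H_p$-cosets can never lie inside $H_q$, since each $H_p$-coset meets $H_q$ in a single element). So the proposal is genuinely incomplete at exactly the hard step.

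What the paper does instead, and what your write-up is missing, is a mechanism that handles all $(a,b)$ at once. First, if $i_q\le i_p$ and $j_q\le j_p$ (or the reverse), then $H_q\subseteq F_{i_q}F_{j_q}\subseteq F_{i_p}F_{j_p}$, and since the latter is a union of $H_p$-cosets it contains $H_pH_q=G$, a contradiction; hence WLOG $i_p\le i_q\le j_q\le j_p$. Then a two-stabilizer grid bound --- $F_{i_q}$ lies in the intersection of $\lfloor i_q/q\rfloor+1$ rows with at most $\lfloor j_p/p\rfloor+1$ columns, so $i_q+1\le(\lfloor i_q/q\rfloor+1)(\lfloor j_p/p\rfloor+1)$, and likewise for $j_q$ --- yields $i_q+j_q\le j_p$, whence $H_q\subseteq F_{T_{\cF}(i_q,j_q)}\subseteq F_{j_p}\subseteq F_{i_p}F_{j_p}$ and again $G\subseteq F_{i_p}F_{j_p}$. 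Your bound $\lvert F_{j_q}\rvert\le b$ is the $a=1$ shadow of this idea, but without the interleaving reduction and the product bound in general position, the cases left over from your counting inequality cannot be dispatched by bookkeeping; completing your plan would essentially require rediscovering this step.
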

\begin{proof}
Assume for the sake of contradiction that there exists a flag $\cF$ such that $T_{\cF} \not \geq T(p,q)$ and $T_{\cF} \not \geq T(q,p)$. By \cref{testflagineqlemma}, there exists integers $0 < i_q \leq j_q < i_q + j_q < pq$ such that $i_q + j_q$ does not overflow modulo $(p,q)$, but $T_{\cF}(i_q, j_q) > i_q + j_q$. By \cref{overflowlemma} we must have that $i_q + j_q$ overflows modulo $q$ and  $K_q = \Stab(F_{i_q} F_{j_q})$ is a group of order $q$. 

Similarly, because $T_{\cF} \not \geq T(q,p)$, there must exist integers $0 < i_p \leq j_p < i_p + j_p < pq$ such that $i_p + j_p$ does not overflow modulo $(q,p)$, but $T(i_p, j_p) > i_p + j_p$. By \cref{overflowlemma} we must have that $i_p + j_p$ overflows modulo $q$ and  $K_p = \Stab(F_{i_p}F_{j_p})$ is a group of order $p$.

If $i_q \leq i_p$ and $j_q \leq j_p$ then
\[
	K_q \subseteq F_{i_q} F_{j_q} \subseteq F_{i_p} F_{j_p},
\]
and because $F_{i_p}F_{j_p}$ is a union of cosets of $K_p$, we have 
\[
G = K_pK_q \subseteq F_{i_p}F_{j_p},
\]
which is a contradiction. Similarly, it is not possible for $i_p \leq i_q$ and $j_p \leq j_q$. Hence without loss of generality we may suppose $i_p \leq i_q \leq j_q \leq j_p$. Write $i_p$ and $j_p$ in mixed radix notation with respect to $(p,q)$ and write $i_q$ and $j_q$ in mixed radix notation with respect to $(q,p)$ as
\begin{align*}
i_p &= i_{1,p}p + i_{2,p} \\
j_p &= j_{1,p}p + j_{2,p} \\
i_q &= i_{1,q}q + i_{2,q} \\
j_q &= j_{1,q}q + j_{2,q}.
\end{align*} 
By \cref{overflowlemma}, we have that
\begin{align*}
\lv K_p F_{i_p}\rv &= \lv K_p\rv(i_{1,p}+1) \\
\lv K_p F_{j_p}\rv &= \lv K_p\rv(j_{1,p}+1) \\
\lv K_q F_{i_q}\rv &= \lv K_q\rv(i_{1,q}+1) \\
\lv K_q F_{j_q}\rv &= \lv K_q\rv(j_{1,q}+1).
\end{align*}

Note that
\[
i_q + 1 = \lv F_{i_q}\rv \leq \frac{\lv K_q F_{i_q}\rv}{\lv K_q \rv}\frac{\lv K_p F_{i_q}\rv}{\lv K_p \rv}\leq (i_{1,q}+1)\frac{\lv K_p F_{j_p}\rv}{\lv K_p \rv}= (i_{1,q}+1)(j_{1,p}+1)
\]
\[
j_q + 1= \lv F_{j_q}\rv \leq \frac{\lv K_q F_{j_q}\rv}{\lv K_q \rv}\frac{\lv K_p F_{j_q}\rv}{\lv K_p \rv} \leq (j_{1,q}+1)\frac{\lv K_p F_{j_p}\rv}{\lv K_p \rv}= (j_{1,q}+1)(j_{1,p}+1).
\]
Thus
\begin{align*}
i_q + j_q &< (i_{1,q}+1)(j_{1,p}+1) + (j_{1,q}+1)(j_{1,p}+1) \\
		  &= (i_{1,q}+j_{1,q}+2)(j_{1,p}+1) \\
		  &\leq p(j_{1,p}+1) \\
		  &\leq j_p.
\end{align*}
Because $i_q + j_q \leq j_p$, we have $K_q \subseteq F_{i_q + j_q - 1} \subseteq F_{j_p}$ and because $F_{i_p}F_{j_p}$ union of $K_p$-cosets, we have
\[
	K = K_qK_p \subseteq F_{i_p}F_{j_p},
\]
which is a contradiction.
\end{proof}

\begin{proposition}
\label{primepower}
Suppose $n = p^k$ for $p$ and odd prime and $k \geq 1$. Then for every realizable flag type $T$, we have $T \geq T(p,\dots,p)$.
\end{proposition}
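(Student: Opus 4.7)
The plan is to reduce the statement to a corner-by-corner comparison via \cref{testflagineqlemma}, and then derive a contradiction using \cref{overflowlemma} and a digit-wise arithmetic argument in base $p$. Concretely, to prove $T_{\cF} \geq T(p,\dots,p)$ it suffices, by \cref{testflagineqlemma}, to verify that $T_{\cF}(i,j) \geq T(p,\dots,p)(i,j)$ at every corner $(i,j)$ of $T(p,\dots,p)$. By \cref{explicitflagtype}, such corners are exactly the pairs $0 < i,j < n$ with $i+j < n$ for which $i+j$ does not overflow modulo $(p,\dots,p)$, and at every such corner $T(p,\dots,p)(i,j) = i+j$. So the task is: if $(i,j)$ is such a corner, then $T_{\cF}(i,j) \geq i+j$.

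Suppose for contradiction that $T_{\cF}(i,j) < i+j$, so that $\lv F_iF_j \rv = T_{\cF}(i,j)+1 \leq i+j$. Apply \cref{overflowlemma} to the pair $(F_i, F_j)$, and let $m = \lv \Stab(F_iF_j)\rv$. Then $m > 1$, $m$ divides $n = p^k$, and $i+j$ overflows modulo $m$. Because $m$ divides $p^k$, we have $m = p^s$ for some $1 \leq s \leq k$; and since $m \leq \lv F_iF_j \rv \leq i+j < p^k$, we must have $s \leq k-1$.

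The main arithmetic step is then to show that no-overflow modulo $(p,\dots,p)$ forces no-overflow modulo $p^s$ for every $1 \leq s \leq k-1$, which contradicts the conclusion of the previous paragraph. Write $i$ and $j$ in base $p$ as $i = \sum_{r=0}^{k-1} i_r p^r$ and $j = \sum_{r=0}^{k-1} j_r p^r$ with $0 \leq i_r, j_r < p$. The hypothesis that $i+j$ does not overflow modulo $(p,\dots,p)$ says exactly that $i_r + j_r < p$ for every $r$, i.e.\ that the digit-wise base-$p$ addition of $i$ and $j$ carries nowhere. The mixed radix expansion of $i$ with respect to $(p^s, p^{k-s})$ is $i = i' + i'' p^s$ with $i' = \sum_{r=0}^{s-1} i_r p^r$, and similarly $j = j' + j'' p^s$. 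Then
\[
i' + j' \;=\; \sum_{r=0}^{s-1}(i_r + j_r)p^r \;\leq\; \sum_{r=0}^{s-1}(p-1)p^r \;=\; p^s - 1 \;<\; p^s,
\]
so $i+j$ does not overflow modulo $p^s = m$. This contradiction completes the proof. I do not expect any serious obstacle; the only subtle point is ensuring $s \leq k-1$ (so that $m < n$ and \cref{overflowlemma} is actually non-vacuous), which follows from $m \leq i+j < p^k$ as noted above.
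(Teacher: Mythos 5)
Your proof is correct and follows essentially the same route as the paper: reduce via \cref{testflagineqlemma} (and \cref{explicitflagtype}) to a non-overflowing pair with $T_{\cF}(i,j)<i+j$, then apply \cref{overflowlemma} to get a stabilizer of order $m=p^s$ modulo which $i+j$ overflows, a contradiction. The only difference is that you spell out the base-$p$ digit computation (no carries in base $p$ implies no overflow modulo any $p^s$, and $m\leq i+j<p^k$ so $s\leq k-1$), a step the paper leaves implicit in ``which is a contradiction.''
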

\begin{proof}
Suppose there exists a realizable flag type $T$ such that $T_{\cF} \not \geq T(p,\dots,p)$. By \cref{testflagineqlemma}, there exists integers $0 < i \leq j < n$ such that $i + j$ does not overflow modulo $(p,\dots,p)$ and $T(i,j) < i+j$. By \cref{overflowlemma}, $i + j$ must overflow over some positive integer $m$ such that $m \mid p^k$, which is a contradiction.
\end{proof}

\begin{proposition}
\label{12}
Suppose $n = 12$. Then for every realizable flag type $T$, we have $T \geq T(3,2,2)$ or $T \geq T(2,3,2)$ or $T \geq T(2,2,3)$.
\end{proposition}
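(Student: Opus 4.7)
I would argue by contradiction, following the pattern of Propositions \ref{2prefined} and \ref{largepqcase}. Suppose there exists a flag $\cF$ of a finite abelian group of order $n=12$ such that $T_\cF$ dominates none of $T(3,2,2)$, $T(2,3,2)$, $T(2,2,3)$. Applying \cref{testflagineqlemma} and \cref{explicitflagtype} three times yields pairs $(i_a,j_a)$ for $a\in\{1,2,3\}$ such that $i_a+j_a<n$ fails to overflow modulo $(3,2,2)$, $(2,3,2)$, $(2,2,3)$ respectively, yet $T_\cF(i_a,j_a)<i_a+j_a$. Then \cref{overflowlemma} provides stabilizer subgroups $K_a=\Stab(F_{i_a}F_{j_a})$ of cardinality $m_a>1$ dividing $12$. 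Since the no-overflow hypothesis modulo a tower type $\mfT$ precludes overflow modulo every partial product of the entries of $\mfT$, we obtain
\[
m_1\in\{2,4\},\qquad m_2\in\{3,4\},\qquad m_3\in\{3,6\}.
\]

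The main step is to pool the three stabilizers. Since every $F_{i_a}F_{j_a}$ is a term of the flag $\cF$, the three sets are totally ordered by inclusion, so some index $c\in\{1,2,3\}$ satisfies $F_{i_a}F_{j_a}\subseteq F_{i_c}F_{j_c}$ for all $a$. Because $1\in F_{i_a}F_{j_a}$ and $K_a$ stabilizes this set, $K_a\subseteq F_{i_a}F_{j_a}\subseteq F_{i_c}F_{j_c}$; as $F_{i_c}F_{j_c}$ is a union of $K_c$-cosets, the subgroup $H:=K_1K_2K_3$ is contained in $F_{i_c}F_{j_c}$. If $H=G$ then $12=|G|\le|F_{i_c}F_{j_c}|\le i_c+j_c<12$, the desired contradiction. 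Because $G$ is abelian of order $12$, its Sylow $3$-subgroup $P_3$ is unique and lies in $K_3$ (since $m_3\in\{3,6\}$), so the $3$-part of $H$ is always $P_3$; the remaining question is whether the Sylow-$2$ projection of $H$ equals the Sylow $2$-subgroup $P_2$.

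A short case check on $(m_1,m_2,m_3)$ shows $H=G$ in every configuration except (i) $(m_1,m_2,m_3)=(2,3,3)$, where $K_2=K_3=P_3$ and $H$ is an index-$2$ subgroup of $G$, and (ii) $(m_1,m_2,m_3)=(2,3,6)$ with $K_1=K_3\cap P_2$, in which $H$ is again of index $2$. In these residual cases I would argue directly by combining the sharp size equalities $|K_aF_{i_a}|=m_a(s_a+1)$ and $|F_{i_a}F_{j_a}|=m_a(s_a+s'_a+1)$ of \cref{overflowlemma} with the Sylow decomposition $G\cong P_2\times P_3$ and the inequality $|F_{i_a}|\le|\pi_{P_2}(F_{i_a})|\cdot|\pi_{P_3}(F_{i_a})|$, in the spirit of the key estimate at the end of the proof of \cref{largepqcase}. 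Feeding the resulting digit constraints back through the no-overflow conditions for all three tower types forces an impossible inequality on some $i_a+j_a$. The field extension case follows by the identical argument with \cref{zemorthm} in place of \cref{kneserorig}, dimensions in place of cardinalities, and intermediate fields in place of subgroups; the main obstacle in both settings is precisely the numerical verification in these two residual configurations.
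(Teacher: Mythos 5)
Your opening reduction is sound and matches the paper: \cref{testflagineqlemma} plus \cref{overflowlemma}, together with the observation that no overflow modulo a tower type forbids overflow modulo its partial products, correctly pins down $m_1\in\{2,4\}$, $m_2\in\{3,4\}$, $m_3\in\{3,6\}$. The proof breaks down, however, at your ``pooling'' step. The sets $F_{i_a}F_{j_a}$ are \emph{not} terms of the flag: a product of two flag members is sandwiched between $F_{\max(i_a,j_a)}$ and $F_{T_{\cF}(i_a,j_a)}$ but need not be an initial segment $\{v_0,\dots,v_k\}$, so the three product sets need not be totally ordered by inclusion. Containments such as $F_{i_a}F_{j_a}\subseteq F_{i_c}F_{j_c}$ are only available when the index pairs are componentwise comparable ($i_a\le i_c$ and $j_a\le j_c$), and the genuinely hard situation is the ``crossed'' one (e.g.\ $i_c\le i_a\le j_a\le j_c$) --- exactly the case that consumes most of the effort in the proof of \cref{largepqcase} and in the paper's own proof of \cref{12}. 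Moreover, even granting a common set $S=F_{i_c}F_{j_c}$ containing all three stabilizers, the coset structure only yields $K_cK_a\subseteq S$ for each $a$ separately; it does not give $K_1K_2K_3\subseteq S$, so the conclusion $H=G\subseteq S$ is not justified even under your (false) total-ordering premise.

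Beyond that, the ``residual configurations'' are left as an unexecuted sketch: you state that feeding size equalities and Sylow projections back through the no-overflow conditions ``forces an impossible inequality,'' but this is precisely the numerical heart of the statement and is not carried out. The paper instead proceeds by examining the element $v_1$ generating $F_1$, showing $\ord(v_1)\mid 4$, splitting into $\ord(v_1)=4$ and $\ord(v_1)=2$, and deriving contradictions from the coset structure of \cref{overflowlemma} together with coprimality and bounds like $\lvert K_2\cap K_3\rvert\le\gcd(m_2,m_3)=2$ --- arguments that also transfer verbatim to field extensions, whereas your use of the Sylow decomposition $G\cong P_2\times P_3$ and the projections $\pi_{P_2},\pi_{P_3}$ has no direct field-theoretic analogue. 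As it stands the proposal has a genuine gap at its main step and an incomplete case analysis where the real work lies.
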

\begin{proof}
Assume for the sake of contradiction that there exists a flag $\cF$ such that $T_{\cF} \not \geq T(3,2,2)$, $T_{\cF} \not \geq T(2,3,2)$, and $T_{\cF} \not \geq T(2,2,3)$. By \cref{testflagineqlemma}, there exists $i_1$, $i_2$, $i_3$, $j_1$, $j_2$, $j_3$ such that
\begin{align*}
0 < i_1 \leq j_1 &< i_1 + j_1 < 12 \\
0 < i_2 \leq j_2 &< i_2 + j_2 < 12 \\
0 < i_3 \leq j_3 &< i_3 + j_3 < 12,
\end{align*}
and $i_1 + j_1$ (resp. $i_2 + j_2$, $i_3 + j_3$) does not overflow modulo $(3,2,2)$ (resp. $(2,3,2)$, $(2,2,3)$), and
\begin{align*}
\label{cond}
T_{\cF}(i_1 + j_1) &> i_1 + j_1 \\
T_{\cF}(i_2 + j_2) &> i_2 + j_2 \\
T_{\cF}(i_3 + j_3) &> i_3 + j_3.
\end{align*}

By \cref{overflowlemma} we have that  
\[
	K_1 = \Stab(F_{i_1}F_{j_1})
\]
\[
	K_2 = \Stab(F_{i_2}F_{j_2})
\]
\[
	K_3 = \Stab(F_{i_3} F_{j_3})
\]
are a nontrivial proper subgroups; let $m_1 = \lvert K_1 \rv$, let $m_2 = \lv K_2 \rv$, and let $m_3 = \lv K_3 \rv$. 

The list of possible triples $(i_1,j_1,m_1)$ for which $0 < i_1 \leq j_1 < i_1 + j_1 < 12$, the addition $i_1 + j_1$ overflows modulo $m_1$, we have $1 < m_1 < 12$, we have $m_1 \mid 12$, and the addition $i_1 + j_1$ does not overflow modulo $(3,2,2)$ is
\[
	\{(1,1,2), (1,3,2), (1,3,4), (1,7,2), (1,7,4), (1,9,2), (2,3,4), (2,6,4), (3,6,4), (3,7,2), (3,7,4)\}.
\]
The list of possible triples $(i_2,j_2,m_2)$ for which $0 < i_2 \leq j_2 < i_2 + j_2 < 12$, the addition $i_2 + j_2$ overflows modulo $m_2$, we have $1 < m_2 < 12$, we have $m_2 \mid 12$, and the addition $i_2 + j_2$ does not overflow modulo $(2,3,2)$ is
\[
	\{(1,2,3), (1,8,3), (2,2,3), (2,2,4), (2,3,4), (2,6,4), (2,7,3), (2,7,4), (2,8,3), (3,6,4)\}.
\]
The list of possible triples $(i_3,j_3,m_3)$ for which $0 < i_3 \leq j_3 < i_3 + j_3 < 12$, the addition $i_3 + j_3$ overflows modulo $m_3$, we have $1 < m_3 < 12$, we have $m_3 \mid 12$, and the addition $i_3 + j_3$ does not overflow modulo $(2,2,3)$ is
\[
	\{(1,2,3), (1,8,3), (2,4,3), (2,4,6), (2,5,3), (2,5,6), (2,8,3), (3,4,6), (4,4,6), (4,5,3), (4,5,6)\}.
\]
We will show that every combination of integers $(i_1,j_1,m_1)$, $(i_2,j_2,m_2)$, and $(i_3,j_3,m_3)$ from the lists above give rise to a contradiction.

Then because $i_1 < m_1$, by \cref{overflowlemma}, we have $K_1F_{i_1} = K_1$ so $F_{i_1} \subseteq K_{1}$. Write $F_1 = \{1, v_1\}$. Then $\ord(v_1) \mid m_1 \mid 4$. 

If $\ord(v_1) = 4$ then as $\ord(v_1) \mid m_1 \mid 4$, we have that $m_1 = 4$. If $i_3 < m_3$ then $F_{i_3} \subseteq K_{3}$ and thus $\deg(v_1) \mid m_{3} \mid 6$, which is a contradiction. Hence $i_3 > m_3$ so we must have $m_3 = 3$ and $i_3 = 4$ and $j_3 = 5$. Moreover, by \cref{overflowlemma}, $\lv K_{3}F_5\rv = 2 \lv K_3\rv$. Because $m_1=4$ and $m_3=3$ are coprime, we have $F_{i_1} \ra \subseteq K_{1}$ and 
\[
	i_1+1 = \lv F_{i_1} \rv = \frac{\lv K_3 F_{i_1}\rv}{\lv K_3 \rv} \leq \frac{\lv K_3 F_{5}\rv}{\lv K_3 \rv} = 2,
\]
which is a contradiction.

Thus $\ord(v_1) = 2$, and therefore without loss of generality we may suppose $i_1 = j_1 = 1$ and $m_1 = 2$, as  $\ord(v_1) = 2$ implies that
\[
	F_1F_1 = F_1.
\]

Notice that $i_2 < m_2$, so $F_{i_2} \subseteq K_{2}$ and thus $\ord(v_1) = 2 \mid \ord(K_2)$; hence $m_2 = 4$. 

If $m_3 = 3$, then if $i_3 < m_3$ we have $F_{i_3} \ra \subseteq K_{3}$ and thus $\ord(v_1) \mid K_{3} \mid 3$, which is a contradiction. Thus $i_3 > m_3$, so $i_3 = 4$ and $j_3 = 5$. By \cref{overflowlemma}, $\lv K_{3}F_5\rv = 2\lv K_3\rv$; thus 
\[
	F_5 = K_{3}F_1
\]
is a group of order $6$. However, 
\begin{align*}
 K_{3}F_1 &= \Stab(F_5) \\
 &\subseteq \Stab(F_4F_5) \\
 &= K_3,
\end{align*}
which is a contradiction.

Hence, $m_3 = 6$. Because $i_2 < m_2$ and $i_3 < m_3$ and $2 \leq i_2,i_3$, we have $F_2 \subseteq K_{2} \cap K_{3}$, which is a contradiction, because
\[
\lv K_{2} \cap K_{3} \rv \leq \gcd(m_2,m_3) = \gcd(4,6) = 2.
\]
\end{proof}

\section{Proof that the flag types $T(p_1,\dots,p_t)$ do not form the set of minimal flag types for certain $n$}
\label{notcompletesect}

\begin{theorem}
\label{notcompletethm}
Suppose $n$ is not of the following form:
\begin{enumerate}
\item $n = p^k$, with $p$ prime and $k \geq 1$;
\item $n = pq$, with $p$ and $q$ distinct primes;
\item or $n = 12$.
\end{enumerate}

Then there exists a realizable flag type $T$ such that 
\[
	P_T \not\subseteq \bigcup_{(p_1,\dots,p_t)}P_{T(p_1,\dots,p_t)}.
\]
\end{theorem}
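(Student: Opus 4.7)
The plan is to construct, for each $n$ outside the classification of \cref{notcompletethm}, a realizable flag type $T$ of degree $n$ together with a point $\mathbf{x} \in P_T$ that lies in no $P_{T(p_1,\dots,p_t)}$ for any prime tower of degree $n$. By \cref{injectionlemma} and \cref{cornerfacetthm}, this reduces to exhibiting, for each such prime tower, a corner $(i,j)$ of $T(p_1,\dots,p_t)$ for which the defining inequality $x_{T(p_1,\dots,p_t)(i,j)} \leq x_i + x_j$ fails at $\mathbf{x}$.

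I would proceed by case analysis on the prime factorization of $n$. The base case $n = 18$ is handled directly by \cref{cornersstrongineq}, which already produces a realizable flag type whose polyhedron escapes $P_{T(2,3,3)} \cup P_{T(3,2,3)} \cup P_{T(3,3,2)}$, the union of all prime tower polyhedra of degree $18$. The remaining bad $n$ split into two families: $n = p^a q^b$ with $a,b \geq 1$, $a+b \geq 3$, and $n \notin \{12, 18\}$, so $n \in \{20, 24, 28, 36, 45, 48, 50, \dots\}$; and $n$ with at least three distinct prime factors, so $n \in \{30, 42, 60, 66, \dots\}$. For each family I would exhibit a finite abelian group $G$ of order $n$ together with a twisted ordering of $G$ whose associated flag type avoids every prime tower polyhedron.

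The construction I have in mind begins from a direct product decomposition $G = G_1 \times \cdots \times G_s$ along prime power components, and intersperses elements from different factors in a non-lexicographic way. The design principle: for every prime tower $(p_1,\dots,p_t)$ of $n$, the flag type $T(p_1,\dots,p_t)$ encodes a specific nested subgroup filtration (by \cref{realizabilityforgroupsthm}) which the twisted ordering should refuse to respect, yielding a corner at which $T_\cF$ strictly exceeds $T(p_1,\dots,p_t)$. The degree $18$ construction underlying \cref{cornersstrongineq} is the prototype, and analogous twists across additional factors of $G$ are expected to work for larger $n$. The witness point $\mathbf{x}$ would be chosen with a stepwise structure similar to those constructed in the proof of \cref{cornerfacetthm}, tailored so that one coordinate jumps exactly at the index of the offending corner.

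The main obstacle I expect is uniformity: a single flag must simultaneously escape every prime tower flag type of degree $n$. When $n$ has many distinct prime factors, there are many competing tower types, up to $t!$ permutations for each refinement of the prime power factorization, and the twisted ordering must leave room in every direction of the polyhedral poset. Verifying that the chosen witness $\mathbf{x}$ violates some defining inequality of $P_{T(p_1,\dots,p_t)}$ for each prime tower is a combinatorial case check, and I expect this verification, rather than the choice of ordering itself, to constitute the bulk of the technical work in \cref{notcompletesect}.
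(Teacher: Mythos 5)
Your proposal has the right general shape (a twisted ordering of an abelian group plus a witness point violating one facet inequality of each prime-tower polyhedron), but as it stands it is a plan rather than a proof, and it is missing the idea that makes the problem tractable. The paper does not construct a flag separately for every bad $n$: it first proves an extension lemma (\cref{refconjcounterexextensionlemma}) showing that a counterexample in degree $m$ lifts to a counterexample in any degree $n$ divisible by $m$, by passing to $G\times C_p$ with an interleaved ordering and perturbing the witness point by multiples of $\epsilon/(2p)$. This reduces the whole theorem to three minimal divisor shapes, $n=p^2q$, $n=pqr$, and $n=4p$ (\cref{p2q}, \cref{pqr}, \cref{4p}), where the number of prime towers to escape is small and can be controlled by just two explicit inequalities (one killing the towers whose first entry is the ``wrong'' prime, one killing the rest). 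Your proposed direct attack on every $n$ with many prime factors runs head-on into exactly the uniformity obstacle you name, and you offer no mechanism to resolve it; the lifting lemma is that mechanism, and without it (or a substitute) the argument does not close.

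Two further concrete gaps. First, the base cases themselves contain real content you have not supplied: for $n=pqr$ the construction hinges on a number-theoretic fact (\cref{pqrhelperlemma}) producing an index $m$ with $q\le m\le\lfloor qr/2\rfloor$ such that $pm+pm$ overflows modulo $q$ while $m+m$ overflows modulo neither $q$ nor $r$; nothing in your sketch produces the analogous indices, and the verification that the chosen $\mathbf{x}$ lies in $P_{T_{\cF}}$ (a multi-case check over all $i,j$) is the bulk of the work, not a routine afterthought. Second, invoking \cref{cornersstrongineq} to dispose of $n=18$ is circular relative to the paper's logic: that theorem is proved via \cref{existsapt18lemma}, whose point is taken from the proof of \cref{pqr} with $n=18$, i.e.\ from the very construction your proposal is supposed to supply. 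So the $n=18$ case, like the others, still needs an independent construction.
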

\begin{proof}
By \cref{refconjcounterexextensionlemma}, proving the statement of \cref{notcompletethm} for degree $n$ implies the statement of \cref{notcompletethm} for degree $nt$ for any positive integer $t$. Therefore, it suffices to prove the theorem when:
\begin{enumerate}
\item $n = p^2q$ for two distinct odd primes $p$ and $q$ with $p < q$, in which case \cref{p2q} provides a proof;
\item $n = pqr$ for three primes $p$, $q$, and $r$ with $p < q \leq r$, in which case \cref{pqr} provides a proof;
\item or $n = 4p$ for a prime $p \neq 2,3$, in which case \cref{4p} provides a proof.
\end{enumerate}
\end{proof}

The rest of this section is dedicated to proving \cref{refconjcounterexextensionlemma}, \cref{p2q}, \cref{pqr}, and \cref{4p}.

\begin{lemma}
\label{refconjcounterexextensionlemma}
Let $n,m \in \ZZ_{>1}$ be such that $m \mid n$.
If there exists a realizable flag type $T$ of degree $m$ such that 
\[
	P_T \not\subseteq \bigcup_{(p_1,\dots,p_t)}P_{T(p_1,\dots,p_t)}
\]
where $(p_1,\dots,p_t)$ range across all tuples with prime entries such that $p_1\dots p_t = m$, then there exists a realizable flag type $T'$ of degree $n$ such that 
\[
	P_{T'} \not\subseteq \bigcup_{(p'_1,\dots,p'_t)}P_{T(p'_1,\dots,p'_t)}
\]
where $(p'_1,\dots,p'_t)$ range across all tuples with prime entries such that $p'_1\dots p'_t = n$.
\end{lemma}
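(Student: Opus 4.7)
The plan is to construct $T'$ by lexicographically stacking any realization of $T$ with a tower of prime cyclic factors on top, and then read off the failure of domination on the bottom block $[m] \times [m]$. By hypothesis fix a flag $\cF = \{F_i\}_{i \in [m]}$ realizing $T$; I treat the abelian group case $G$ of order $m$ explicitly, the field extension case being entirely analogous once a tower of simple prime-degree extensions of $L$ is chosen (such a tower exists over any field). Write $n/m = q_1 \cdots q_s$ as a product of primes, let $H = C_{q_1} \times \cdots \times C_{q_s}$ with generators $g_1, \ldots, g_s$, set $G' = G \times H$, and let $v_0, \ldots, v_{m-1}$ be the sequence underlying $\cF$. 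For each $i \in [n]$, write $i = a + bm$ with $0 \leq a < m$, $0 \leq b < n/m$, and $b$ in mixed radix $(b_1, \ldots, b_s)$ with respect to $(q_1, \ldots, q_s)$; set $w_i = (v_a, g_1^{b_1} \cdots g_s^{b_s})$, $F'_i = \{w_0, \ldots, w_i\}$, $\cF' = \{F'_i\}$, and $T' = T_{\cF'}$.

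By construction $w_i = (v_i, 1)$ for $i \in [m]$, hence $F'_i = F_i \times \{1\}$ and $T'|_{[m] \times [m]} = T$; in particular $T'(m-1, m-1) = m-1$. Now suppose for contradiction that $T' \geq T(p'_1, \ldots, p'_{t'})$ for some prime tower $(p'_1, \ldots, p'_{t'})$ of degree $n$. Evaluating the domination at $(m-1, m-1)$ and using the trivial monotone lower bound $T(p'_1, \ldots, p'_{t'})(m-1, m-1) \geq m-1$ gives equality. In the lexicographic realization of $T(p'_1, \ldots, p'_{t'})$ on $C_{p'_1} \times \cdots \times C_{p'_{t'}}$ this means the set $\{v_0, \ldots, v_{m-1}\}$ is closed under multiplication, hence a subgroup; a short induction (each generator $e_l$ occurs at position $p'_1 \cdots p'_{l-1}$) shows that the subgroups appearing in the lexicographic flag are exactly the partial products of the tower, so $m = p'_1 \cdots p'_j$ for some $j$.

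With this splitting in hand, for indices in $[m]$ every product stays inside the subgroup $C_{p'_1} \times \cdots \times C_{p'_j}$, and the restriction $T(p'_1, \ldots, p'_{t'})|_{[m] \times [m]}$ equals $T(p'_1, \ldots, p'_j)$. Combining with $T'|_{[m] \times [m]} = T$, the supposed domination becomes $T \geq T(p'_1, \ldots, p'_j)$ with $(p'_1, \ldots, p'_j)$ a prime tower of degree $m$, contradicting the hypothesis on $T$. I expect the only delicate points to be verifying that the lexicographic construction genuinely yields $T'|_{[m] \times [m]} = T$ (immediate from the definition, since the first $m$ terms of the sequence are $v_0, \ldots, v_{m-1}$ paired with the identity of $H$) and the identification of the subgroups in a lexicographic flag with the partial products of the tower; neither is a real obstacle, and the field case proceeds by replacing \textbf{subgroup} with \textbf{subfield} and \textbf{cardinality} with \textbf{dimension}.
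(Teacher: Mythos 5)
Your construction and the structural facts you prove about it are fine (the bottom restriction $T'|_{[m]\times[m]} = T$, the fact that $T(p'_1,\dots,p'_{t'})(m-1,m-1)=m-1$ forces $m=p'_1\cdots p'_j$, and the identification of the restriction with $T(p'_1,\dots,p'_j)$), but the argument proves the wrong statement. By assuming for contradiction that $T' \geq T(p'_1,\dots,p'_{t'})$ for \emph{some} prime tower, you only rule out $P_{T'} \subseteq P_{T(p'_1,\dots,p'_{t'})}$ for each tower \emph{individually}; the lemma asserts the strictly stronger conclusion $P_{T'} \not\subseteq \bigcup_{(p'_1,\dots,p'_{t'})} P_{T(p'_1,\dots,p'_{t'})}$. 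A full-dimensional polyhedron can easily be covered by a finite union of polyhedra without being contained in any single one, and the partial order on flag types only detects containment in a single $P_{T(\mfT')}$ (via $P_{T'}\subseteq P_{T(\mfT')} \iff T'\geq T(\mfT')$), so no amount of corner/domination bookkeeping of the kind you do can establish the union statement. Note also that your hypothesis on $T$ is the union statement in degree $m$, of which you only use the weaker consequence that $T\not\geq T(p_1,\dots,p_t)$ for each prime tower; this asymmetry is the symptom of the gap, and the union form is what the paper actually needs downstream (e.g.\ for \cref{minimalitycompletethmbetter}).

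The paper's proof is built around exactly this point: it chooses a concrete point $\mathbf{x} \in P_T \setminus \bigcup_{(p_1,\dots,p_t)}P_{T(p_1,\dots,p_t)}$, additionally off all hyperplanes $x_k = x_i+x_j$ (possible since $P_T$ has dimension $m-1$), lifts it to a single explicit point $\mathbf{x}' \in P_{T'}$, and then verifies that this one point violates a facet inequality of \emph{every} $P_{T(p'_1,\dots,p'_{t'})}$ of degree $n$ — in the prime-at-the-bottom construction the two cases $p'_1\neq p$ and $p'_1=p$ are handled by the inequalities at $(1,p-1)$ and at $(pi,pj)$ respectively. Your top-stacked flag could plausibly support a similar point-lifting argument (set $x'_{a+bm}=x_a+Cb$ for large $C$ and treat separately the towers for which $m$ is, or is not, a partial product), but the case where $m$ is not a partial product requires exhibiting a violated inequality and is not covered by anything you wrote. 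A smaller inaccuracy: your parenthetical that a tower of simple prime-degree extensions "exists over any field" is false (e.g.\ over an algebraically closed or real closed $L$); this is harmless only because realizability via abelian groups suffices for the intended applications, but as written the field case of your reduction is not justified.
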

\begin{proof}
The polyhedron $P_T$ has dimension $m-1$. Therefore, the set
\[
	S = \bigg(P_T \setminus \bigcup_{(p_1,\dots,p_t)}P_{T(p_1,\dots,p_t)}\bigg) \cap \{\mathbf{x} \in \RR^{m-1} \; \vert \; x_k \neq x_{i} + x_j \; \forall  \; 1 \leq i,j,k < m \}
\] 
is nonempty.

Let $G$ be a finite abelian group of cardinality $m$ with a flag $\cF$ realizing $T$. Let $\{v_0,\dots,v_{m-1}\}$ be a sequence of elements in $G$ such that $F_i = \{v_0,\dots,v_{i}\}$.

Let $G' = G \times C_p$, where $C_p$ is a cyclic group of order $p$. Let $e$ be a generator of $C_p$. Define the sequence $\{1 = v'_0,\dots,v'_{n-1} \}$  by $v'_i = v_{i_2}e^{i_1}$ for $i = i_1 + i_2 p$ in mixed radix notation with respect to $(p,m)$. Define a flag $\cF' = \{F'_i\}_{i \in [n]}$ of $G'$ by $F'_i = \{v'_0,\dots,v'_i\}$.

Choose $\mathbf{x} = (x_1,\dots,x_{m-1}) \in S$. Let
\[
	\epsilon = \min_{1 \leq i < m-1}\{x_{i+1} - x_i\}
\]
and note that $\epsilon \neq 0$. Define the point $\mathbf{x}' = (x_1',\dots,x_{n-1}') \in \RR^{n-1}$ by
\[
	x_i' = \epsilon \frac{i_1}{2p} + x_{i_2}.
\]

We will show that $\mathbf{x}' \in P_{T'}$. We first show that $0\leq x'_1\leq \dots\leq x'_{mp-1}$. For $1 \leq i < n-1$, write $i = i_1 + i_2 p$ in mixed radix notation with respect to $(p,m)$. If $i_1 \neq p-1$, then
\[
	x_{i+1}' = \epsilon \frac{i_1 + 1}{2p} + x_{i_2} \geq  \epsilon \frac{i_1}{2p} + x_{i_2} = x_i'.
\]
If $i_1 = p-1$ then 
\[
	x_{i+1}' = x_{i_2+1} \geq  \epsilon + x_{i_2} \geq \epsilon \frac{i_1}{2p} + x_{i_2} = x_i'.
\]

For any $1 \leq i,j,k < n$, we now show that $x'_{T_{\cF'}(i,j)} \leq x'_i + x'_j$. Let $k = T_{\cF'}(i,j)$. Write
\begin{align*}
i &= i_1 + i_2 p \\
j &= j_1 + j_2 p \\
k &= k_1 + k_2 p
\end{align*}
in mixed radix notation with respect to $(p,m)$. Then we have $k_1 \leq i_1+j_1$  so $x_{k_2}\leq x_{i_2}+x_{j_2}$. Then
\begin{align*}
x'_k &= \epsilon \frac{k_1}{2p} + x_{k_2} \\
&\leq \epsilon \frac{k_1}{2p} + x_{k_2}\\
&\leq \epsilon\bigg(\frac{i_1}{2p} + \frac{j_1}{2p}\bigg) + x_{i_2} + x_{j_2} \\
&\leq \epsilon \frac{i_1}{2p} + x_{i_2} + \epsilon \frac{j_1}{2p} + x_{j_2} \\
&\leq x'_i + x'_j.
\end{align*}

We will now prove that
\[
	\mathbf{x}' \notin \bigcup_{(p'_1,\dots,p'_t)}P_{T(p'_1,\dots,p'_t)}
\]
where $(p_1,\dots,p_t)$ ranges across all tuples with prime entires such that $\prod_i p'_i = n$. Fix such a tuple $(p'_1,\dots,p'_t)$. If $p'_1 \neq p$ then
\[
	x_1'+x_{p-1}' = \frac{\epsilon}{2} < x_1 = x'_p,
\]
so $\mathbf{x}' \notin P_{T(p_1,\dots,p_t)}$ because $1 + (p-1)$ does not overflow modulo $(p_1,\dots,p_t)$ and $x_p' > x'_1 + x'_{p-1}$. If $p'_1 = p$ then $p'_2\dots p'_t = m$. Because $\mathbf{x} \notin P_{T(p_2',\dots,p_t')}$, we can choose $1 \leq i \leq j < i+j < m$ such that $i+j$ does not overflow modulo $(p'_2,\dots,p'_t)$ and $x_{i+j} > x_i + x_j$. Note that $pi + pj$ does not overflow modulo $(p'_1,\dots,p'_t)$ and 
\[
	x_{pi+pj}' = x_{i+j} > x_{i} + x_{j} = x_{pi}' + x_{pj}'.
\]
Thus $\mathbf{x}' \notin P_{T(p'_1,\dots,p'_t)}$.
\end{proof}

\begin{proposition}
\label{p2q}
Let $p$ and $q$ be two distinct odd primes with $p < q$ and let $n = p^2q$. Then there exists a realizable flag type $T$ such that 
\[
	P_T \not \subseteq P_{T(p,p,q)} \cup P_{T(p,q,p)} \cup P_{T(q,p,p)}.
\]
\end{proposition}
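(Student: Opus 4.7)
The plan is to construct an explicit realizable flag type $T$ of degree $n=p^2q$ together with a point $\mathbf{x}\in P_T$ that simultaneously escapes each of the three polyhedra $P_{T(p,p,q)}$, $P_{T(p,q,p)}$, and $P_{T(q,p,p)}$. Note that $T$ itself cannot be of the form $T(\mfT)$ for any tower type $\mfT$: every composite tower refines to some prime tower, so by \cref{minimality}-style reasoning the associated $T(\mfT)$ dominates (is $\geq$) the corresponding prime flag type, placing $P_{T(\mfT)}$ inside the union we want to escape. Hence the construction must produce a ``genuinely new'' realizable flag type, arising from a flag whose ordering deviates from any standard lexicographic tower construction.

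First I would identify, for each prime tower type $\mfT_k\in\{(p,p,q),(p,q,p),(q,p,p)\}$, a pair $(i_k,j_k)$ of positive integers with $i_k+j_k<n$ that is a corner of $T(\mfT_k)$ (so $i_k+j_k$ does not overflow modulo $\mfT_k$) but for which $i_k+j_k$ does overflow modulo some proper divisor $m_k$ of $n$. The divisors $m_k$ must be chosen to be the orders of actual subgroups of an abelian group of order $n$. By \cref{overflowlemma}, if a flag $\cF$ of an abelian group of order $n$ satisfies $\lvert\Stab(F_{i_k}F_{j_k})\rvert=m_k$ for each $k$, then $T_\cF(i_k,j_k)<i_k+j_k=T(\mfT_k)(i_k,j_k)$, which gives $T_\cF\not\geq T(\mfT_k)$ for each $k$.

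Next I would construct such a flag $\cF$ in an abelian group $G$ of order $n$, for instance $G=C_{p^2}\times C_q$ or $G=C_p\times C_p\times C_q$. The flag is built inductively: the initial segment $F_{p-1}$ is a subgroup of order $p$, forcing an early nontrivial stabilizer, and subsequent $F_i$'s are chosen as coset representatives of various subgroups of $G$ so that, at each target index pair $(i_k,j_k)$, the sumset $F_{i_k}F_{j_k}$ has stabilizer of order exactly $m_k$. The flag type $T_\cF$ is realizable by construction, and all its corners can be identified by direct computation assisted by \cref{cornerfacetthm} and \cref{overflowlemma}. Finally, I would construct the point $\mathbf{x}\in\RR^{n-1}$ by letting $x_i$ be a piecewise-affine function of $i$ with jumps at the indices where new subgroups appear in the filtration of $\cF$, plus small $\epsilon$-perturbations inserted as in the rescaling argument of \cref{refconjcounterexextensionlemma}. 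This $\mathbf{x}$ lies in $P_{T_\cF}$ by design and violates the facet inequality $x_{i_k+j_k}\leq x_{i_k}+x_{j_k}$ for each $k$, placing $\mathbf{x}$ outside each of the three prime polyhedra.

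The main obstacle is the joint realizability of the three stabilizer conditions within a single flag. The subgroups of $G$ of orders $m_1,m_2,m_3$ must be chosen compatibly, so that the nested inclusions $F_{p-1}\subset F_{i_k}$ and the coset structures needed to produce the desired stabilizers at $(i_k,j_k)$ are all consistent with the group structure of $G$; certain triples $(m_1,m_2,m_3)$ are ruled out by the kind of subgroup-intersection argument that drives the contradiction in the proof of \cref{12}. Identifying a compatible triple, building the flag explicitly (the fact that $n$ has two distinct odd prime factors with $p^2\mid n$ gives enough room, whereas $n=12$ does not), and then verifying that $\mathbf{x}$ satisfies every facet inequality of $P_{T_\cF}$ is the technical heart of the proof.
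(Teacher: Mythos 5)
Your overall strategy coincides with the paper's: build one explicit flag $\cF$ of an abelian group of order $p^2q$ whose ordering is not a tower ordering, and exhibit a \emph{single} point of $P_{T_{\cF}}$ violating, for each of the three prime tower types, an inequality that is valid on the corresponding polyhedron. (The paper does exactly this with the group $C_q\times C_p\times C_p$, an ordering that first lists $\langle e_1\rangle$ and then interleaves the two order-$p$ generators, the collapsed products $F_1F_{q-1}=F_{q-1}$ and $F_{pq+1}F_{pq+p-1}=F_{2pq+p-1}$, and an explicit point with $x_q>x_1+x_{q-1}$ and $x_{2pq+p}>x_{pq+1}+x_{pq+p-1}$, the first inequality handling both $T(p,p,q)$ and $T(p,q,p)$.) Your observation that no $T(\mfT)$ can serve as the witness is also correct.

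As written, though, there is a genuine gap. First, the one concrete inference you rely on runs backwards: \cref{overflowlemma} deduces the overflow and the coset structure of $F_{i}F_{j}$ from the hypothesis $\lvert F_iF_j\rvert\le i+j$; prescribing $\lvert\Stab(F_{i_k}F_{j_k})\rvert=m_k$ does not by itself give $T_{\cF}(i_k,j_k)<i_k+j_k$, since a union of $m_k$-cosets can easily have cardinality larger than $i_k+j_k$. What is needed, and what the paper's proof actually supplies, is an ordering for which the relevant products literally collapse (e.g.\ $F_1F_{q-1}=F_{q-1}$ because $F_{q-1}$ is the subgroup $\langle e_1\rangle$ and $v_1=e_1$); your suggested alternative $G=C_{p^2}\times C_q$ is also dubious here, since it has only one subgroup of order $p$ while the construction needs two independent order-$p$ directions. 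Second, and more seriously, the substance of the proposition is deferred: knowing $T_{\cF}\not\ge T(\mfT_k)$ for each $k$ only yields, for each $k$ separately, some point of $P_{T_{\cF}}$ outside $P_{T(\mfT_k)}$, not one point outside the union; producing a point that simultaneously satisfies every inequality $x_{T_{\cF}(i,j)}\le x_i+x_j$ and violates the chosen target inequalities is precisely the long case-by-case verification that constitutes the paper's proof. Until the flag, the pairs $(i_k,j_k)$, the point $\mathbf{x}$, and that verification are made explicit, the proposal is a plausible plan rather than a proof.
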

\begin{proof}

Let $G = C_q \times C_p \times C_q$ and let $e_i$ be a generator of the $i$-th component of $G$.

Define a sequence $\{1=v_0,\dots,v_{n - 1}\}$ as follows. For $0 \leq i < q$, set $v_i = e_1^{i}$. For $q \leq i < n$ and $1 \leq i' < n$, write $i' = i'_1 + i'_2p + i_3'pq$ in mixed radix notation with respect to $(p,q,p)$. 
Inductively define $v_i$ as follows. Choose $i'$ minimal such that $e_2^{i'_1}e_1^{i'_2}e_3^{i'_3} \notin \{v_0,\dots,v_{i-1}\}$, and set $v_i = e_2^{i'_1}e_1^{i'_2}e_3^{i'_3}$. Observe that for $i \geq pq$, $e_2^{i'_1}e_1^{i'_2}e_3^{i'_3} = e_2^{i_1}e_1^{i_2}e_3^{i_3}$.
Define a flag $\cF = \{F_i\}_{i \in [n]}$ by $F_i = \{v_0,\dots,v_i\}$.

Because
\begin{align*}
F_1  &= \{ 1, e_1 \} \\
F_{q-1} &= \la e_1 \ra \\
F_{pq+1} &= \la e_1,e_2 \ra \cup \{ e_3, e_2e_3 \} \\
F_{pq+p-1} &= \la e_1,e_2 \ra \cup \la e_2 \ra\{ e_3\} = \la e_2\ra( \la e_1\ra \cup \{e_3\}) \\
F_{2pq+p-1} &= \la e_1, e_2\ra \{1, e_3\} \cup \la e_2\ra \{e_3^2\},
\end{align*}
we have 
\[
F_1 F_{q-1} = F_{q-1}
\]
and
\begin{align*}
F_{pq+1}F_{pq+p-1} &= (\la e_1, e_2 \ra \cup \{ e_3, e_2e_3 \})(\la e_2 \ra (\la e_1 \ra \cup \{e_3\}))\\
&= \la e_1, e_2\ra \{1, e_3\} \cup \la e_2 \ra\{ e_3^2\} \\
&= F_{2pq+p-1}.
\end{align*}

We claim there exists $\mathbf{x} \in P_{T_{\cF}}$ such that $x_q > x_1 + x_{q-1}$ and $x_{2pq+p} > x_{pq + 1} + x_{pq + p - 1}$. In particular, for $1 \leq i < q$ set $x_i = \frac{i}{2q}$. For $q \leq i < pq$ set $x_i = 1$. For $pq \leq i < p^2q$ write $i = i_1 + i_2p + i_3pq$ in mixed radix notation with respect to $(p,q,p)$ and set $x_i = i_1 \frac{1}{4pq} + i_2 \frac{1}{2q} + i_3$. 

We first show that $0 \leq x_1 \leq \dots x_{n-1}$. If $1\leq i < q-1$, then
\[
	x_{i+1} = \frac{i+1}{2q} \geq \frac{i}{2q} = x_i.
\]
Note also that
\[
	x_q = 1 > \frac{q-1}{2q} = x_{q-1}
\]
If $q\leq i < pq-1$, 
\[
	x_{i+1} = 1 = x_i.
\]
If $pq\leq i < n-1$ then write $i+1 = (i+1)_1 + (i+1)_2p + (i+1)_3pq$ in mixed radix notation with respect to $(p,q,p)$ as well. Then if $i_1 = p-1$ and $i_2 = q-1$ then $(i+1)_1 = 0$ and $(i+1)_2 = 0$ and $(i+1)_3 = i_3 + 1$. Then
\begin{align*}
x_{i+1} &= (i+1)_1 \frac{1}{4pq} + (i+1)_2 \frac{1}{2q} + (i+1)_3 \\
&= (i_3 + 1) \\
& > i_1 \frac{1}{4pq} + i_2 \frac{1}{2q} + i_3 \\
&= x_i.
\end{align*}
Instead if $i_1 = p-1$ and $i_2 \neq q-1$ then $(i+1)_1 = 0$ and $(i+1)_2 = i_2 + 1$ and $(i+1)_3 = i_3$. Then
\begin{align*}
x_{i+1} &= (i+1)_1 \frac{1}{4pq} + (i+1)_2 \frac{1}{2q} + (i+1)_3 \\
&= (i_2+1) \frac{1}{2q} + i_3 \\
&> i_1 \frac{1}{4pq} + i_2 \frac{1}{2q} + i_3 \\
&= x_i.
\end{align*}
Finally, if $i_1 \neq p-1$ then $(i+1)_1 = i_1+1$ and $(i+1)_2 = i_2$ and $(i+1)_3 = i_3$. Then
\begin{align*}
x_{i+1} &= (i+1)_1 \frac{1}{4pq} + (i+1)_2 \frac{1}{2q} + (i+1)_3 \\
&= (i_1+1) \frac{1}{4pq} + i_2\frac{1}{2q} + i_3 \\
&> i_1 \frac{1}{4pq} + i_2 \frac{1}{2q} + i_3 \\
&= x_i.
\end{align*}
Therefore, $0 \leq x_1 \leq \dots x_{n-1}$. 

We now show that for all $1\leq i,j < n$, we have $x_{T(i,j)} \leq x_i + x_j$. Let $k = T(i,j)$.

\noindent \textbf{Case 1: $1 \leq i < q$ and $1 \leq j < q$}.
Then $k = \min(q-1, i+j)$. Thus
\[
	x_k = \frac{k}{2q} \leq \frac{i}{2q} + \frac{j}{2q} = x_i + x_j.
\]
\noindent \textbf{Case 2: $1 \leq i < q$ and $q \leq j < pq$}.
Then as $F_{pq-1} = \la e_1,e_2 \ra$, we have $j < k < pq$. Thus
\[
	x_k = 1 \leq \frac{i}{2q} + 1 = x_i + x_j.
\]
\noindent \textbf{Case 3: $1 \leq i < q$ and $pq \leq j < n$}.
Write $j = j_1 + j_2p + j_3pq$ in mixed radix notation with respect to $(p,q,p)$. Then $j < k \leq j_1 + \min(q-1,i+j_2)p + j_3pq$. Then
\begin{align*}
x_k &\leq x_{j_1 + \min(q-1,i+j_2)p + j_3pq} \\
&= j_1 \frac{1}{4pq} + \min(q-1,i+j_2) \frac{1}{2q} + j_3 \\
&\leq \frac{i}{2q} + j_1 \frac{1}{4pq} + j_2 \frac{1}{2q} + j_3 \\
&= x_i + x_j.
\end{align*}

\noindent \textbf{Case $4$: $q \leq i < pq$ and $q \leq j < pq$}.
Then as $F_{pq-1} = \la e_1,e_2 \ra$, we have $j < k < pq$. Thus
\[
	x_k = 1 \leq \frac{i}{2q} + 1 = x_i + x_j.
\]
\noindent \textbf{Case $4$: $q \leq i < pq$ and $pq \leq j < n$}.
Write $j = j_1 + j_2p + j_3pq$ in mixed radix notation with respect to $(p,q,p)$. Because $F_{pq-1} = \la e_1,e_2\ra$, then $j < k \leq (p-1) + (q-1)p + j_3pq$. Then
\begin{align*}
x_k &\leq x_{(p-1) + (q-1)p + j_3pq} \\
& = (p-1) \frac{1}{4pq} + (q-1) \frac{1}{2q} + j_3 \\
& \leq 1 + j_3 \\
&= 1 + j_1 \frac{1}{4pq} + j_2 \frac{1}{2q} + j_3  \\
&= x_i + x_j.
\end{align*}

\noindent \textbf{Case 5: $pq \leq i < n$}.
Write
\begin{align*}
i &= i_1 + i_2p + i_3pq \\
j &= j_1 + j_2p + j_3pq
\end{align*}
in mixed radix notation with respect to $(p,q,p)$. Recall that for $i \geq pq$, we have $v_i = e_2^{i_1}e_1^{i_2}e_3^{i_3}$. Thus, $j < k \leq \min(p-1,i_1+j_1) + \min(q-1,i_2+j_2)p + (i_3 + j_3)pq$.  Then
\begin{align*}
x_k &\leq x_{\min(p-1,i_1+j_1) + \min(q-1,i_2+j_2)p + (i_3 + j_3)pq} \\
& = \min(p-1,i_1+j_1) \frac{1}{4pq} + \min(q-1,i_2+j_2)\frac{1}{2q} + (i_3 + j_3) \\
&\leq i_1 \frac{1}{4pq} + i_2 \frac{1}{2q} + i_3 + j_1 \frac{1}{4pq} + j_2 \frac{1}{2q} + j_3 \\
&=x_i + x_j.
\end{align*}

Thus, for all $1 \leq i,j < n$, we have if $x_{T(i,j)} \leq x_i + x_j$. Moreover, we have that
\[
	x_q = 1 > \frac{1}{2q} + \frac{q-1}{2q} = x_1 + x_{q-1}
\]
and
\[
	x_{2pq+p} = \frac{1}{2q} + 2 > \bigg (\frac{1}{4pq} + 1 \bigg) + \bigg (\frac{p-1}{4pq} + 1 \bigg) = x_{pq+1} + x_{pq + p - 1}.
\]
Moreover 
\[
	P_{T(p,p,q)} \cup P_{T(p,q,p)} \subseteq \{\mathbf{x} \in \RR^{p^2q-1} \mid x_{q} \leq x_1 + x_{q-1} \}
\]
and
\[
	P_{T(q,p,p)} \subseteq \{\mathbf{x} \in \RR^{p^2q-1} \mid x_{2pq+p} \leq x_{pq+1} + x_{pq + p - 1}\}.
\]

However, 
\[
	\mathbf{x} \notin P_{T(p,p,q)} \cup P_{T(p,q,p)} \cup P_{T(q,p,p)},
\]
which completes our proof.
\end{proof}

\begin{lemma}
\label{helperhelperlemma}
Let $q$ be an odd prime. For $a \in \ZZ/q\ZZ$, we have
\[
a\bigg\{\frac{q+1}{2},\dots,q-1\bigg\} = \bigg\{\frac{q+1}{2},\dots,q-1\bigg\} \Mod{q}
\]
if and only if $a \equiv 1 \Mod{q}$.
\end{lemma}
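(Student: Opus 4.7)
The plan is as follows. The reverse implication is immediate: if $a \equiv 1 \pmod{q}$ then $aS = S$ trivially. For the forward direction, I would first reformulate the hypothesis. The set $S = \{(q+1)/2, \dots, q-1\}$ is the negative of $T \coloneqq \{1, 2, \dots, (q-1)/2\}$ in $\ZZ/q\ZZ$, since $s \mapsto q - s$ is a bijection $S \to T$. Hence $aS = S \pmod{q}$ is equivalent to $aT = T \pmod{q}$, so it suffices to prove the statement with $T$ in place of $S$.

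Assuming $aT = T$ as subsets of $\ZZ/q\ZZ$, the key step is to sum their elements. Since the two sides are equal as subsets, summing and reducing modulo $q$ gives
\[
a \sum_{t \in T} t \equiv \sum_{t \in T} t \pmod{q},
\]
which rearranges to $(a-1)\sum_{t\in T} t \equiv 0 \pmod{q}$.

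Finally, I would compute $\sum_{t=1}^{(q-1)/2} t = \frac{(q-1)(q+1)}{8} = \frac{q^2-1}{8}$. This is an integer because $q^2 \equiv 1 \pmod{8}$ for any odd $q$, and it is coprime to $q$ because $q^2 - 1 \equiv -1 \pmod{q}$ and $q \neq 2$. Thus $(q^2-1)/8$ is a unit in $\ZZ/q\ZZ$, and we may cancel it to conclude $a \equiv 1 \pmod{q}$. There is no serious obstacle here; the only subtlety is verifying that the sum is nonzero modulo $q$, which is precisely where the hypotheses that $q$ is an odd prime enter.
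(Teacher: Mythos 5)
Your proof is correct, but it takes a genuinely different route from the paper. The paper first disposes of $q=3$, then argues order-theoretically: since $a(q-1)\equiv -a$ must land in $\{\tfrac{q+1}{2},\dots,q-1\}$, one gets $a\in\{1,\dots,\tfrac{q-1}{2}\}\pmod q$, and then for $a\not\equiv 1$ it exhibits (implicitly) some $b\in\{1,\dots,\tfrac{q-1}{2}\}$ with $ab$ falling in $\{\tfrac{q+1}{2},\dots,q-1\}$, contradicting invariance; making that existential claim explicit requires a small but slightly fiddly computation such as taking $b=\lfloor\tfrac{q-1}{2a}\rfloor+1$. You instead pass to $T=\{1,\dots,\tfrac{q-1}{2}\}$ via $S=-T$ and sum the elements: from $aT=T$ you get $(a-1)\sum_{t\in T}t\equiv 0\pmod q$ with $\sum_{t\in T}t=\tfrac{q^2-1}{8}$ a unit modulo $q$, forcing $a\equiv 1$. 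This is uniform in $q$ (no separate $q=3$ case), replaces the combinatorial existence step by an algebraic identity, and is fully rigorous as written; the only implicit point worth stating is that multiplication by $a$ is a bijection of $\ZZ/q\ZZ$ (and $a=0$ is impossible since $0\notin T$), so the set equality really does give the equality of sums. The paper's argument, by contrast, uses only the ordering of residues and no arithmetic of the sum, but at the cost of the unverified choice of $b$. Either proof serves the application in \cref{pqrhelperlemma} equally well.
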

\begin{proof}
The statement 
\[
a\bigg\{\frac{q+1}{2},\dots,q-1\bigg\} = \bigg\{\frac{q+1}{2},\dots,q-1\bigg\} \Mod{q}
\]
is equivalent to the statement 
\[
a\bigg\{1,\dots,\frac{q-1}{2}\bigg \} = \bigg\{1,\dots,\frac{q-1}{2}\bigg \} \Mod{q}.
\]
If $q = 3$, it is clear that $a \equiv 1 \Mod{q}$; assume $q \neq 3$, and hence $q \geq 5$. Then as $a(q-1) \equiv -a \in \{\frac{q+1}{2},\dots,q-1\} \Mod{q}$, we must have $a \in \{1,\dots,\frac{q-1}{2}\} \Mod{q}$. If $a \not\equiv 1 \Mod{q}$, then there exists $b \in \{1,\dots,\frac{q-1}{2}\} \Mod{q}$ such that $ab \in \{\frac{q+1}{2},\dots,q-1\} \Mod{q}$, which is a contradiction.
\end{proof}

\begin{lemma}
\label{pqrhelperlemma}
Let $p$, $q$, and $r$ be odd prime numbers such that $p < q \leq r$. There exists an integer $m$ such that 
\[
	q \leq m \leq \lfloor qr/2 \rfloor,
\]
the addition $pm + pm$ overflows modulo $q$, and the addition $m + m$ does not overflow modulo $q$ or modulo $r$.
\end{lemma}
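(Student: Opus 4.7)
The plan is to reformulate the three overflow conditions as modular constraints, construct a two-parameter family of candidate $m$'s satisfying two of them, and then rule out the bad scenario for the third via a structural claim about intervals modulo $r$.

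Set $S_q^* = \{1, \ldots, (q-1)/2\}$, $T_q = \{(q+1)/2, \ldots, q-1\}$, and analogously $S_r = \{0, 1, \ldots, (r-1)/2\}$, $T_r = \{(r+1)/2, \ldots, r-1\}$. The three conditions on $m$ translate to $m \bmod q \in S_q^*$, $pm \bmod q \in T_q$, and $m \bmod r \in S_r$. First I would let $A = S_q^* \cap p^{-1} T_q \pmod{q}$; since $p < q$ is an odd prime we have $p \not\equiv 1 \pmod{q}$, so \cref{helperhelperlemma} gives $p^{-1} T_q \ne T_q$, and as both sit inside $S_q^* \sqcup T_q$ with equal cardinality, $A$ is nonempty.

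Next I would consider $m = a + qj$ for $a \in A$ and $j \in \{1, \ldots, (r-1)/2\}$. Since $a \leq (q-1)/2$, one verifies $q < m \leq (qr-1)/2 = \lfloor qr/2 \rfloor$, and the first two conditions hold automatically. The problem then reduces to producing some pair $(a, j)$ with $(a + qj) \bmod r \in S_r$. Suppose for contradiction that $(a + qj) \bmod r \in T_r$ for all such $(a, j)$. When $q = r$ this forces $a \in T_r$, immediately contradicting $a \in S_q^* \subseteq S_r$. Otherwise $\gcd(q, r) = 1$, and $B_a := a + q\{1, \ldots, (r-1)/2\} \pmod{r}$ has $(r-1)/2$ distinct elements; containment in $T_r$ of equal cardinality forces $B_a = T_r$, equivalently $qS_r^* = T_r - a \pmod{r}$ --- exhibiting $qS_r^*$ as an interval of $(r-1)/2$ consecutive residues modulo $r$.

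The crux is then the structural claim that $qS_r^* \pmod{r}$ is such an interval if and only if $q \equiv \pm 1 \pmod{r}$. I would prove this via the $+1$-successor map: view $qS_r^*$ as a vertex set in $\ZZ/r\ZZ$ with an edge $y \to y + 1$ whenever $y + 1 \in qS_r^*$. The set is an interval iff the functional graph is a single path, equivalently exactly one vertex has out-degree $0$; any cycle would span all of $\ZZ/r\ZZ$, which is impossible since $|qS_r^*| < r$. Writing $d = q^{-1} \pmod{r}$, the number of vertices of out-degree $1$ equals $|S_r^* \cap (S_r^* - d)|$, and a case split on $1 \leq d \leq (r-1)/2$ versus $(r+1)/2 \leq d \leq r - 1$ shows this equals $(r-1)/2 - d$ and $d - (r+1)/2$ respectively; setting this equal to $(r-3)/2$ forces $d \in \{1, r-1\}$, i.e.\ $q \equiv \pm 1 \pmod{r}$. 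Since $q$ is an odd prime with $5 \leq q \leq r$ and $r - 1$ is even, $q \not\equiv \pm 1 \pmod{r}$, giving the desired contradiction. The main obstacle is this structural claim; once the functional-graph perspective is in place, the case analysis is routine but must be tracked carefully at the boundaries $d = (r-1)/2$ and $d = (r+1)/2$.
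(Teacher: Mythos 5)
Your proof is correct, and up to the choice of the residue class it starts the same way as the paper: both arguments use \cref{helperhelperlemma} (applied to $p^{-1}$) to produce a residue $a \bmod q$ with $a \in \{1,\dots,\tfrac{q-1}{2}\}$ and $pa \bmod q \in \{\tfrac{q+1}{2},\dots,q-1\}$, and both then look for a lift $m \equiv a \pmod q$ in $[q,\lfloor qr/2\rfloor]$ whose residue mod $r$ lies in $\{0,\dots,\tfrac{r-1}{2}\}$. The paper treats $q=r$ by a separate explicit choice $m = q + \lfloor q/p\rfloor$, whereas your uniform family $m = a+qj$ disposes of that case for free. The real divergence is in the case $q<r$: the paper asserts there are $\tfrac{r+1}{2}$ lifts of $a$ in $[q,\lfloor qr/2\rfloor]$ and concludes by pigeonhole that one of them is small mod $r$; in fact the lifts are exactly $a+qj$ for $j=1,\dots,\tfrac{r-1}{2}$, so there are only $\tfrac{r-1}{2}$ of them, and since the forbidden set $\{\tfrac{r+1}{2},\dots,r-1\}$ mod $r$ has exactly $\tfrac{r-1}{2}$ elements, the pigeonhole as written is inconclusive in the extremal case where the lifts hit the forbidden set bijectively. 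Your argument is precisely a repair of this: you observe that the extremal case forces $q\{1,\dots,\tfrac{r-1}{2}\}$ to be a block of $\tfrac{r-1}{2}$ consecutive residues mod $r$, and your successor-map count (the number of $s$ with $s,\,s+q^{-1}$ both in $\{1,\dots,\tfrac{r-1}{2}\}$ must equal $\tfrac{r-3}{2}$) correctly shows this happens only when $q\equiv\pm1\pmod r$, which is impossible for an odd prime $3\le q<r$. So your route costs an extra structural lemma that the paper's (intended) pigeonhole avoids, but it actually closes the off-by-one gap in the published argument and yields a single construction covering $q=r$ and $q<r$ simultaneously; the case analysis at $d=\tfrac{r\pm1}{2}$ that you flag checks out, and the only cosmetic slip is writing $q\le r$ where that branch has $q<r$ (harmless, since $q=r$ was already settled).
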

\begin{proof}
If $q = r$ then let
\[
	m = q + \bigg \lfloor \frac{q}{p} \bigg \rfloor.
\] 
Note that $m\perc q =  \lfloor q/p \rfloor$, as $0 \leq \lfloor q/p \rfloor < q$. As $0 \leq 2 \lfloor \frac{q}{p} \rfloor < 2q/p \leq q$, we have $(2m)\perc q = 2\lfloor q/p \rfloor$ and thus $m\perc q + m\perc q = (2m)\perc q$, so $m + m$ does not overflow modulo $q$.

On the other hand,  $pm = pq + p\lfloor q/p\rfloor$ and $0 \leq p\lfloor q/p\rfloor < q$, so $(pm)\perc q = p\lfloor q/p\rfloor$. Because $p < q$, we have $q\perc p \leq q/2$. Therefore,
\begin{align*}
(pm)\perc q + (pm)\perc q &= 2p\bigg \lfloor \frac{q}{p} \bigg \rfloor \\
&= 2p\bigg (\frac{q}{p}-\frac{q\perc p}{p}\bigg) \\
&\geq  2p\bigg (\frac{q}{p}-\frac{q}{2p}\bigg) \\
&= q,
\end{align*}
so the addition $pm + pm$ overflows modulo $q$.

Because $p^{-1} \neq 1 \Mod{q}$, by \cref{helperhelperlemma} the set
\[
	\bigg\{1,\dots,\frac{q-1}{2}\bigg\} \bigcap p^{-1}\bigg\{\frac{q+1}{2},\dots,q-1\bigg\} \Mod{q}
\]
is nonempty. Choose an element $\ell \in \ZZ/q\ZZ$ contained in the set above. Observe that 
\[
	q(r-1)/2 + (q-1)/2 = \bigg \lfloor \frac{qr}{2} \bigg \rfloor.
\]
and let
\[
	q \leq \ell_1,\dots, \ell_{\frac{r+1}{2}} 
\]
be the lifts of $\ell$ to $[q, \lfloor qr/2 \rfloor]$. Because $q \neq r$, the lifts $\ell_1,\dots, \ell_{\frac{r+1}{2}}$ all have distinct values modulo $r$ by the Chinese remainder theorem. Thus, there exists $\ell_k$ such that $\ell_k \in \{0,\dots, \frac{r-1}{2}\} \Mod{r}$. Set $m = \ell_k$.

To see that the addition $pm + pm$ overflows modulo $q$, notice that $m \in p^{-1}\{\frac{q+1}{2},\dots,q-1\} \Mod{q}$, so $(pm)\perc q \geq \frac{q+1}{2}$, and hence 
\[
	(pm)\perc q + (pm)\perc q \geq q+1.
\]

To see that the addition $m + m$ does not overflow modulo $q$ or $r$, observe that $m \in \{1,\dots,\frac{q-1}{2}\} \Mod{q}$, hence
\[
	m\perc q + m\perc q < q.
\] 
Similarly, since $m \in \{1,\dots,\frac{r-1}{2}\} \Mod{r}$, we have
\[
	m\perc r + m\perc r < r.
\] 
\end{proof}

\begin{proposition}
\label{pqr}
Let $n = pqr$ for primes $p$, $q$, and $r$ with $p < q \leq r$. Then there exists a realizable flag type $T$ such that 
\[
	P_T \not \subseteq P_{T(p,q,r)} \cup P_{T(p,r,q)} \cup P_{T(q,p,r)}\cup P_{T(q,r,p)} \cup P_{T(r,p,q)} \cup P_{T(r,q,p)}.
\]
\end{proposition}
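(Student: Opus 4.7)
The plan is to build a realizable flag type $T_\cF$ of degree $n = pqr$ together with an explicit point $\mathbf{x} \in P_{T_\cF}$ that strictly violates a facet inequality at two well-chosen corners: one corner common to all three tower types starting with $p$, and a second corner common to all three tower types starting with $q$ or $r$. This parallels the twin-violation strategy used in the proof of \cref{p2q}.

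Let $G = C_p \times C_q \times C_r$ with generators $e_1, e_2, e_3$ of orders $p, q, r$, and fix $m$ as produced by \cref{pqrhelperlemma}. A direct mixed-radix computation (combined with $m \leq \lfloor qr/2 \rfloor$ to force $\lfloor m/q\rfloor < r/2$ and $\lfloor m/r\rfloor < q/2$) shows that $(pm, pm)$ is a corner of both $T(p,q,r)$ and $T(p,r,q)$. Independently, since $p < q \leq r$, the pair $(1, p-1)$, whose entries sum to $p$, is a corner of each of $T(q,p,r), T(q,r,p), T(r,p,q)$, and $T(r,q,p)$. It therefore suffices to find $\mathbf{x} \in P_{T_\cF}$ with
\[
    x_p > x_1 + x_{p-1} \quad\text{and}\quad x_{2pm} > x_{pm} + x_{pm}.
\]

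To construct $\cF$, I would set $v_i = e_1^i$ for $0 \leq i < p$ and, for $i \geq p$, choose $v_i = e_2^{i_1'} e_1^{i_2'} e_3^{i_3'}$ with $i' \geq 1$ minimal in mixed-radix $(q,p,r)$ such that this element is new. This mirrors the sequence used in the proof of \cref{p2q}; it forces $F_{p-1} = \langle e_1 \rangle$ (so that $T_\cF(1,p-1) = p-1 < p$) and, by filling one $\langle e_2 \rangle$-coset at a time, makes $F_{pm}$ the union of $\langle e_1, e_2\rangle$, some further complete $\langle e_2\rangle$-cosets, and a partial coset $B$ of size $(pm \perc q) + 1 \geq (q+3)/2$. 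Because $|B| > q/2$, the Cauchy--Davenport theorem applied inside the target $\langle e_2\rangle$-coset of $BB$ forces $BB$ to fill that entire coset; combined with the Kneser overflow lemma (\cref{overflowlemma}), this yields $\Stab(F_{pm}F_{pm}) \supseteq \langle e_2 \rangle$ and $|F_{pm}F_{pm}| \leq 2pm - 1$, so $T_\cF(pm,pm) < 2pm$.

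Finally, in analogy with the point construction in the proof of \cref{p2q}, I would define $\mathbf{x}$ by a piecewise formula: small fractional values on $[1, p-1]$ with a jump to $x_p$, gentle increments on $[p, pq-1]$, a carefully graded formula on $[pq, 2pm]$ designed to force the second jump at $x_{2pm}$, and a matching pattern through the remaining indices. One verifies $\mathbf{x} \in P_{T_\cF}$ by case analysis over the index regions $(i,j)$, tracking the explicit structure of $F_iF_j$ as in the proof of \cref{p2q}; the two desired strict inequalities then follow by substitution, so $\mathbf{x}$ lies outside each $P_{T(\mfT)}$. The main obstacle is precisely this verification, which must be extended to accommodate three distinct primes and the partial-coset structure of $F_{pm}$; I expect this to amount to a longer but mechanical variant of the case analysis carried out in the proof of \cref{p2q}.
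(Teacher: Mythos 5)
Your construction is the same as the paper's: the same group $C_p\times C_q\times C_r$, the same ordering of elements (powers of $e_1$ first, then the $(q,p,r)$-lexicographic fill), the same integer $m$ from \cref{pqrhelperlemma}, and the same pair of target inequalities $x_p>x_1+x_{p-1}$ and $x_{2pm}>2x_{pm}$, with each of the six polyhedra lying in one of the two corresponding half-spaces. Two things keep the proposal from being a proof. First, the inference ``$\Stab(F_{pm}F_{pm})\supseteq\la e_2\ra$ and $\lvert F_{pm}F_{pm}\rvert\le 2pm-1$, so $T_{\cF}(pm,pm)<2pm$'' is not valid as stated: $T_{\cF}(pm,pm)<2pm$ means $F_{pm}F_{pm}\subseteq F_{2pm-1}$, i.e.\ every product lies in the initial segment consisting of the first $2pm$ elements of your ordering, and no bound on the cardinality or stabilizer of the product set yields containment in an initial segment. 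What is needed, and what the paper checks, is the digit-wise containment $F_{pm}F_{pm}\subseteq\{e_2^{i_1}e_1^{i_2}e_3^{i_3}\mid i_1+i_2q+i_3pq\le (q-1)+\min(p-1,2(pm)_2)q+2(pm)_3pq\}$ (digits of $pm$ taken with respect to $(q,p,r)$), whose bound is at most $2pm-1$ precisely because the overflow gives $2(pm)_1\ge q$. This is repairable from the coset structure you describe, but the repair is the actual argument, not the cardinality count.

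Second, and more seriously, the decisive step is only promised: you never exhibit the point $\mathbf{x}$ nor verify $\mathbf{x}\in P_{T_{\cF}}$, and that verification is the bulk of the proof, since membership requires $x_{T_{\cF}(i,j)}\le x_i+x_j$ for \emph{all} pairs $(i,j)$, not merely consistency with the two inequalities you wish to violate. The paper does this with the explicit choice $x_i=\frac{i}{2p}$ for $1\le i<p$, $x_i=1$ for $p\le i<pq$, and $x_i=i_1\frac{1}{4pq}+i_2\frac{1}{2p}+i_3$ for $i\ge pq$ (digits with respect to $(q,p,r)$), followed by a case analysis over the index regions; the violation at $2pm$ then falls out of the carry forced by the overflow of $pm+pm$ modulo $q$, rather than from a hand-placed jump at $x_{2pm}$. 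Without such an explicit point and its verification, the assertion that the extension of the case analysis of \cref{p2q} is ``mechanical'' is exactly the content that remains to be proved.
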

\begin{proof}
Let $G = C_p \times C_q \times C_r$ and let $e_1, e_2, e_3$ be generators of each of the components. 

Define a sequence ${1=v_0,\dots,v_{n-1}}$ as follows. For $0 \leq i < p$, set $v_i = e_1^i$. For $p \leq i < n$ and $1 \leq i' < n$, write $i' = i'_1 + i'_2q + i_3'pq$ in mixed radix notation with respect to $(q,p,r)$. Inductively define $v_i$ as follows. Choose $i'$ minimal such that $e_2^{i'_1}e_1^{i'_2}e_3^{i'_3} \notin \{v_0,\dots,v_{i-1}\}$. Set $v_i = e_2^{i'_1}e_1^{i'_2}e_3^{i'_3}$. Observe that for $i \geq pq$, we have $i = i'$. Define a flag $\cF = \{F_i\}_{i \in [n]}$ by $F_i = \{v_0,\dots,v_i\}$.

By \cref{pqrhelperlemma}, there existss an integer $m$ such that 
\[
	q \leq m \leq \lfloor qr/2 \rfloor,
\]
the addition $pm + pm$ overflows modulo $q$, and the addition $m + m$ does not overflow modulo $q$ or modulo $r$. Moreover, 
\[
	2pm \leq 2p \lfloor qr/2 \rfloor < pqr.
\]

Write $pm = (pm)_1 + (pm)_2 q + (pm)_3 pq$ in mixed radix notation with respect to $(q,p,r)$. Then we have
\begin{align*}
F_1 &= \{1, e_1 \} \\
F_{p-1} &= \la e_1 \ra \\
F_{pm} &= \{e_2^{i_1}e_1^{i_2}e_3^{i_3} \; \mid \; i_1 + i_2q + i_3pq \leq pm, \; 0 \leq i_1 < q, \; 0 \leq i_2 < p, \; 0 \leq i_3\} \\
F_{2pm-1} &= \{e_2^{i_1}e_1^{i_2}e_3^{i_3} \; \mid \; i_1 + i_2q + i_3pq \leq 2pm-1, \; 0 \leq i_1 < q, \; 0 \leq i_2 < p, \; 0 \leq i_3\}.
\end{align*}
We have 
\[
F_1F_{p-1}  = F_{p-1}.
\]
Moreover, because the addition $pm + pm$ overflows modulo $q$, we have $(pm)_1 + (pm)_1 \geq q$. We have that
\begin{align*}
F_{pm}F_{pm} \subseteq \{e_2^{i_1}e_1^{i_2}e_3^{i_3} \; \mid \; &i_1 + i_2q + i_3pq \leq (q-1) + \min(p-1,2(pm)_2) q + 2(pm)_3 pq, \\
&\; 0 \leq i_1 < q, \; 0 \leq i_2 < p, \; 0 \leq i_3\}.
\end{align*}

Because $i_1 + i_2q + i_3pq \leq (q-1) + \min(p-1,2(pm)_2) q + 2(pm)_3 pq \leq 2pm-1$, we have
\[
	F_{pm}F_{pm} \subseteq F_{2pm-1}.
\]
We claim there exists $\mathbf{x} \in P_{T_{\cF}}$ such that $x_p > x_1 + x_{p-1}$ and $x_{2pm} > 2x_{pm}$. For $1 \leq i < p$ set $x_i = \frac{i}{2p}$. For $p \leq i < pq$ set $x_i = 1$. For $pq \leq i < pqr$ write $i = i_1 + i_2q + i_3pq$ in mixed radix notation with respect to $(q,p,r)$ and set $x_i = i_1 \frac{1}{4pq} + i_2 \frac{1}{2p} + i_3$. 

We first show that $0 \leq x_1 \leq \dots \leq x_{n-1}$. If $1 \leq i < p-1$, then
\[
	x_{i + 1} = \frac{i+1}{2p} \geq \frac{i}{2p} = x_i.
\]
Note also that
\[
	x_p = 1 \geq \frac{p-1}{2p} = x_{p-1}.
\]
If $p \leq i < pq-1$,
\[
	x_{i+1} = 1 = x_i.
\]
If $pq \leq i < n-1$ then write $i+1 = (i+1)_1 + (i+1)_2q + (i+1)pq$ in mixed radix notation with respect to $(q,p,r)$. If $i_1=q-1$ and $i_2 = p-1$ then $(i+1)_1 = (i+1)_2 = 0$ and $(i+1)_3 = i_3 + 1$. Then
\begin{align*}
x_{i+1} &= (i+1)_1 \frac{1}{4pq} + (i+1)_2 \frac{1}{2p} + (i+1)_3 \\
&= i_3 + 1 \\
&> i_1 \frac{1}{4pq} + i_2 \frac{1}{2p} + i_3 \\
&= x_{i}.
\end{align*}

Instead if $i_1=q-1$ and $i_2 \neq p-1$ then $(i+1)_1 = 0$ and $(i+1)_2=i_2+1$ and $(i+1)_3 = i_3$. Then
\begin{align*}
x_{i+1} &= (i+1)_1 \frac{1}{4pq} + (i+1)_2 \frac{1}{2p} + (i+1)_3 \\
&= (i_2+1) \frac{1}{2p} + i_3 \\
&> i_1 \frac{1}{4pq} + i_2 \frac{1}{2p} + i_3 \\
&= x_{i}.
\end{align*}

Finally, if $i \neq q-1$ then $(i+1)_1 = i_1+1$ and $(i+1)_2=i_2$ and $(i+1)_3 = i_3$. Then
\begin{align*}
x_{i+1} &= (i+1)_1 \frac{1}{4pq} + (i+1)_2 \frac{1}{2p} + (i+1)_3 \\
&= (i_1+1) \frac{1}{4pq} + i_2 \frac{1}{2p} + i_3 \\
&> i_1 \frac{1}{4pq} + i_2 \frac{1}{2p} + i_3 \\
&= x_{i}.
\end{align*}

Therefore, $0 \leq x_1 \leq \dots \leq x_{n-1}$.

We now show that for all $1 \leq i,j < n$, we have $x_{T_{\cF}(i,j)} \leq x_i + x_j$.

\noindent \textbf{Case $1$: $1 \leq i < p$ and $1 \leq j < p$}.
Then $k = \min(p-1, i+j)$. Then
\[
	x_k = \frac{k}{2p} \leq \frac{i}{2p} + \frac{j}{2p} = x_i + x_j.
\]
\noindent \textbf{Case $2$: $1 \leq i < p$ and $p \leq j < pq$}.
Then as $F_{pq-1} = \la e_1,e_2\ra$, we have $j < k < pq$. Thus
\[
	x_k = 1 \leq \frac{i}{2p} + 1 = x_i + x_j.
\]
\noindent \textbf{Case $3$: $1 \leq i < p$ and $pq \leq j < n$}.
Write $j = j_1 + j_2q + j_3pq$ in mixed radix notation with respect to $(q,p,r)$. Then $j < k \leq j_1 + \min(p-1,i+j_2)q + j_3pq$. Then
\begin{align*}
x_k &\leq x_{j_1 + \min(p-1,i+j_2)q + j_3pq} \\
&= j_1 \frac{1}{4pq} + \min(p-1,i+j_2) \frac{1}{2p} + j_3 \\
&\leq \frac{i}{2p} + j_1 \frac{1}{4pq} + j_2 \frac{1}{2p} + j_3 \\
&= x_i + x_j.
\end{align*}

\noindent \textbf{Case $4$: $p \leq i < pq$ and $p \leq j < pq$}.
Then as $F_{pq-1} = \la e_1,e_2\ra$, we have $j < k < pq$. Thus
\[
	x_k = 1 \leq \frac{i}{2q} + 1 = x_i + x_j.
\]
\noindent \textbf{Case $5$: $p \leq i < pq$ and $pq \leq j < n$}.
Write $j = j_1 + j_2q + j_3pq$ in mixed radix notation with respect to $(q,p,r)$. Because $F_{pq-1} = \la e_1,e_2\ra$ we have $j < k \leq (q-1) + (p-1)q + j_3pq$. Then
\begin{align*}
x_k &\leq x_{(q-1) + (p-1)q + j_3pq} \\
& = (q-1) \frac{1}{4pq} + (p-1) \frac{1}{2p} + j_3 \\
& \leq 1 + j_3 \\
&= 1 + j_1 \frac{1}{4pq} + j_2 \frac{1}{2p} + j_3  \\
&= x_i + x_j.
\end{align*}

\noindent \textbf{Case $6$: $pq \leq i < n$}.
Write 
\begin{align*}
i &= i_1 + i_2q + i_3pq \\
j &= j_1 + j_2q + j_3pq
\end{align*}
in mixed radix notation with respect to $(q,p,r)$. Recall that for $i \geq pq$, we have $v_i = e_2^{i_1}e_1^{i_2}e_3^{i_3}$. Thus, $j < k \leq \min(q-1,i_1+j_1) + \min(p-1,i_2+j_2)q + (i_3 + j_3)pq$.  Then
\begin{align*}
x_k &\leq x_{\min(q-1,i_1+j_1) + \min(p-1,i_2+j_2)p + (i_3 + j_3)pq} \\
& = \min(q-1,i_1+j_1) \frac{1}{4pq} + \min(p-1,i_2+j_2)\frac{1}{2p} + (i_3 + j_3) \\
&\leq (i_1 \frac{1}{4pq} + i_2 \frac{1}{2p} + i_3) + (j_1 \frac{1}{4pq} + j_2 \frac{1}{2p} + j_3) \\
&=x_i + x_j.
\end{align*}

Thus, for any integers $1 \leq i,j < n$, we have $x_{T_{\cF}(i,j)} \leq x_i + x_j$. Moreover, we have that
\[
	x_p = 1 > \frac{1}{2p} + \frac{p-1}{2p} = x_1 + x_{p-1}.
\]
Write 
\begin{align*}
pm &= (pm)_1 + (pm)_2q + (pm)_3pq \\
2pm &= (2pm)_1 + (2pm)_2q + (2pm)_3pq
\end{align*}
in mixed radix notation with respect to $(q,p,r)$ and recall that $pm + pm$ overflows modulo $q$. Therefore, either $(2pm)_3 = 2(pm)_3$ and $(2pm)_2 = 2(pm)_2 + 1$, or $(2pm)_3 = 2(pm)_3 + 1$. If $(2pm)_3 = 2(pm)_3$ and $(2pm)_2 > 2(pm)_2+1$ then
\begin{align*}
x_{2pm} &= (2pm)_1 \frac{1}{4pq} + (2pm)_2\frac{1}{2p} + (2pm)_3 \\
&\geq (2(pm)_2+1)\frac{1}{2p} + 2(pm)_3 \\
&> 2(pm)_1\frac{1}{4pq} + 2(pm)_2\frac{1}{2p} + 2(pm)_3 \\
&= 2x_{pm}.
\end{align*}
Otherwise, if $(2pm)_3 = 2(pm)_3 + 1$, then 
\begin{align*}
x_{2pm} &= (2pm)_1 \frac{1}{4pq} + (2pm)_2\frac{1}{2p} + (2pm)_3 \\
&\geq 2(pm)_3 + 1 \\
&> 2(pm)_1\frac{1}{4pq} + 2(pm)_2\frac{1}{2p} + 2(pm)_3 \\
&= 2x_{pm}.
\end{align*}

Further note that 
\[
	P_{T(q,p,r)} \cup P_{T(q,r,p)} \cup P_{T(r,q,p)} \cup P_{T(r,p,q)} \subseteq \{\mathbf{x} \in \RR^{n-1} \mid x_{p} \leq x_1 + x_{p-1} \}
\]
and
\[
	P_{T(p,q,r)} \cup P_{T(p,r,q)} \subseteq \{\mathbf{x} \in \RR^{n-1} \mid x_{2pm} \leq 2x_{pm}\}.
\]
We have $x_p > x_1 + x_{p-1}$ and $x_{2pm} > 2x_{pm}$, and thus our proof is complete. 
\end{proof}

\begin{proposition}
\label{4p}
Let $n = 4p$ for $p$ a prime not equal to $2$ or $3$. Then there exists a realizable flag type $T$ such that 
\[
	P_{T} \not \subseteq P_{T(2,2,p)} \cup P_{T(2,p,2)} \cup P_{T(p,2,2)}.
\]
\end{proposition}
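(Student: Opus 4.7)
The plan is to adapt the constructive strategy of \cref{p2q} and \cref{pqr}: construct a specific flag $\cF$ of an abelian group $G$ of order $4p$ and exhibit a point $\mathbf{x} \in P_{T_\cF}$ that violates a defining inequality of each of $P_{T(2,2,p)}$, $P_{T(2,p,2)}$, and $P_{T(p,2,2)}$ simultaneously. I would take $G = C_p \times C_2 \times C_2$ with generators $e_1, e_2, e_3$ of orders $p, 2, 2$; set $v_i = e_1^i$ for $0 \leq i < p$ so that $F_{p-1} = \langle e_1 \rangle$; and for $p \leq i < 4p$ define $v_i$ by a minimal-index rule on a mixed-radix expansion $i' = i'_1 + i'_2 p + i'_3 \cdot 2p$ attached to elements $e_2^{a} e_1^{b} e_3^{c}$, in analogy with the twist in \cref{pqr}. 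The flag has tower type $(p,2,2)$, but the non-lexicographic interleaving of cosets of $\langle e_1 \rangle$ forces $T_\cF$ to differ from $T(p,2,2)$ at specific cross-coset corners.

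The point $\mathbf{x}$ would be designed to satisfy two strict inequalities. First, $x_p > x_1 + x_{p-1}$: since $F_1 = \{1, e_1\} \subseteq F_{p-1} = \langle e_1 \rangle$, we have $F_1 F_{p-1} = F_{p-1}$ and hence $T_\cF(1, p-1) = p-1$, so the $P_{T_\cF}$-constraint at this pair is trivial. Yet $(1, p-1)$ is a corner of both $T(2,2,p)$ and $T(2,p,2)$ with value $p$ (the addition has no carry in either mixed-radix expansion since $p-1$ is even), so this inequality certifies $\mathbf{x} \notin P_{T(2,2,p)} \cup P_{T(2,p,2)}$. Second, an inequality $x_k > x_i + x_j$ for a cross-coset corner $(i, j)$ of $T(p, 2, 2)$ with value $k$, engineered so that \cref{overflowlemma} forces $|F_i F_j| \leq k$ and hence $T_\cF(i, j) < k$. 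Since $(p, p)$ is not a corner of $T(p, 2, 2)$ (the sum $2p$ overflows in the second radix-$2$ digit), the pqr-style diagonal pair is unavailable. Instead I would take $(i, j) = (p, 2p)$, a corner of $T(p, 2, 2)$ with value $3p$, and arrange that both $F_p$ and $F_{2p}$ are unions of cosets of the Klein four subgroup $\langle e_2, e_3 \rangle$; when $p \equiv 3 \pmod 4$, \cref{overflowlemma} then yields $|F_p F_{2p}| = 3p - 1$, so $T_\cF(p, 2p) \leq 3p - 2$ and $x_{3p} > x_p + x_{2p}$ is compatible with $P_{T_\cF}$.

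The point $\mathbf{x} = (x_1, \ldots, x_{4p-1})$ would then be defined by a piecewise formula in the mixed-radix coordinates of $i$ with respect to $(p, 2, 2)$ (tiny fractional increments within $\langle e_1 \rangle$, intermediate increments crossing to $\langle e_1, e_2 \rangle$, and unit jumps crossing to the $e_3$-coset), mirroring the explicit choices in the proofs of \cref{p2q} and \cref{pqr}. Membership $\mathbf{x} \in P_{T_\cF}$ reduces to casework over the three stages of the enumeration.

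The main obstacle is the residue-class split modulo $4$: the Klein four stabilizer of $F_p F_{2p}$ appears only when $p \equiv 3 \pmod 4$, and an analogous analysis for stabilizers of order $2$, $p$, or $2p$ shows no overflow at $(p, 2p)$ otherwise. For $p \equiv 1 \pmod 4$ the corner $(p, 2p)$ cannot produce the desired violation, so one must identify an alternate cross-coset corner --- for instance $(p, 2p+1)$, a corner of $T(p, 2, 2)$ with value $3p+1$ where both indices are odd and the sum overflows modulo $2$, producing a stabilizer of order $2$ and $|F_p F_{2p+1}| = 3p + 1$ --- and adapt the stage-3 enumeration so that all $3p + 1$ elements of $F_p F_{2p+1}$ lie in $F_{3p}$, forcing $T_\cF(p, 2p+1) = 3p < 3p + 1$. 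Cleanly managing this case distinction while preserving the first inequality is the most delicate part of the proof.
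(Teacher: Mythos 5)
Your first targeted inequality is fine in spirit (it plays the same role as the paper's $x_{3p}>x_1+x_{3p-1}$), but the second one --- the violation against $P_{T(p,2,2)}$ --- is where your plan breaks, and it breaks for structural reasons tied to your choice of flag. Because you set $v_i=e_1^i$ for $0\le i<p$, you have $\langle e_1\rangle=F_{p-1}\subseteq F_p$, so $F_pF_{2p}\supseteq \langle e_1\rangle F_{2p}$. Since $\lvert F_{2p}\rvert=2p+1>2p$, the set $F_{2p}$ meets at least three cosets of $\langle e_1\rangle$, so $\langle e_1\rangle F_{2p}$ already contains three full cosets ($3p$ elements); moreover, writing $F_p=\langle e_1\rangle\cup\{w\}$, in the quotient $G/\langle e_1\rangle\cong C_2\times C_2$ the product of the class of $w$ with the third coset met by $F_{2p}$ is the fourth coset, so $F_pF_{2p}$ also contains at least one element outside those three cosets. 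Hence $\lvert F_pF_{2p}\rvert\ge 3p+1$ and $T_{\cF}(p,2p)\ge 3p$, so the corner inequality $x_{3p}\le x_p+x_{2p}$ of $T(p,2,2)$ is never violated by your flag; in particular the claimed scenario ``$F_p$ and $F_{2p}$ are unions of Klein-four cosets with $\lvert F_pF_{2p}\rvert=3p-1$'' is impossible outright, since $\lvert F_{2p}\rvert=2p+1$ is odd and cannot be a union of cosets of any nontrivial $2$-group (and $F_p=\langle e_1\rangle\cup\{w\}$ is not such a union either). The same count kills the fallback corner $(p,2p+1)$: one gets $\lvert F_pF_{2p+1}\rvert\ge 3p+2$, hence $T_{\cF}(p,2p+1)\ge 3p+1$, whereas you would need $\le 3p$. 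More generally, once the full cyclic group $\langle e_1\rangle$ occupies an initial segment of the flag, products $F_iF_j$ at the relevant corners contain enough full $\langle e_1\rangle$-cosets that $T(p,2,2)$ cannot be beaten there, so the $p\bmod 4$ case split does not rescue the strategy --- the obstacle is not the residue class but the decision to build $C_p$ first.

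The paper's proof avoids exactly this trap: it interleaves the order-$2$ element at the very start, taking $v_0,\dots,v_7 = 1, e_1, e_2, e_2e_1, e_1^2, e_2e_1^2, e_1^3, e_2e_1^3$, so that $F_3=\langle e_2\rangle\{1,e_1\}$ is a union of $\langle e_2\rangle$-cosets and $F_3F_3=F_5$, giving $T_{\cF}(3,3)=5<6=T(p,2,2)(3,3)$; this small-index corner is what certifies $\mathbf{x}\notin P_{T(p,2,2)}$ (via $x_6>2x_3$, or $x_8>x_5+x_3$ when $p=5$). The other violation comes from the tail, where $F_{3p-1}=\langle e_1\rangle\{1,e_2,e_3\}$ satisfies $F_1F_{3p-1}=F_{3p-1}$, so $x_{3p}>x_1+x_{3p-1}$ rules out $P_{T(2,2,p)}\cup P_{T(2,p,2)}$ --- the analogue of your first inequality. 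No case distinction on $p\bmod 4$ is needed, only $p=5$ versus $p\ge 7$ in the choice of the explicit point. Finally, note that even had your corners worked, the substantive labor of the proposition is exhibiting an explicit $\mathbf{x}$ and checking all constraints $x_{T_{\cF}(i,j)}\le x_i+x_j$ case by case; your proposal defers this entirely, but the fatal gap is the impossibility of the second violation with your flag.
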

\begin{proof}
Let $G = C_p \times C_2 \times C_2$ and let $e_1,e_2,e_3$ be generators of each of the components. Define a sequence $\{ v_0,\dots,v_{4p-1}\}$ as follows. Set 
\begin{align*}
v_0 &= 1 \\
v_1 &= e_1 \\
v_2 &= e_2 \\
v_3 &= e_2 e_1 \\
v_4 &= e_1^2 \\
v_5 &= e_2 e_1^2 \\
v_6 &= e_1^3 \\
v_7 &= e_2e_1^3.
\end{align*}
For $8 \leq i < n$ and $1 \leq i' < n$, write and $i' = i'_1 + i'_2p + i_3'2p$ in mixed radix notation with respect to $(p,2,2)$. Inductively define $v_i$ as follows. Choose $i'$ minimal such that $e_1^{i'_1}e_2^{i'_2}e_3^{i'_3} \notin \{v_0,\dots,v_{i-1}\}$. Set $v_i = e_1^{i'_1}e_2^{i'_2}e_3^{i'_3}$. Observe that for $i \geq 2p$, we have $i = i'$.

Note that 
\begin{align*}
F_1 &= \{ 1, e_1 \} \\
F_3 &= \la e_2\ra \{ 1,e_1 \} \\
F_5 &= \la e_2\ra\{ 1, e_1, e_1^2 \} \\
F_7 &= \la e_2\ra\{ 1, e_1, e_1^2, e_1^3 \} \\
F_{3p-1} &= \la e_1 \ra  \{ 1, e_2, e_3 \}.
\end{align*}
Therefore
\[
	F_1F_{3p-1} =  (\{ 1, e_1 \})( \la e_1\ra\{ 1, e_2, e_3 \}) = \la e_1\ra\{ 1, e_2, e_3 \} = F_{3p-1}
\]
\[
F_3F_3 = (\la e_2 \ra \{ 1,e_1 \})(\la e_2 \ra \{ 1,e_1 \}) = \la e_2 \ra\{ 1, e_1, e_1^2 \} = F_5
\]
\[
F_3F_5 =  (\la e_2 \ra \{ 1,e_1 \})(\la e_2 \ra\{ 1, e_1, e_1^2 \}) = \la e_2 \ra\{ 1, e_1, e_1^2,e_1^3 \}  = F_7.
\]

Suppose $p=5$. Then let $\mathbf{x} \in \RR^{19}$ be as follows:
\begin{align*}
x_1 &= 1 \\
x_2,x &= 1.4 \\
x_4,x_5 &= 2 \\
x_6,x_7 &= 3 \\
x_8,\dots,x_{14} &= 4 \\
x_{15},\dots,x_{19} &= 5.1.
\end{align*}
We claim that $\mathbf{x} \in P_{T_{\cF}}$ and $x_8 > x_5 + x_3$ and $x_{15} > x_1 + x_{14}$. It is clear that $0 \leq x_1 \leq \dots \leq x_{n-1}$.

We now show that for all $1\leq i,j < n$, we have $x_{T_{\cF}(i,j)} \leq x_i + x_j$. Let $k = T_{\cF}(i,j)$.

\noindent \textbf{Case 1: $i = 1$}. If $j = 1$ then $k = 4$ and 
\[
	x_4 = 2 \leq 2x_1.
\]
Observe that if $j = 2,3$ then $k = 4,5$, and
\[
	x_k = 2 \leq 1.4 + 1 \leq x_1 + x_j.
\]
If $j = 4,5$, then $k = 6,7$, and 
\[
	x_k = 3 \leq 2 + 1 = x_1 + x_j.
\]
If $j = 6,7$, then $k \leq 10$, and
\[
	x_k \leq 4 \leq 3 + 1 = x_1 + x_j
\].
If $8 \leq j < 15$, then as $v_1 = e_1$ and $F_{14} = \la e_1 \ra \{ 1, e_2, e_3 \}$, we have that $j < k < 14$. Thus
\[
	x_k \leq 4 \leq 4 + 1 = x_1 + x_j.
\] 
If $15\leq j < n$ then
\[
	x_k \leq 5.1 \leq 5.1 + 1 \leq x_1 + x_j.
\]

\noindent \textbf{Case 2: $i = 2,3$}.
If $j = 2,3$, then $k = 4,5$ so 
\[
	x_k = 2 \leq 1.4 + 1.4 = x_i + x_j.
\]
If $j = 4,5$ then $k = 6,7$ so
\[
	x_k = 3 \leq 2 + 1.4 = x_i + x_j.
\]
If $j = 6,7$ then $k < 15$ so
\[
	x_k \leq 4 \leq 3 + 1.4 \leq x_i + x_j.
\]
If $8 \leq j < n$ then
\[
	x_k \leq 5.1 \leq 4 + 1.4 \leq x_i + x_j.
\]

\noindent \textbf{Case 3: $i = 4,5$}
If $j < 10$ then because $F_9 = \la e_1,e_2 \ra$ we have $k < 10$. Thus
\[
	x_k \leq 4 \leq 2 + 2 \leq x_i + x_j. 
\]
If $10 \leq j < n$ then
\[
	x_k \leq 5.1 \leq 4 + 2 \leq x_i + x_j.
\]

\noindent \textbf{Case 4: $i \geq 6$}.
Then 
\[
	x_k \leq 5.1 \leq 3 + 3 \leq x_i + x_j.
\]

Thus $\mathbf{x} \in P_{T_{\cF}}$. One can see explicitly that $x_8 > x_5 + x_3$ and $x_{15} > x_1 + x_{14}$.
Because 
\[
	P_{T(2,2,5)} \cup P_{T(2,5,2)} \subset \{\mathbf{x} \in \RR^{19} \mid x_{15} \leq x_1 + x_{14} \}
\]
and
\[
	P_{T(5,2,2)} \subset \{\mathbf{x} \in \RR^{19} \mid x_8 \leq x_5 + x_3 \},
\]
our proof is complete for $p = 5$.

If $p \neq 5$, let $\mathbf{x} \in \RR^{4p-1}$ be as follows.
\begin{align*}
x_1 &= 1 \\
x_2, x_3 &= 1.4 \\
x_4, x_5 &= 2 \\
x_6, \dots, x_{3p-1} &= 2.9 \\
x_{3p}, \dots, x_{4p-1} &= 4.
\end{align*}
We claim that $\mathbf{x} \in P_{T_{\cF}}$ and $x_6 > x_3 + x_3$ and $x_{3p} > x_1 + x_{3p-1}$. It is clear that $0\leq x_1 \leq \dots \leq x_{n-1}$. 

We now show that for all integers $1 \leq i,j < n$, we have $x_{T(i,j)} \leq x_i + x_j$. Let $k = T(i,j)$.

\noindent \textbf{Case 1: $i = 1$}.
If $j = 1$, then $k = 4$ and
\[
	x_4 = 2 \leq 2x_1.
\]
Observe that if $j = 2,3$, then $k = 4,5$, and
\[
	x_k \leq x_5 = 2 \leq 1.4 + 1 \leq x_1 + x_j.
\]
If $j = 4,5$, then $k = 6,7$, and
\[
	x_k \leq x_5 = 2.9 \leq 2 + 1 \leq x_1 + x_j.
\]
If $6 \leq j < 3p$, then as $v_1 = e_1$ and $F_{3p-1} = \la e_1 \ra \{ 1, e_2, e_3 \}$, we have $j < k < 3p$. Thus
\[
	x_k = 2.9 \leq 2.9 + 1 = x_j + x_1.
\]
If $j \geq 3p$, then
\[
	x_k = 4 \leq 4 + 1 \leq x_1 + x_j.
\]
\textbf{Case 2: $i = 2,3$}. 
If $j = 2,3$, then $k \leq 5$ and
\[
	x_k = 2 \leq 1.4 + 1.4 = x_i + x_j.
\]
If $j = 4,5$ then $k \leq 7$ and
\[
	x_k \leq 2.9 \leq 2 + 1.4 = x_i + x_j.
\]
If $j \geq 6$, then
\[
	x_k \leq 4 \leq 2.9 + 1.4 = x_i + x_j.
\]
\textbf{Case 3: $i \geq 4$}.
Then we have
\[
	x_k \leq 4 \leq 2 + 2 \leq x_i + x_j.
\]

Therefore, for any integers $1 < i,j < n$,  we have $x_{T_{\cF}(i,j)} \leq x_i + x_j$, so $x \in P_{T_{\cF}}$. Moreover, it is clear that $x_6 > x_3 + x_3$ and $x_{3p} > x_1 + x_{3p-1}$.

Again, we have 
\[
	P_{T(2,2,p)} \cup P_{T(2,p,2)} \subset \{\mathbf{x} \in \RR^{4p-1} \mid x_{3p} \leq x_1 + x_{3p-1} \}
\]
and because $p \geq 7$, we have
\[
	P_{T(p,2,2)} \subset \{\mathbf{x} \in \RR^{4p-1} \mid x_6 \leq x_3 + x_3 \},
\]
and thus our proof is complete.
\end{proof}

\section{Tower types}
Our aim in this section will be to prove the following theorem.

\lenstrathm*
\begin{proof}

If $n = p^k$ is a prime power, \cref{minimalitycompletethm} proves that $T(p,\dots,p)$ is the unique flag type that is minimal among flags of tower type $\mfT$.

If $n = 4$ and $\mfT = (4)$ apply \cref{LC4}. If $n = 6$, apply \cref{LC6}. Every composite $n < 8$ is either prime, $4$, or $6$ the theorem is proven for $n < 8$. If $n = 8$, apply \cref{casesfor8corol}.
If $n = 2p$ for an odd prime $p$ and  $\mfT = (2,p)$ then apply \cref{LC2p}. Similarly, if $n = 3p$ for a prime $p$ and $\mfT = (3,p)$ then apply \cref{LC3p}.
\end{proof}

We will need the following lemma.

\begin{lemma}
\label{mleqilemma}
Let $\cF$ be a flag of tower type $\mfT = (n_1,\dots,n_t)$. Suppose we have $i,j,i+j \in [n]$ such that $i + j$ does not overflow modulo $\mfT$ and $T_{\cF}(i,j) < i + j$. Then there exists an integer $1 < m < n$ such that $m \mid n$ and $i + j$ overflows modulo $m$ and $m < i$.
\end{lemma}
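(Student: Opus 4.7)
The plan is to apply \cref{overflowlemma} to the sets $F_i$ and $F_j$ and show that the resulting stabilizer order itself serves as the desired $m$. The tower structure of $\cF$ interacts with the no-overflow hypothesis to force the quantitative bound $m < i$.

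Apply \cref{overflowlemma} to $I = F_i$ and $J = F_j$. Since $|F_iF_j| \leq i+j$, it produces $m_0 := |\Stab(F_iF_j)| > 1$ with $m_0 \mid n$, and $i+j$ overflows modulo $m_0$; writing $H := \Stab(F_iF_j)$ and $i = i_1' + i_2'm_0$, $j = j_1' + j_2'm_0$ in mixed radix with respect to $(m_0, n/m_0)$, we have $|HF_i| = (i_2'+1)m_0$ and $|HF_j| = (j_2'+1)m_0$. Note $m_0 < n$, else $F_iF_j = G$, contradicting $|F_iF_j| < n$. Set $m := m_0$; it remains to show $m_0 < i$.

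First, $m_0 \neq i$: if $m_0 = i$, then $i \bmod m_0 = 0$, so overflow modulo $m_0$ forces $j \bmod m_0 \geq m_0$, which is absurd. Next, suppose for contradiction that $m_0 > i$. Then $i_2' = 0$, so $|HF_i| = m_0$; since $1 \in F_i$, we have $H \subseteq HF_i$, and both sets have size $m_0$, so $HF_i = H$ and $F_i \subseteq H$. The tower hypothesis yields $\langle F_i \rangle = G_s$ for the unique $s$ with $n_1\cdots n_{s-1} \leq i < n_1 \cdots n_s$, and therefore $G_s \subseteq H$. Assume WLOG $i \leq j$ (the hypothesis is symmetric in $i,j$).

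The main case is $F_j \subseteq H$: then $m_0 > j$, $F_iF_j \subseteq H$, and the overflow lemma gives $|F_iF_j| = (i_2'+j_2'+1)m_0 = m_0 = |H|$, so $F_iF_j = H$. Hence $H = \langle F_iF_j \rangle = \langle F_i \cup F_j \rangle = G_{s'}$ (where $G_{s'} := \langle F_j \rangle$, with $s' \geq s$), so $m_0 = n_1\cdots n_{s'}$. Since $i, j < n_1\cdots n_{s'}$, the $\mfT$-digits of $i$ and $j$ above position $s'$ vanish. Combining the no-overflow hypothesis $i_l + j_l < n_l$ for every $l$ with the telescoping identity $\sum_{l=1}^{s'}(n_l - 1)n_1\cdots n_{l-1} = n_1\cdots n_{s'} - 1$ yields
\[
i + j = \sum_{l=1}^{s'}(i_l + j_l)n_1\cdots n_{l-1} \leq n_1\cdots n_{s'} - 1 = m_0 - 1,
\]
contradicting $i + j \geq m_0$ (which follows from overflow mod $m_0$, since $i, j < m_0$ forces $i + j = (i \bmod m_0) + (j \bmod m_0) \geq m_0$).

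The remaining sub-case $F_j \not\subseteq H$ (so $m_0 \leq j$) is handled by the same flavor of digit-counting. One first observes that $H \subseteq G_{s'}$, because $\langle F_iF_j \rangle = \langle F_i \cup F_j \rangle = G_{s'}$ contains $H$ (the latter is generated by the union of cosets $F_iF_j$). Writing $m_0 = q \cdot n_1\cdots n_s$ with $q \mid n_{s+1}\cdots n_{s'}$, a direct computation of $j_1' = j \bmod m_0$ in $\mfT$-digits, combined with $i_l + j_l < n_l$ for $l \leq s$ (the only nontrivial no-overflow constraints, since $i_l = 0$ for $l > s$) and the bound $i < m_0$, yields $i + j_1' \leq m_0 - 1$, again contradicting overflow. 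The edge case $q = 1$ (i.e.\ $H = G_s$) reduces immediately to the main-case argument with $s'$ replaced by $s$.

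The main obstacle is the second sub-case in Step~2: $H$ need not itself be a tower subgroup (when $G_{s'}/G_s$ has several subgroups of order $q$), so one cannot identify $H$ with some $G_{s''}$. The plan is to circumvent this by arguing intrinsically about the mixed-radix digits of $i$ and $j$ rather than about the subgroup $H$ itself, using that $F_i$ already forces $i_l = 0$ for $l > s$.
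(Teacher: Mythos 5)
Your argument is correct and follows essentially the same route as the paper: apply \cref{overflowlemma} to get $m=\lvert\Stab(F_iF_j)\rvert$, rule out $m=i$, and, assuming $m>i$, deduce $F_i\subseteq\Stab(F_iF_j)$ so that the tower subgroup $\langle F_i\rangle=G_s$ gives $n_1\cdots n_s\mid m$, after which a mixed-radix digit count contradicts the overflow modulo $m$ supplied by \cref{overflowlemma}. The differences are cosmetic: your case split on whether $F_j\subseteq H$ is unnecessary (the digit computation of your second sub-case, with $q=m/(n_1\cdots n_s)\geq 1$, already handles both cases, just as the paper's single computation does), and your side claim that $s$ is characterized by $n_1\cdots n_{s-1}\leq i<n_1\cdots n_s$ is unjustified in general but never used, since only $i<n_1\cdots n_s$ matters.
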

\begin{proof}
We prove the lemma when $\cF$ is a flag of an abelian group. The proof in the case of field extensions is identical. If $\cF$ is a flag of a group, let $m = \lv \Stab(F_iF_j)\rv$. Clearly $m \mid n$. By \cref{overflowlemma}, we have $1 < m < n$. 

Write $i$, $j$, and $i + j$ in mixed radix notation with respect to $(m, n/m)$ as
\begin{align*}
i &= i_1 + i_2 m \\
j &= j_1 + j_2 m  \\
i+j &= (i+j)_1 + (i+j)_2 m.
\end{align*}
By \cref{overflowlemma},
\begin{align*}
\lv \Stab(F_iF_j)F_i \rv &= (i_2 + 1)m \\
\lv \Stab(F_iF_j)F_j \rv &= (j_2 + 1)m \\
\lv F_iF_j \rv &= (i_2 + j_2 + 1)m. 
\end{align*}
Because $i + j$ overflows modulo $m$, we have $i \neq m$. Assume for the sake of contradiction that $i < m$; because $\lv \Stab(F_iF_j)F_i \rv = m$, we have that $F_i$ is contained in the group $\Stab(F_iF_j)$. Therefore the group generated by $F_i$ is contained in the group $\Stab(F_iF_j)$. The group generated by $F_i$ has cardinality $n_1\dots n_k$ for some $1 \leq k \leq t$; thus $n_1 \dots n_k \mid m$.

Because $i + j$ overflows modulo $m$ and does not overflow modulo $\mfT = (n_1,\dots,n_t)$, we have $n_1\dots n_k \neq m$. Write $i$, $j$, and $i+j$ in mixed radix notation with respect to $(n_1\dots n_k, m/(n_1\dots n_k), n/m)$ as
\begin{align*}
i &= i'_1 + i'_2 (n_1\dots n_k) + i'_3 m \\
j &= j'_1 + j'_2 (n_1\dots n_k) + j'_3 m \\
i+j &= (i+j)'_1 + (i+j)'_2 (n_1\dots n_k) + (i+j)'_3 m.
\end{align*}

\noindent Because the addition $i + j$ does not overflow modulo $\mfT$, we must have $i'_1 + j'_1 = (i+j)'_1$. Because $\la F_i \ra = n_1\dots n_k$, we have $i < n_1\dots n_k$, and thus $i_2' = 0$ so $i'_2 +  j'_2 = (i+j)'_2$.  Because $i'_1 + i'_2 (n_1\dots n_k) = i_1$, $j'_1 + j'_2 (n_1\dots n_k) = j_1$, and $(i+j)'_1 + (i+j)'_2 (n_1\dots n_k) = (i+j)_1$, we have that $i_1 + j_1 = (i+j)_1$; this is a contradiction, as the addition $i + j$ overflows modulo $m$.
\end{proof}

The rest of this section is dedicated to proving \cref{LC4}, \cref{LC6}, \cref{casesfor8corol}, \cref{LC2p}, and \cref{LC3p}.

\begin{corollary}
\label{LCfori12}
Let $i = 1$ or $i = 2$ and choose $j$ such that $i \leq j < i + j < n$. Let $\cF$ be a flag with tower type $\mfT$. If $T_{\cF}(i,j) < i+j$
then $i + j$ overflows modulo $\mfT$.
\end{corollary}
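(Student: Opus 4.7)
The plan is to prove this by contrapositive, reducing directly to \cref{mleqilemma}. Assume $i+j$ does not overflow modulo $\mfT$, and that $T_{\cF}(i,j) < i+j$. Under these hypotheses, \cref{mleqilemma} furnishes an integer $m$ with $1 < m < n$, $m \mid n$, the addition $i+j$ overflowing modulo $m$, and crucially $m < i$. Since $i \in \{1,2\}$, this forces $m < 2$, contradicting $m > 1$. Hence the assumption that $T_{\cF}(i,j) < i+j$ together with non-overflow is untenable, so whenever $T_{\cF}(i,j) < i+j$ the addition $i+j$ must overflow modulo $\mfT$.

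There is no real obstacle here: the entire content has been packaged into \cref{mleqilemma}, and the constraint $m < i$ is the point of that lemma. The only thing worth emphasizing in the write-up is that the bound $m < i$, which looked like a technical byproduct in the general setting, is exactly what pins $m$ down to an impossible value in the small-$i$ regime. This is also why the corollary is phrased for $i \in \{1,2\}$ rather than larger $i$: for $i = 3$ one would only rule out $m = 2$, leaving the possibility $m$ equals an odd divisor, and so on.

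Thus the proof is essentially one line invoking \cref{mleqilemma} in contrapositive form, followed by the numerical observation that $1 < m < i \leq 2$ is vacuous.
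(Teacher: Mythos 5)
Your proof is correct and is essentially identical to the paper's: both argue that if $i+j$ did not overflow modulo $\mfT$ while $T_{\cF}(i,j) < i+j$, then \cref{mleqilemma} would yield an integer $m$ with $1 < m < i \leq 2$, which is impossible. No differences worth noting.
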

\begin{proof}
If $i + j$ does not overflow modulo $\mfT$ and $T_{\cF}(i,j) < i+j$ apply \cref{mleqilemma} and observe that there must exist an integer $1 < m < i \leq 2$, which is a contradiction. 
\end{proof}

\begin{corollary}
\label{nminus1corollary}
If $i + j = n-1$ then for any flag $\cF$ we have $T_{\cF}(i,j) = n-1$.
\end{corollary}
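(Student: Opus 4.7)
The plan is to prove $T_{\cF}(i,j) \geq n-1$ by contradiction, since the reverse inequality $T_{\cF}(i,j) \leq n-1$ is automatic from the definition of a flag type. (If $i = 0$ or $j = 0$ the result is immediate from condition (2) of \cref{flagtypedefn}, so I assume $i, j \geq 1$.) Assuming $T_{\cF}(i,j) < n-1 = i+j$, I would apply \cref{mleqilemma} to the tower type $\mfT$ of $\cF$, which requires first checking that the addition $i+j$ does not overflow modulo $\mfT$.

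The key auxiliary observation I would establish is the following: for every ordered factorization $n = m_1 m_2 \cdots m_s$ with each $m_r > 1$ and every pair $0 \leq i, j < n$ with $i + j = n - 1$, the addition $i + j$ does not overflow modulo $(m_1,\dots,m_s)$. This rests on the fact that $n - 1$ has maximal mixed radix digits,
\[
	n - 1 = (m_1 - 1) + (m_2 - 1)m_1 + \cdots + (m_s - 1) m_1 \cdots m_{s-1},
\]
combined with a direct induction on the position index. Inductively, assume no carry has occurred through position $r - 1$; then the position $r$ relation reduces to $i_r + j_r = (m_r - 1) + c_r m_r$ for some carry $c_r \in \{0,1\}$ out of position $r$. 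Since $i_r, j_r \leq m_r - 1$ we have $i_r + j_r \leq 2m_r - 2 < 2m_r - 1$, so $c_r = 0$ and $i_r + j_r = m_r - 1$. Hence $i + j$ does not overflow at any position.

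Applying this auxiliary claim to $\mfT$ verifies the hypothesis of \cref{mleqilemma}, which then produces a divisor $1 < m < n$ of $n$ such that $i + j$ overflows modulo $m$, i.e., modulo the two-term factorization $(m, n/m)$. But the same auxiliary claim applied to $(m, n/m)$ shows no such overflow exists. This contradiction forces $T_{\cF}(i,j) = n - 1$.

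The only substantive step is the mixed radix observation that $n-1$ has all maximal digits and hence forces $i + j = n - 1$ to be carry-free; this is a clean induction with no real obstacle. The rest is a direct invocation of \cref{mleqilemma}.
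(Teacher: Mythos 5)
Your proof is correct, and the central observation is the same one the paper relies on: $n-1$ has maximal digits in mixed radix notation with respect to any factorization of $n$ into parts greater than $1$, so an addition summing to $n-1$ is necessarily carry-free (does not overflow). Your carry induction establishing this is sound. The difference is the lemma you route through. The paper's proof is a one-line application of \cref{overflowlemma}: if $T_{\cF}(i,j) < i+j$, that lemma directly produces a divisor $1 < m$ of $n$ modulo which $i+j$ overflows, contradicting the maximal-digit observation applied to $(m,n/m)$; no information about the flag beyond $\lvert F_iF_j\rvert \leq i+j$ is needed. You instead invoke \cref{mleqilemma}, which is itself a consequence of \cref{overflowlemma} and carries the extra hypothesis that $i+j$ does not overflow modulo the tower type $\mfT$ of $\cF$ — so you must additionally introduce the tower type of the flag and apply your auxiliary claim twice, once to $\mfT$ to enter the lemma and once to $(m,n/m)$ to reach the contradiction. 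This works (the tower type of any flag has all entries greater than $1$, so your auxiliary claim applies), but it is heavier machinery than necessary; the direct appeal to \cref{overflowlemma} gives the same contradiction without mentioning $\mfT$ at all. Your handling of the degenerate case $i=0$ or $j=0$ and of the automatic upper bound $T_{\cF}(i,j) \leq n-1$ is fine.
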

\begin{proof}
Observe that $i+j$ does not overflow modulo $m$ for any $m \mid n$, and apply \cref{overflowlemma}.
\end{proof}

\begin{corollary}
\label{LC4}
We have that $T(4)$ is the unique flag type minimal among flags of tower type $\mfT = (4)$.
\end{corollary}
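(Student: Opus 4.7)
The flag type $T(4)$ satisfies $T(4)(i,j) = \min(i+j, 3)$ for $i,j \in [4]$ by \cref{explicitflagtype}; its corners are all pairs $(i,j)$ with $i + j < 4$. By \cref{realizabilityforgroupsthm}, $T(4)$ is itself realized by the lexicographic flag on $C_4$, which has tower type $(4)$. Hence it suffices to prove that every flag $\cF$ of tower type $(4)$ satisfies $T_{\cF} \geq T(4)$; this would make $T(4)$ the minimum, hence the unique minimal, realizable flag type of tower type $(4)$.

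The plan is then to reduce to two nontrivial entries. The entries $T_{\cF}(i,0)$ and $T_{\cF}(0,j)$ match $T(4)$ by condition (2) of \cref{flagtypedefn}. The entry $T_{\cF}(1,2)$ equals $n-1 = 3$ by \cref{nminus1corollary}, and the remaining entries with an index equal to $3$ are pinned to $3$ by monotonicity (condition (3) of \cref{flagtypedefn}) combined with the universal bound $T_{\cF}(i,j) \leq n-1$. Thus the only entries where I need to verify $T_{\cF} \geq T(4)$ nontrivially are $(1,1)$, where $T(4)(1,1) = 2$, and $(2,2)$, where $T(4)(2,2) = 3$.

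At $(1,1)$: if $T_{\cF}(1,1) < 2$, then $F_1 F_1 \subseteq F_1$, so $F_1$ is a subgroup (or subfield) of order (respectively $K$-dimension) $2$. Then $F_1$ is itself the subgroup (or subfield) generated by $F_1$, so it appears as a proper nontrivial term in the tower of $\cF$, contradicting tower type $(4)$. Hence $T_{\cF}(1,1) \geq 2$. At $(2,2)$: if $T_{\cF}(2,2) < 3$, then $F_2 F_2 \subseteq F_2$ forces $F_2$ to be a subgroup (or subfield) of order (respectively $K$-dimension) $3$; this is impossible since $3 \nmid 4$, by Lagrange's theorem (respectively the tower law for fields). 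Hence $T_{\cF}(2,2) = 3$. Combining these, $T_{\cF} \geq T(4)$, which completes the proof.

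There is no serious obstacle here: the argument is driven entirely by the smallness of the two relevant entries and the arithmetic fact that $3 \nmid 4$. The only step requiring any care is the identification of ``$T_{\cF}(i,i) = i$'' with ``$F_i$ is closed under multiplication and hence a subgroup or subfield,'' which is immediate from the definition of $T_{\cF}$ and the finiteness of $F_i$ (for groups) or its containment of $1$ together with closure (for fields).
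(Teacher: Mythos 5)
Your proof is correct, but it takes a genuinely different route from the paper. The paper invokes \cref{testflagineqlemma} to reduce to a corner $(i,j)$ of $T(4)$ with $i\leq j$ (forcing $i=1$), and then applies \cref{LCfori12} (which rests on \cref{mleqilemma} and hence on Kneser's theorem via \cref{overflowlemma}) to conclude that $i+j$ would have to overflow modulo $(4)$, a contradiction. You instead verify $T_{\cF}\geq T(4)$ entry by entry: the entries with an index $0$ or $3$ and the entry $(1,2)$ are pinned down by the flag-type axioms and \cref{nminus1corollary}, the entry $(2,2)$ is handled by the Lagrange/tower-law obstruction $3\nmid 4$, and the crucial entry $(1,1)$ is handled by observing that $T_{\cF}(1,1)=1$ would make $F_1$ a subgroup (resp.\ subfield) of order (resp.\ degree) $2$, which would then appear in the associated tower and contradict tower type $(4)$ --- this last step is in effect a hands-on special case of what \cref{mleqilemma} does in general, using the definition of tower type directly rather than the stabilizer bookkeeping. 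Your handling of the minimum-implies-unique-minimal logic and of the realizability of $T(4)$ by the lexicographic flag on $C_4$ (so that $T(4)$ is indeed among the candidates) is also sound, and in fact more explicit than the paper on that point. What the paper's route buys is uniformity: the same corner-plus-overflow template proves \cref{LC6}, \cref{casesfor8corol}, \cref{LC2p}, and \cref{LC3p}; what your route buys is elementarity for this small case, since only the $(1,2)$ entry needs Kneser at all (via \cref{nminus1corollary}), while the remaining entries are settled by closure and divisibility arguments. The only cosmetic caveat is that \cref{explicitflagtype} literally gives $T(4)(i,j)=i+j$ only when $i+j<4$; the values $T(4)(i,j)=3$ for $i+j\geq 4$ follow from the trivial bound $T\leq n-1$ (or direct computation), which is all your argument actually uses.
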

\begin{proof}
Suppose for the sake of contradiction that $\cF$ be a flag type of tower type $\mfT = (4)$ such that $T_{\cF} \not \geq T(4)$.

By \cref{testflagineqlemma}, there exists a corner $(i,j)$ of $T(4)$ such that $T_{\cF}(i,j) < i+j$. Without loss of generality, we may assume $i \leq j$; this implies that $i = 1$. By \cref{LCfori12}, this implies $i+j$ overflows modulo $\mfT = (4)$, which is a contradiction.
\end{proof}

\begin{corollary}
\label{LC6}
We have that $T(6)$ is the unique flag type minimal among flags of tower type $\mfT = (6)$.
\end{corollary}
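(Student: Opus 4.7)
The strategy is to closely mirror the proof of \cref{LC4}: establish that for every flag $\cF$ of tower type $(6)$, one has $T_\cF \geq T(6)$. Combined with the realizability of $T(6)$ by the standard flag on $C_6$ produced by \cref{realizabilityforgroupsthm} (and its field analog \cref{realizabilityforfieldsthm}), this gives both the existence of a minimum and its uniqueness.

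The first step is to assume for contradiction that some flag $\cF$ of tower type $(6)$ satisfies $T_\cF \not\geq T(6)$, and invoke \cref{testflagineqlemma} to produce a corner $(i,j)$ of $T(6)$ with $T_\cF(i,j) < T(6)(i,j)$. By \cref{explicitflagtype} applied to the tower type $\mfT = (6)$, any corner $(i,j)$ of $T(6)$ satisfies $T(6)(i,j) = i+j$ and the sum $i+j$ does not overflow modulo $(6)$, which for a single-term tower simply means $i + j < 6$. After swapping coordinates if necessary so that $i \leq j$, the bound $i + j \leq 5$ forces $i \leq 2$.

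The key step is then to apply \cref{LCfori12} to the pair $(i,j)$ with $i \in \{1,2\}$: since $T_\cF(i,j) < i+j$, the sum $i+j$ must overflow modulo $\mfT = (6)$. But overflow modulo $(6)$ would require $i + j \geq 6$, contradicting $i + j \leq 5$. This contradiction establishes $T_\cF \geq T(6)$, completing the proof.

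There is no serious obstacle here; the proof is essentially the same as that of \cref{LC4}, with the only structural difference being that the constraint $0 < i \leq j$ with $i+j \leq 5$ allows $i = 2$ in addition to $i = 1$. Since \cref{LCfori12} covers both cases uniformly, no additional argument is required.
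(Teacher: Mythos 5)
Your proof is correct and follows essentially the same route as the paper: reduce via \cref{testflagineqlemma} to a corner $(i,j)$ of $T(6)$ with $T_{\cF}(i,j) < i+j$, note $i \leq 2$, and contradict via \cref{LCfori12} since no addition with $i+j \leq 5$ overflows modulo $(6)$. The only (immaterial) difference is that the paper splits off the case $i+j = 5$ and handles it with \cref{nminus1corollary}, whereas you correctly observe that \cref{LCfori12} already covers it uniformly.
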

\begin{proof}
Suppose for the sake of contradiction that $\cF$ be a flag type of tower type $\mfT = (6)$ such that $T_{\cF} \not \geq T(6)$.

By \cref{testflagineqlemma}, there exists $(i,j)$ such that $T_{\cF}(i,j) < i+j$. Without loss of generality, we may assume $i \leq j$. If $i + j = 5$, then \cref{nminus1corollary} implies that $T_{\cF}(i,j) = i+j$, which is a contradiction. Therefore $i = 1$ or $i = 2$. By \cref{LCfori12}, this implies $i+j$ overflows modulo $\mfT = (6)$, which is a contradiction. 
\end{proof}

\begin{corollary}
\label{casesfor8corol}
We have that $T(2,4)$ is the unique flag type minimal among flags of tower type $\mfT = (2,4)$. We have that $T(4,2)$ is the unique flag type minimal among flags of tower type $\mfT = (4,2)$.
\end{corollary}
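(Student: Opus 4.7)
The plan is to adapt the argument used for \cref{LC4} and \cref{LC6}. Suppose for contradiction that $\cF$ is a flag of tower type $\mfT \in \{(2,4),(4,2)\}$ with $T_\cF \not\geq T(\mfT)$. By \cref{testflagineqlemma} combined with \cref{explicitflagtype}, there is a corner $(i,j)$ of $T(\mfT)$ with $T_\cF(i,j) < T(\mfT)(i,j) = i+j$, and by symmetry we may take $i \leq j$. The case $i + j = n - 1 = 7$ is immediately ruled out by \cref{nminus1corollary}, leaving $i + j \leq 6$.

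The key step is then a finite enumeration of the corners of $T(\mfT)$ satisfying $i \leq j$ and $1 \leq i+j \leq 6$. Writing indices in mixed radix notation with respect to $\mfT$, the corner condition amounts to componentwise non-overflow of $i+j$. A direct check, going through the handful of candidate pairs, shows that for $\mfT = (2,4)$ the only such corners are $(1,2), (1,4), (2,2), (2,3), (2,4)$, and for $\mfT = (4,2)$ the only such corners are $(1,1), (1,2), (1,4), (1,5), (2,4)$. Every pair in both lists satisfies $i \leq 2$.

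With the enumeration in hand, \cref{LCfori12} applies: since $i \leq 2$ and $T_\cF(i,j) < i+j$, the addition $i+j$ must overflow modulo $\mfT$, contradicting the fact that $(i,j)$ is a corner of $T(\mfT)$. This shows $T_\cF \geq T(\mfT)$ for every flag $\cF$ of tower type $\mfT$. Combined with \cref{realizabilityforgroupsthm} applied, for instance, to $C_8$, which has filtrations realizing both tower types $(2,4)$ and $(4,2)$, one concludes that $T(\mfT)$ is itself realized and is therefore the unique minimal flag type among flags of tower type $\mfT$.

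I do not expect any substantive obstacle: the argument is a direct analog of the ones already written for $n = 4$ and $n = 6$, and the other ingredients (\cref{testflagineqlemma}, \cref{explicitflagtype}, \cref{nminus1corollary}, \cref{LCfori12}, and \cref{realizabilityforgroupsthm}) are all in place. The only point that requires care is verifying completeness of the short corner enumeration and that every corner appearing on the lists genuinely has $i \leq 2$, so that \cref{LCfori12} may be invoked.
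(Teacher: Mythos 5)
Your proof is correct and follows essentially the same route as the paper: reduce via \cref{testflagineqlemma} to a corner of $T(\mfT)$ with a strict inequality, dispose of $i+j=7$ by \cref{nminus1corollary}, and then invoke \cref{LCfori12} since all remaining corners have $i\leq 2$. Your explicit corner enumeration just spells out the step the paper states as ``Therefore $i=1$ or $i=2$,'' and your realizability remark matches what the paper leaves implicit via the construction of $\cF(\mfT)$.
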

\begin{proof}
Suppose for the sake of contradiction that $\cF$ be a flag type of tower type $\mfT = (2,4)$ such that $T_{\cF} \not \geq T(2,4)$.

By \cref{testflagineqlemma}, there exists $(i,j)$ of $T(2,4)$ such that $T_{\cF}(i,j) < i+j$ and $i+j$ does not overflow modulo $(2,4)$. If $i + j = 7$, then \cref{nminus1corollary} implies that $T_{\cF}(i,j) = i+j$, which is a contradiction. Therefore $i = 1$ or $i = 2$. By \cref{LCfori12}, this implies $i+j$ overflows modulo $\mfT = (2,4)$, which is a contradiction. 

The proof for $\mfT = (4,2)$ is identical. 
\end{proof}

\begin{corollary}
\label{LC2p}
For any prime $p$, we have that $T(2,p)$ is the unique flag type minimal among flags of tower type $\mfT = (2,p)$.
\end{corollary}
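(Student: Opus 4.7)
The plan is to follow the template of \cref{LC4} and \cref{LC6} but to sharpen the corner-analysis step using \cref{mleqilemma}, which in this setting will completely handle the new cases (those with $i \geq 3$). Suppose for contradiction that $\cF$ is a flag of tower type $(2,p)$ with $T_{\cF} \not\geq T(2,p)$. By \cref{testflagineqlemma}, there exists a corner $(i,j)$ of $T(2,p)$ with $T_{\cF}(i,j) < T(2,p)(i,j)$; applying \cref{explicitflagtype}, this corner satisfies $T(2,p)(i,j) = i+j$ and the addition $i+j$ does not overflow modulo $\mfT = (2,p)$. Without loss of generality, assume $i \leq j$.

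The key step is to invoke \cref{mleqilemma}, which produces an integer $m$ with $1 < m < n = 2p$, $m \mid 2p$, $m < i$, and such that $i+j$ overflows modulo $m$. Since $n = 2p$, the only nontrivial divisors are $m = 2$ and $m = p$, so I only need to dispose of these two cases.

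If $m = 2$, then $n/m = p$, and the definition of overflow modulo $m$ specializes to overflow modulo the pair $(2,p)$, which is precisely $\mfT$; this contradicts the defining property of the corner $(i,j)$. If $m = p$, then $m < i$ gives $i > p$, and since $j \geq i > p$ we get $i+j > 2p = n$, contradicting $i+j \in [n]$.

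I expect no real obstacle: this is essentially a direct specialization of \cref{mleqilemma} to the case where $n$ has only two nontrivial divisors. The only mild subtlety is recognizing that for $n = 2p$ and $m = 2$, ``overflow modulo $m$'' is by definition identical to ``overflow modulo $\mfT$'', so that case eliminates itself from the hypothesis rather than requiring combinatorial work. The $m = p$ case is handled by the trivial cardinality bound $i + j > 2p$.
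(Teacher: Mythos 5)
Your proof is correct and follows essentially the same route as the paper: invoke \cref{testflagineqlemma} to get a corner of $T(2,p)$ where $T_{\cF}$ drops below $i+j$, then use \cref{mleqilemma} (plus the fact that overflow modulo $2$ is the same as overflow modulo $(2,p)$) to force the stabilizer order to be $p$ and conclude $i+j>2p$, a contradiction. The only difference is cosmetic: the paper first pins down the overflow modulo $p$ via \cref{overflowlemma} before applying \cref{mleqilemma}, whereas you case-split on $m\in\{2,p\}$ afterwards.
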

\begin{proof}
Suppose for the sake of contradiction that $\cF$ be a flag type of tower type $\mfT = (2,p)$ such that $T_{\cF} \not \geq T(2,p)$.

By \cref{testflagineqlemma}, there exists $(i,j)$ of $T(2,p)$ such that $T_{\cF}(i,j) < i+j$ and $i+j$ does not overflow modulo $(2,p)$. By \cref{overflowlemma}, we have that $i + j$ overflows modulo $p$. Without loss of generality we may suppose $i \leq j$. By \cref{mleqilemma}, we have $p < i$; thus $i + j > 2p$, which is a contradiction.
\end{proof}

\begin{corollary}
\label{LC3p}
For any prime $p$, we have that $T(3,p)$ is the unique flag type minimal among flags of tower type $\mfT = (3,p)$.
\end{corollary}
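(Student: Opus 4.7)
The plan is to follow the template of \cref{LC2p} with one extra digit of bookkeeping. Assume for contradiction that there is a flag $\cF$ of tower type $(3,p)$ with $T_\cF \not\geq T(3,p)$. Applying \cref{testflagineqlemma} together with \cref{explicitflagtype}, and using the symmetry $T(i,j) = T(j,i)$, I obtain integers $0 < i \leq j$ with $i+j < 3p$ such that $i+j$ does not overflow modulo $(3,p) = \mfT$ and $T_\cF(i,j) < i+j$.

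The next step is to apply \cref{mleqilemma} to produce a divisor $m$ of $n = 3p$ with $1 < m < n$, with $i + j$ overflowing modulo $m$, and with $m < i$. The only such candidates are $m = 3$ and (when $p \neq 3$) $m = p$. For $m = 3$, the tower type $(m, n/m) = (3, p)$ coincides with $\mfT$, so ``overflow modulo $3$'' is the same as ``overflow modulo $\mfT$'', contradicting our hypothesis. Thus $m \neq 3$; when $p = 3$ no admissible $m$ exists and we are done, and when $p \neq 3$ we must have $m = p$, so $p < i$.

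Finally, combining $p < i \leq j$ with $i+j < 3p$ forces $j < 2p$, so that $i, j \in (p, 2p)$. Writing $i = a_1 + a_2 p$ and $j = b_1 + b_2 p$ in mixed radix with respect to $(p, 3)$ gives $a_2 = b_2 = 1$, so the mod-$3$ digit does not carry. Overflow modulo $(p, 3)$ must therefore come from the first digit, yielding $a_1 + b_1 \geq p$; but then $i + j = (a_1 + b_1) + 2p \geq 3p$, contradicting $i + j < 3p$. The main obstacle is the case split on $m$: one must recognize that $m = 3$ is automatically ruled out by the definition of overflow modulo $\mfT$, after which $m = p$ confines $i$ and $j$ to the narrow window $(p, 2p)$, and the numerical contradiction follows from pinning down the mixed-radix digits.
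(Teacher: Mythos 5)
Your proof is correct and takes essentially the same route as the paper: both use \cref{testflagineqlemma} to produce $(i,j)$ with $T_{\cF}(i,j) < i+j$ and $i+j$ not overflowing modulo $(3,p)$, then rule out $m=3$ (since overflow modulo $3$ is overflow modulo $\mfT$) so that \cref{overflowlemma} and \cref{mleqilemma} force $m=p$ and $p<i$, and finally deduce $i+j\geq 3p$, contradicting $i+j<3p$. Your mixed-radix digit computation at the end merely spells out the inequality the paper states directly.
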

\begin{proof}
Suppose for the sake of contradiction that $\cF$ be a flag type of tower type $\mfT = (3,p)$ such that $T_{\cF} \not \geq T(3,p)$.

By \cref{testflagineqlemma}, there exists $(i,j)$ of $T(3,p)$ such that $T_{\cF}(i,j) < i+j$ and $i+j$ does not overflow modulo $(3,p)$. By \cref{overflowlemma}, we have that $i + j$ overflows modulo $p$. Without loss of generality we may suppose $i \leq j$. By \cref{mleqilemma}, we have $p < i$. Because $p < i \leq j$ and $i + j$ overflows modulo $p$, we have $i + j \geq 3p$, which is a contradiction.
\end{proof}

\section{The structure of corners}
\label{cornerstructuresec}

\cornersineq*
\begin{proof}
We prove the theorem for flag types realizable for groups; the proof for flag types realizable by field extensions is identical.

Let $(i,j)$ be a corner of $T$ and suppose for the sake of contradiction that $T(i,j) < i + j$.
\[
	F_iF_{j-1} = F_iF_j
\]
which would imply that $(i,j)$ is not a corner.

To this end, set $M = \Stab(F_iF_j)$ and $m = \lv\Stab(M) \rv$. Note that $m > 1$. Write $i$ and $j$ in mixed radix notation with respect to $(m,n/m)$ as $i = \alpha_1 + \alpha_2m$ and $j = \beta_1 + \beta_2m$ and note that $\alpha_1 + \beta_1 > m$ (because $k \neq i + j$). Thus, $\beta_1 \neq 1$. 
 
Note that $m^{-1} \lv MF_{j-1}\rv \geq \lceil j/m \rceil = \beta_2 + 1$ and $m^{-1} \lv MF_{j} \rv = \beta_2 + 1$, so
\[
	MF_{j-1} = MF_j.
\]
Note that $\Stab(MF_{i}MF_{j-1}) = M$. Define
\[
	M' = \Stab(F_{i}F_{j-1})
\]
and set $m' = \lv M' \rv$. Note that because 
\[
	\Stab(F_{i}F_{j-1}) \subseteq \Stab(MF_{i}MF_{j-1}),
\]
we have $M' \subseteq M$ and thus $m' \mid m$.

First suppose $\alpha_1 + \beta_1 - 1 - m < m'$. By \cref{kneserorig}, we have
\[
	\lv F_{i}F_{j-1}\rv = (\alpha_1 + \beta_1 + 1)m
\]
and thus 
\[
	F_{i}F_{j-1} = F_{i}F_{j}.
\]

Now suppose $\alpha_1 + \beta_1 - 1 - m \geq m'$. If $M' \neq M$, then
\[
	\lv F_{i}F_{j-1}\rv > \lv F_{i} F_{j}\rv,
\]
which is a contradiction. Thus we must have $M' = M$ so by \cref{kneserorig}, we have
\[
	\lv F_{i}F_{j-1}\rv = (\alpha_1 + \beta_1 + 1)m
\]
and thus $F_{i}F_{j-1} = F_{i}F_{j}$. Thus, we have a contradiction so $k \geq i + j$.
\end{proof}

\begin{lemma}
\label{existsapt18lemma}
There exists flag type $T$, realizable for groups and field extensions, and a point $\mathbf{x} \in P_T$ such that $x_2 > 2x_1$ and $x_{15} > x_8 + x_7$.
\end{lemma}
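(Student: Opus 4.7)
The plan is to construct an explicit flag $\cF$ of the abelian group $G = C_2 \times C_3 \times C_3$ whose flag type $T_\cF$ admits a point in $P_{T_\cF}$ satisfying both inequalities. Letting $e_1, e_2, e_3$ denote generators of orders $2, 3, 3$, the key structural choices are: start with $v_0 = 1,\ v_1 = e_1$, so $F_1 = \la e_1\ra$ is a subgroup of order $2$ (making $T_\cF(1,1) = 1$, which removes the would-be constraint $x_2 \leq 2x_1$); arrange $v_2, \ldots, v_5$ to complete the subgroup $\la e_1, e_2\ra$; and then set $v_6 = e_3,\ v_7 = e_2 e_3,\ v_8 = e_2^2 e_3$, so that $F_8 = \la e_1, e_2\ra \cup \la e_2\ra e_3$ meets the $e_3$-coset only in $\la e_2\ra e_3$.

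The first calculation is $F_7 F_8 = \la e_1, e_2\ra \cup \la e_1, e_2\ra e_3 \cup \la e_2\ra e_3^2$, whose stabilizer is $\la e_2\ra$; Kneser's theorem applied to $|F_7| = 8$ and $|F_8| = 9$ confirms that this $15$-element product is consistent with a stabilizer of order $3$. Continuing the sequence so that $F_{14}$ equals exactly this product set gives $T_\cF(7,8) = 14$, well below $15$. The decisive remaining step is to order the leftover coset $e_1\la e_2\ra e_3^2$ as $v_{15} = e_1 e_2 e_3^2,\ v_{16} = e_1 e_3^2,\ v_{17} = e_1 e_2^2 e_3^2$; the crucial claim to verify is that no corner $(i,j)$ of $T_\cF$ has $T_\cF(i,j) = 15$. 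By \cref{cornerreadinglemma}, any such corner must satisfy $v_i v_j = v_{15}$, and the finitely many candidate pairs $(1,13), (3,12), (5,14), (6,10), (7,9), (8,11)$ can each be checked to yield $T_\cF(i,j) \in \{16, 17\}$ under this ordering.

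Once this is in place, $x_{15}$ is constrained in $P_{T_\cF}$ only through monotonicity, so I can propose the explicit point
\[
\mathbf{x} = \bigl(1,\ 3, 3, 3, 3,\ \tfrac{7}{2}, \tfrac{7}{2}, \tfrac{7}{2},\ 4, 4, 4,\ 7, 7, 7,\ \tfrac{15}{2}, \tfrac{15}{2}, \tfrac{15}{2}\bigr).
\]
The strict inequalities $x_2 = 3 > 2 = 2 x_1$ and $x_{15} = 15/2 > 7 = x_7 + x_8$ are immediate, and verifying $\mathbf{x} \in P_{T_\cF}$ reduces to checking the finitely many inequalities $x_{T_\cF(i,j)} \leq x_i + x_j$ at the corners; each becomes a direct numerical substitution, with the tightest being those at $(6,6), (6,7), (6,8), (7,7), (6,9), (6,11), (7,10), (8,9)$, which are met with equality.

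The field-extension case then follows from \cref{groupfieldlemma} applied to $K = \mathbb{Q}$ with primitive elements $\sqrt{2}, \sqrt[3]{2}, \sqrt[3]{3}$ and $L = \mathbb{Q}(\sqrt{2}, \sqrt[3]{2}, \sqrt[3]{3})$ of degree $18$. The main obstacle is the bookkeeping at the $v_{15}$ step: ensuring that the specific ordering of the final coset eliminates every would-be corner at level $15$. This is a finite combinatorial check, but it is where the factorization $18 = 2 \cdot 3 \cdot 3$ is essential, since it permits a nontrivial stabilizer $\la e_2\ra$ inside $F_7 F_8$ and thereby pushes the relevant corner constraint from $(7,8)$ down to $(6,8)$ and $(7,7)$, freeing $x_{15}$ from the monotonicity trap $x_{15} \leq x_{T_\cF(7,8)} \leq x_7 + x_8$.
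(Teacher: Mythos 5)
Your construction is correct and is essentially the paper's approach: the paper's proof of this lemma simply takes the flag of $C_2\times C_3\times C_3$ and the point $\mathbf{x}$ built in the proof of \cref{pqr}, specialized to $n=18$, and your flag is the same coset-by-coset lexicographic ordering up to a small permutation of the final blocks, with an analogous explicit point. The only real difference is presentational -- you organize the verification around \cref{cornerreadinglemma} (showing no corner takes the value $15$, with the ordering of $v_9,\dots,v_{14}$ pinned down implicitly by your candidate list), whereas the paper's \cref{pqr} checks the defining inequalities by a direct case analysis on index ranges -- and both routes check out.
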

\begin{proof}
Choose the $\mathbf{x}$ given in the proof of \cref{pqr} for $n = 18$.
\end{proof}

The following lemma is stated for flags of field extensions; the analogous statement for abelian groups is true, and has an identical proof. 
\begin{lemma}
\label{strucof18lemma}
Let $\cF$ be any flag of a degree $18$ field extension $L/K$ such that  $T_{\cF}(1,1) = 1$ and $T_{\cF}(7,8) \leq 14$.
Then there exists $\alpha,\beta,\gamma \in L$ such that 
\begin{enumerate}
\item $[K(\alpha) \colon K] = 2$;
\item $[K(\beta) \colon K] = 3$;
\item $K(\alpha, \beta, \gamma) = L$;
\item $F_1 = K(\alpha)$;
\item $F_8 = K(\beta)\la 1, \alpha, \gamma \ra$;
\item $K(\beta)F_7 = K(\beta)\la 1, \alpha, \gamma \ra$;
\item and $F_{14} = K(\beta)\la 1, \alpha, \gamma , \alpha\gamma, \gamma^2\ra $.
\end{enumerate}
\end{lemma}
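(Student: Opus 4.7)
The plan is to peel off structural information from the two hypotheses and then compute. The hypothesis $T_{\cF}(1,1) = 1$ forces $F_1 F_1 \subseteq F_1$, so $F_1$ is a $K$-subalgebra of $L$ of $K$-dimension $2$, hence a degree-$2$ subfield; I will write $F_1 = K(\alpha)$, yielding conclusions (1) and (4) at once. The hypothesis $T_{\cF}(7,8) \leq 14$ says $\dim_K F_7 F_8 \leq 15 = 7 + 8$, so \cref{overflowlemma} applies to $I = F_7$, $J = F_8$. Setting $m = \dim_K \Stab(F_7 F_8)$, the lemma gives $m > 1$, the addition $7+8$ overflows modulo $m$, and $m \mid 18$ because $\Stab(F_7 F_8)$ is a subfield of $L$.

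A direct mixed-radix check of the divisors of $18$ shows that the only values producing an overflow of $7+8$ are $m=3$ and $m=9$. I will eliminate $m = 9$ using $F_1$: the formula in \cref{overflowlemma} gives $\dim_K \Stab(F_7 F_8) F_7 = (0+1)\cdot 9 = 9$, forcing $F_7 \subseteq \Stab(F_7 F_8)$, whence the degree-$2$ subfield $F_1 \subseteq F_7$ would lie inside a degree-$9$ subfield of $L$, contradicting $2 \nmid 9$. Thus $m = 3$, and putting $K(\beta) := \Stab(F_7 F_8)$ establishes (2). The lemma then yields $\dim_K K(\beta) F_7 = \dim_K K(\beta) F_8 = 9 = \dim_K F_8$, so $F_8 = K(\beta) F_7 = K(\beta) F_8$ is $K(\beta)$-stable.

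To secure (5) and (6), I will choose $\gamma \in F_8$ extending $\{1, \alpha\}$ to a $K(\beta)$-basis of $F_8$, which has $K(\beta)$-dimension $3$. This is possible because $\alpha \notin K(\beta)$: otherwise $K(\alpha) \subseteq K(\beta)$ would force $2 \mid 3$, so $\{1,\alpha\}$ is $K(\beta)$-linearly independent. For (7), the inclusion $F_7 F_8 \subseteq F_{14}$ together with $\dim_K F_7 F_8 = 15 = \dim_K F_{14}$ gives $F_{14} = F_7 F_8$; by the $K(\beta)$-stability of $F_8$ this equals $F_8 \cdot F_8 = K(\beta)\la 1,\alpha,\gamma,\alpha^2,\alpha\gamma,\gamma^2\ra$, and the $\alpha^2$ term may be absorbed since $\alpha^2 \in K + K\alpha \subseteq K(\beta) + K(\beta)\alpha$. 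Finally (3) follows because $K(\alpha,\beta,\gamma) \subseteq L$ contains $F_{14}$, hence has $K$-dimension at least $15$; being a divisor of $18$, it must equal $18$.

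The main obstacle is the bookkeeping that restricts $m$ to $\{3,9\}$ and then eliminates $m = 9$. Once $m = 3$ is pinned down, the remaining conclusions are forced by the dimension identities in \cref{overflowlemma} together with the elementary subfield-degree constraints coming from $F_1 = K(\alpha)$.
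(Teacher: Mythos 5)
Your proof is correct and takes essentially the same route as the paper: apply \cref{overflowlemma} to the pair $(F_7,F_8)$, identify $\Stab(F_7F_8)$ as a cubic subfield $K(\beta)$, obtain $F_1=K(\alpha)$ quadratic from $T_{\cF}(1,1)=1$, complete $\{1,\alpha\}$ to a $K(\beta)$-basis of $F_8$, and compute $F_{14}=F_7F_8=K(\beta)\la 1,\alpha,\gamma,\alpha\gamma,\gamma^2\ra$. Your explicit elimination of the alternative $\dim_K\Stab(F_7F_8)=9$ (via $F_1\subseteq F_7\subseteq\Stab(F_7F_8)$ and $2\nmid 9$) merely spells out a step the paper leaves implicit in asserting $[\Stab(F_7F_8)\colon K]=3$.
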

\begin{proof}
Let $\{1=v_0,\dots,v_{n-1}\}$ be a $K$-basis of $L$ such that $F_i = K\la v_0,\dots,v_i\ra$. Let $\alpha \coloneqq v_1$. As $v_1^2 \in K\la 1,v_1 \ra$ and $\alpha \notin K$, we have that $\deg(\alpha) = 2$.

\cref{overflowlemma} implies that $[\Stab(F_7F_8) \colon K] = 3$. Let $\beta$ be any primitive element of the cubic extension $\Stab(F_7F_8)$; such an element exists because any extension of prime degree is simple. Moreover \cref{overflowlemma} implies that $K(\beta)F_8 = F_8$. Let $\gamma \in F_8 \setminus K(\alpha,\beta)$. Thus, $K(\alpha, \beta, \gamma) = L$.

Thus, $F_8 = K(\beta)\la 1, \alpha, \gamma \ra$. \cref{overflowlemma} implies that $K(\beta) F_7 = K(\beta)\la 1, \alpha, \gamma \ra$.
By \cref{overflowlemma}, $F_7 F_8 = F_{14}$ so $F_{14} = K(\beta)\la 1, \alpha, \gamma , \alpha\gamma, \gamma^2\ra$.
\end{proof}

\cornersstrongineq*
\begin{proof}
We will prove that there is such a $T$ that is minimal for field extensions; the analogous statement for abelian groups is true, and has an identical proof. The two statements combined prove the theorem.
 
By \cref{existsapt18lemma}, there exists some flag type $T$ that is realizable for field exensions and $\mathbf{x}\in P_T$ such that $x_2 > 2x_1$ and $x_{15} > x_8 + x_7$.

Let $\cF$ be a flag of a field extension $L/K$ realizing $T$; then $T_{\cF}(1,1)=1$ and $T_{\cF}(7,8) \leq 14$. 
 
\noindent Apply \cref{strucof18lemma} to obtain $\alpha,\beta,\gamma \in L$ such that 
\begin{enumerate}
\item $[K(\alpha) \colon K] = 2$;
\item $[K(\beta) \colon K] = 3$;
\item $K(\alpha, \beta, \gamma) = L$;
\item $F_1 = K(\alpha)$;
\item $F_8 = K(\beta)\la 1, \alpha, \gamma \ra$;
\item $K(\beta)F_7 = K(\beta)\la 1, \alpha, \gamma \ra$;
\item and $F_{14} = K(\beta)\la 1, \alpha, \gamma , \alpha\gamma, \gamma^2\ra $.
\end{enumerate}

We will prove that there exists $j \leq 12$ and $k \geq 15$ such that $T(1,j) = k$. Suppose for the sake of contradiction that the claim is false. Then
\[
	K(\alpha)F_{12} \subseteq F_{14} = K(\beta)\la 1,\alpha,\gamma,\alpha\gamma,\gamma^2\ra.
\]
\noindent Write $\alpha^{-1} = b\alpha + c$ for $b,c \in K$ and $b \neq 0$. Then, 
\begin{align*}
F_{12} &\subseteq K(\beta)\la 1,\alpha,\gamma,\alpha\gamma,\gamma^2\ra \cap \alpha^{-1}K(\beta)\la 1,\alpha,\gamma,\alpha\gamma,\gamma^2\ra \\
&= K(\beta)\la 1,\alpha,\gamma,\alpha\gamma,\gamma^2\ra \cap K(\beta)\la \alpha^{-1},1,\alpha^{-1}\gamma,\gamma,\alpha^{-1}\gamma^2\ra \\
&= K(\beta)\la 1,\alpha,\gamma,\alpha\gamma,\gamma^2\ra \cap K(\beta)\la b\alpha+c,1,b\alpha\gamma + c\gamma,\gamma,b\alpha\gamma^2 + c\gamma^2\ra \\
&= K(\beta)\la 1,\alpha,\gamma,\alpha\gamma,\gamma^2\ra \cap K(\beta)\la 1, \alpha,\gamma,\alpha\gamma, b\alpha\gamma^2 + c\gamma^2\ra \\
&= K(\beta)\la 1,\alpha,\gamma,\alpha\gamma \ra,
\end{align*}
but $\dim_{K}F_{12} = 13$ and $\dim_{K}K(\beta)\la 1,\alpha,\gamma,\alpha\gamma \ra = 12$, which is a contradiction.
\end{proof}

\begin{bibdiv}
\begin{biblist}

\bib{kneser}{article}{
	author={Kneser, Martin},
	title = {Absch{\"a}tzungen der asymptotischen Dichte von Summenmengen},
	journal = {Matematika},
	year = {1961},
	volume = {5},
	issue = {3},
	pages = {17--44},
}

\bib{zemor}{article}{
	author={Bachoc, Christine},
	author={Serra, Oriole},
	author={Z\'emor, Gilles}
	title = {Revisiting Kneser's theorem for field extensions},
	journal = {Combinatorica},
	year = {2018},
	volume = {38},
	number={4},
	pages = {759--777},
}

\end{biblist}
\end{bibdiv}

\end{document}